\documentclass{amsart}

\usepackage{float}
\usepackage{hyperref}   
\usepackage{graphicx}   
\usepackage{amssymb,xcolor}
\usepackage{pdfpages}
\usepackage{textcomp}
\usepackage{amsmath}
\usepackage{amsthm}
\usepackage{mathrsfs}
\usepackage{esint}
\usepackage{scalerel}
\usepackage{stackengine}
\usepackage{import}
\usepackage{xifthen}
\usepackage{pdfpages}
\usepackage{transparent}
\usepackage{hyperref}
\usepackage{subcaption}
\usepackage{caption}


\theoremstyle{plain}

\theoremstyle{plain}
\newtheorem{theorem}{Theorem}[section]
\theoremstyle{plain}
\newtheorem{definition}[theorem]{Definition}
\theoremstyle{plain}
\newtheorem{lemma}[theorem]{Lemma}

\theoremstyle{remark}
\newtheorem{remark}[theorem]{Remark}
\theoremstyle{plain}
\newtheorem{proposition}[theorem]{Proposition}
\numberwithin{equation}{section}
\theoremstyle{plain}
\newtheorem{example}[theorem]{Example}
\newtheorem{conjecture}[theorem]{Conjecture}

\newcommand{%
	\begingroup
	\fontsize{9pt}{12pt}
	\def\svgwidth{0.8\columnwidth}
	\import{./figures/}{.pdf_tex}
	\endgroup
}[2][0.8]{%
	\begingroup
	\fontsize{9pt}{12pt}
	\def\svgwidth{#1\columnwidth}
	\import{./figures/}{#2.pdf_tex}
	\endgroup
}

\makeatletter
\DeclareRobustCommand{\thmprime}{%
  \begingroup
  \expandafter\in@\expandafter b\expandafter{\f@series}%
  \ifin@ \boldmath \fi
  $\m@th{}^{\prime}$%
  \endgroup
}
\makeatother


\stackMath
\newsavebox\tmpbox
\newcommand\widefrown[1]{\ThisStyle{%
\sbox\tmpbox{$\SavedStyle#1$}%
\stackon[0pt]{\usebox{\tmpbox}}{%
\stretchto{%
  \scaleto{%
    \scalerel*[\wd\tmpbox]{\mkern-.8mu\frown\mkern-.8mu}%
    {\rule[-\textheight/2]{1ex}{\textheight}}%
  }{\textheight}%
}{0.8ex}}%
}}

\begin{document}

\title{dihedral rigidity in hyperbolic 3-space}

\author{Xiaoxiang Chai}
\address{Korea Institute for Advanced Study, Seoul 02455, South Korea}
\email{xxchai@kias.re.kr}

\author{Gaoming Wang}
\address{The Chinese University of Hong Kong, Shatin, New Territories, Hong Kong}

\email{gmwang@math.cuhk.edu.hk}

\begin{abstract}
  We prove a comparison theorem for certain types of polyhedra in a 3-manifold
  with its scalar curvature bounded below by $-6$. The result
  confirms in some cases the Gromov dihedral rigidity conjecture in hyperbolic
  $3$-space.
\end{abstract}

\keywords{Dihedral rigidity, hyperbolic space, polyhedron, tetrahedron, mass functional,
constant mean curvature, contact angle, capillary, foliation.}
\subjclass{53C12, 53C21, 53C23, 53C24.}

{\maketitle}

\section{Introduction}
The upper half space model of hyperbolic 3-space $\mathbb{H}^3$ is given by
the metric
\begin{equation}
  b = \tfrac{1}{(x^1)^2} \delta = \tfrac{1}{(x^1)^2} ((\mathrm{d} x^1)^2 +
  (\mathrm{d} x^2)^2 + (\mathrm{d} x^3)^2), \label{metric}
\end{equation}
where $(x^1, x^2, x^3) \in \mathbb{R}^3_+ = \{x \in \mathbb{R}^3 : x^1 > 0\}$.
We represent $\mathbb{H}^3$ as $(\mathbb{R}^3_+, b)$ to emphasize the upper half space model if
needed.

For any vector $\vec{a} = (a^1, a^2, a^3) \in \mathbb{R}^3$ of unit Euclidean
length and any real number $s$, the linear planes (if non-empty)
\begin{equation}
  Z (\vec{a}, s) = \{x \in \mathbb{R}^3 : x^1 > 0, \sum_i a^i x^i =s\}\label{Z}
\end{equation}
give totally umbilic surfaces of $\mathbb{H}^3$ (see Lemma \ref{umbilicity of
Z}). If $a^1 = \pm 1$, $Z (\vec{a}, s)$ is a horosphere; when $a^1 = 0$, $Z
(\vec{a}, s)$ is totally geodesic; and $0 < |a^1 | < 1$, $Z (\vec{a}, s)$ is
called an equidistant surface. Given any two different $(\vec{a}_1, s_1)$ and
$(\vec{a}_2, s_2)$, the two linear planes $Z (\vec{a}_1, s_1)$ and $Z
(\vec{a}_2, s_2)$ intersect at a constant angle if their intersection is
nonempty since $b$ is conformal to the Euclidean metric.

We consider polyhedra in the hyperbolic 3-space enclosed by some $Z (\vec{a},
s)$. In hyperbolic geometry, a hyperbolic polyhedron is
usually enclosed by totally geodesic surfaces. It turns out that in scalar
curvature geometry, the polyhedron enclosed by $Z (\vec{a}, s)$ seems more
natural.

First, we fix some conventions. Let $M$ be a manifold with boundary (and corners) diffeomorphic to some polyhedron $\bar{P}\subset \mathbb{R}^3$. Let $\Sigma$ be a surface in $M$, and $N$ be a chosen unit normal of the surface $\Sigma$, the second fundamental form in the direction of $N$ is given by $A(X,Y)=\langle \nabla_{X}N, Y\rangle$. The mean curvature of $\Sigma$ in the direction of $N$ is just the trace of $A$. In this convention, the horosphere $\{x^1=1\}$ has mean curvature $-2$ in the direction of the unit normal $x^1\tfrac{\partial}{\partial x^1}$ in the hyperbolic space. Let $F_i$ and $F_j$ be two neighboring faces of $M$, $X_i$ and $X_j$ are the corresponding unit normal pointing to the outside of $M$, the \text{{\itshape{dihedral angle}}} $\measuredangle_{ij}M$ of two neighboring faces  $F_i$ and $F_j$ is define to be
\begin{equation}
\cos\measuredangle_{ij}M = -\langle X_i, X_j \rangle.\label{eq:dihedral angle}
\end{equation}
In other words, the dihedral angle is the complementary angle of the angle formed by the outward unit normals. We also use the notation $\measuredangle F_iF_j$.

Motivated by the result regarding the hyperbolic mass in \cite{jang-hyperbolic-2021}, Appendix \ref{eval}, and the dihedral rigidity conjectures \cite{gromov-dirac-2014}, \cite{li-polyhedron-2020}, \cite{li-dihedral-2020} and \cite{li-dihedral-2020-1},
we conjecture the following.

\begin{conjecture}
  \label{gromov conjecture hyperbolic}Let $\bar{P}$ be a hyperbolic reference polyhedron, that is, $\bar{P}$ is enclosed by some
  $Z (\vec{a}, s)$ in the hyperbolic $3$-space $(\mathbb{R}_+^3, b)$. Let the faces of $\bar{P}$ be
  faces $\bar{F}_i$. Assume that $\bar{P}$ is convex in $(\mathbb{R}^3_+, \delta)$, let
  $(M^3, g)$ be a Riemannian manifold diffeomorphic to $\bar{P}$, if
  \begin{enumerate}
    \item the scalar curvature of $M$ satisfies $R_g \geq - 6$;
    
    \item the mean curvatures of the faces of $M$ are no less than the mean
    curvatures of the corresponding faces of $\bar{P}$;
    
    \item and the dihedral angles along the edges of $M$ are no greater than
    the dihedral angles along the edges of $\bar{P}$ i.e. $\measuredangle_{ij}M \leq \measuredangle_{ij} \bar{P}$,
  \end{enumerate}
  then $M$ is isometric to $\bar{P}$. Here, the mean curvature is computed in the direction of the unit normal pointing outward of $M$ or $\bar{P}$. 
\end{conjecture}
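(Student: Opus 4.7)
The plan is to follow the capillary stable CMC surface strategy pioneered by Chao Li for the Euclidean dihedral rigidity problem, adapted to handle the hyperbolic scalar curvature lower bound $R_g \geq -6$ and the fact that the reference faces $\bar F_i$ are umbilic but typically non-minimal. In essence, I would look for a stable capillary surface $\Sigma \subset M$ with $\partial \Sigma$ distributed along the faces $F_i$, meeting each $F_i$ at a prescribed contact angle dictated by a model foliation of $\bar P$, and use the second variation combined with Gauss--Bonnet to convert the three curvature/angle hypotheses into an equality.

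Concretely, I would first identify on the reference side a foliation of $\bar P$ by pieces of totally umbilic surfaces $Z(\vec a, s)$ (sliding the parameter $s$), read off from it the target contact angles $\theta_i$ with each face $\bar F_i$ and the model mean curvature $H_0$ of the leaves, and then set up on $M$ a capillary functional of the form
\begin{equation*}
  \mathcal{F}(\Omega) = \operatorname{Area}(\partial^* \Omega \cap \operatorname{int} M) - \sum_i \cos\theta_i \, \operatorname{Area}(\partial^* \Omega \cap F_i) - \int_\Omega H_0 \, dV
\end{equation*}
on Caccioppoli subsets $\Omega \subset M$ with suitable topological constraint (e.g.\ separating two opposite faces). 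A minimizer $\Sigma = \partial^* \Omega \cap \operatorname{int} M$ exists and, with the convexity hypothesis on $\bar P$, is regular up to and including the edges of $M$ where it terminates; the first variation forces $\Sigma$ to have mean curvature $H_0$ in $M$ and to meet each $F_i$ at the prescribed angle $\theta_i$.

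The heart of the argument is the second variation. For a normal variation $\varphi$, standard capillary calculations combined with the Gauss equation on $\Sigma$ yield an inequality of the schematic form
\begin{equation*}
  \int_\Sigma \bigl(|\nabla \varphi|^2 - (\tfrac{1}{2}R_g + \tfrac{1}{2}|\mathring A_\Sigma|^2 + \tfrac{1}{2}H_0^2 - K_\Sigma) \varphi^2 \bigr)\, dA \;\geq\; \sum_j \oint_{\partial_j \Sigma} \bigl(\cot\theta_j \, H_{F_j} - \kappa^{F_j}_\Sigma\bigr) \varphi^2 \, ds.
\end{equation*}
Plugging in $\varphi \equiv 1$, applying Gauss--Bonnet to $\Sigma$ (whose geodesic curvature pieces along $\partial_j \Sigma$ are converted to $\kappa^{F_j}_\Sigma$ via the contact angle, with corner contributions coming from the dihedral angles of $M$), and using in turn $R_g \geq -6$ to dominate $-3 \cdot 2 = -H_0^2$ in the model case, $H_{F_i} \geq H_{\bar F_i}$ for the boundary mean curvature terms, and $\measuredangle_{ij}M \leq \measuredangle_{ij}\bar P$ for the corner terms, forces every inequality to be an equality. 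Equality then yields $|\mathring A_\Sigma| = 0$, $H_\Sigma = H_0$, each $F_i$ umbilic with the model mean curvature, and each dihedral angle matching that of $\bar P$.

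The final step is to upgrade this pointwise rigidity of one leaf into a global isometry. By varying the separation constraint one obtains a one-parameter family of such $\Sigma_t$ sweeping out $M$; standard arguments (constancy of the lapse from the equality case of the stability inequality applied to the lapse function, together with the umbilicity of faces) identify $(M,g)$ with $(\bar P, b)$ explicitly via the upper half-space model. The principal obstacle I expect is twofold: first, establishing regularity of the capillary minimizer at the edges of $M$ under only the dihedral inequality, not equality, which is exactly the delicate point in Li's work and presumably governs which ``certain types of polyhedra'' the authors can actually handle; and second, arranging that the model angles $\theta_i$ and model mean curvature $H_0$ simultaneously absorb all three boundary terms with the correct sign, which is nontrivial because the reference leaves $Z(\vec a, s)$ are not minimal and the Gauss--Bonnet bookkeeping mixes geodesic curvature of $\partial \Sigma$ with second fundamental forms of the ambient faces.
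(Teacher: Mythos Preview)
The statement you are attempting to prove is a \emph{conjecture}; the paper does not prove it in this generality. What the paper actually establishes are Theorems~1.4 and~1.6, which are restricted to tetrahedra and prisms whose base face lies on a horosphere, subject to the extra dihedral condition \eqref{extra dihedral angle condition} and, for rigidity, either the cylinder trapping condition or $\bar\gamma_j\le\pi/2$. Your outline is essentially the method the paper uses for those special cases, so the approach is correct in spirit, but several of the steps you sketch do not go through for arbitrary reference polyhedra, and the paper is explicit about where they break.

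Concretely: (i) Your choice of model foliation and the value $H_0$ is only made to work when the base lies on a horosphere, so that $H_0=-2$ and the volume term is $-2\mathcal H^3(E)$; for a general $\bar P$ enclosed by arbitrary $Z(\vec a,s)$ it is not clear which foliation to use. (ii) The corner regularity of the capillary minimiser requires the angle hypothesis $|\pi-(\bar\gamma_j+\bar\gamma_{j+1})|<\measuredangle F_jF_{j+1}$, which is an extra assumption, not a consequence of the conjecture's hypotheses. (iii) For cone type polyhedra with $k>3$ side faces the argument stalls at Conjecture~\ref{conj_rig_pyramid}: one must show that the infimum of the reduced energy over all base directions is negative when the tangent cone has strictly smaller dihedral angles, and Example~\ref{tetrahedron example} shows that two non-isometric $4$-faced cones can share all dihedral angles, so the tangent-cone step is genuinely open. (iv) In the rigidity step, the ODE comparison $-H'(\rho)\ge -C(\rho)(H(\rho)+2)$ only yields the desired monotonicity if $C(\rho)=\sum_j\cot\bar\gamma_j\,|\partial\Sigma_\rho\cap F_j|\ge 0$, which is where the cylinder trapping condition or $\bar\gamma_j\le\pi/2$ enters; without it the foliation argument does not close. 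You correctly flag regularity and the Gauss--Bonnet bookkeeping as obstacles, but (iii) and (iv) are the sharper reasons the full conjecture remains open.
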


It is easy to make the conjecture in other dimensions. Gromov \cite{gromov-dirac-2014} proposed the dihedral rigidity conjectures as an attempt to understand scalar curvature with a lower bound for non-Riemannian metric spaces.
As triangle comparison theorems are of great importance in the study of sectional curvature for non-Riemannian metric spaces, Gromov suggested that the convex polyhedron in the Euclidean space could play a similar role in the study of nonnegative scalar curvature. So in \cite{gromov-dirac-2014} he proposed the Euclidean version of Conjecture \ref{gromov conjecture hyperbolic} which was confirmed by Li \cite{li-polyhedron-2020}, \cite{li-dihedral-2020} in some cases. Recently there was a spinorial proof \cite{wang-gromovs-2022}. 

Gromov also made the conjecture for parabolic cubes in \cite{gromov-dirac-2014}, and it was naturally important in the study of scalar curvature with a negative lower bound.
Li \cite{li-dihedral-2020-1} confirmed Gromov dihedral rigidity conjecture for parabolic cubes in dimensions up to seven.

Another motivation for the dihedral rigidity comes from the positive mass theorem. The positive mass theorem for asymptotically flat manifolds says that if the scalar curvature of the manifold is nonnegative, then the ADM mass is nonnegative (see \cite{schoen-proof-1979}). Lohkamp \cite{lohkamp-scalar-1999} argued that if the ADM mass is negative, then he can find a new metric on the asymptotically flat manifold preserving the nonnegativity of the scalar curvature. And the new metric has the extra property that the scalar curvature is positive somewhere and flat outside a big compact set. By identifying the opposite edges of a big cube, he reduced the positive mass theorem to the non-existence of positive scalar curvature metrics on the torus.

 Instead of chopping away the region outside the cubes, we chose to chop away the region outside a convex polyhedron of the Euclidean space and now we can view the dihedral rigidity for the remaining region as a localization of the positive mass theorem. In particular, see the evaluation results \cite{miao-measuring-2020,miao-mass-2021} which localize the ADM mass on polyhedra.

For the hyperbolic case, following the same philosophy as the Euclidean case, given a manifold which is hyperbolic outside a compact set, we chop away the infinity along the boundaries made up by pieces of $Z(\vec{a},s)$. We can study the dihedral rigidity for the remaining region. In Conjecture \ref{gromov conjecture hyperbolic}, we restate Gromov dihedral rigidity conjecture in the hyperbolic space to include more general polyhedra. As mentioned earlier, our motivation partly comes from the localization of the hyperbolic mass of Jang-Miao \cite{jang-hyperbolic-2021} and Appendix \ref{eval}, and the relevant hyperbolic positive mass theorems which can be found for example in \cite{wang-mass-2001,chrusciel-mass-2003,andersson-rigidity-2008}.

We are able to settle Conjecture \ref{gromov conjecture hyperbolic} for some special polyhedra,
and our result is an analog of Li \ {\cite{li-polyhedron-2020}} who showed the
dihedral rigidity for some polyhedra in the Euclidean 3-space.

Now we define two types of polyhedra. Let $k \geq 3$ be an integer, $\vec{a}$ be a constant vector in $\mathbb{R}^3$ of unit Euclidean length and $s_1$, $s_2$ be two numbers with $0<s_1<s_2$.

\begin{definition}
  \label{cone}In $\mathbb{R}^3$, let $\bar{B} \subset \{x : \vec{a} \cdot x = s_1
  \}$ be a convex $k$-polygon and $\bar{p} \in \{x : x \cdot \vec{a} = s_2 \}$ be a
  point. We call the set
  \begin{equation*}
    \bar{P} := \{t \bar{p} + (1 - t) x : t \in [0, 1], x \in \bar{B}\}
  \end{equation*}
  a $(\bar{B}, \bar{p})$-cone, $\bar{B}$ the base face and all the other faces side faces. We
  call a Riemannian manifold $(M, g)$ a cone type polyhedron if it is
  diffeomorphic to a $(\bar{B}, \bar{p})$-cone.
\end{definition}

\begin{definition}
  \label{prism}In $\mathbb{R}^3$, let $\bar{B}_1 \subset \{x : \vec{a} \cdot x = s_1
  \}$ and $\bar{B}_2 \subset \{x : \vec{a} \cdot x = s_2 \}$ be two convex
  $k$-polygons whose corresponding edges are parallel.
  We call the set
  \begin{equation*}
    P := \{t p + (1 - t) q : t \in [0, 1], p \in \bar{B}_1, q \in \bar{B}_2 \}
  \end{equation*}
  a $(\bar{B}_1, \bar{B}_2)$-prism, $\bar{B}_1$ the bottom face, $\bar{B}_2$ the top face and all
  other faces side faces. We call a Riemannian manifold $(M, g)$ a prism 
  polyhedron if it is diffeomorphic to a $(\bar{B}_1, \bar{B}_2)$-prism.
\end{definition}

\begin{figure}[H]
    \centering
	\begingroup
	\fontsize{9pt}{12pt}
	\def\svgwidth{0.8\columnwidth}
	\import{./figures/}{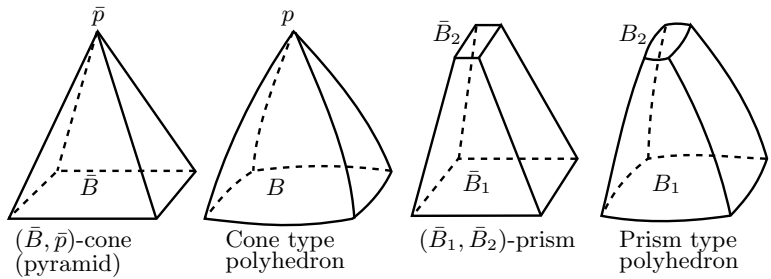}
	\endgroup

    \caption{Two types of polyhedrons}
    \label{fig:two-types-of-polyhedrons}
\end{figure}

We call $\bar{P}$ with the metric \eqref{metric} in Definitions \ref{cone} and
\ref{prism} a \text{{\itshape{reference polyhedron}}} or
\text{{\itshape{hyperbolic reference polyhedron}}}, that is, $\bar{P}$ is enclosed by
some $Z (\vec{a}, s)$ in $(\mathbb{R}^3_+, b)$.
We add a bar for every geometric quantity related to reference polyhedra.
Sometimes we need to study $\bar{P}$
with the Euclidean metric, to emphasize the Euclidean background, we call $\bar{P}$
a \text{{\itshape{flat reference}}}. 
We have mentioned the parabolic cube in the work of \cite{gromov-dirac-2014,li-dihedral-2020-1}, and a parabolic cube is a cube with two base faces lying on the horospheres.

We only consider such the hyperbolic reference polyhedron $\bar{P}$ whose base face $\bar{B}$ lies on
the horosphere $\{x^1 = s_1 \}$ and the other base face or the vertex of the
polyhedron lies on the horosphere $\{x^1 = s_2 \}$ with $s_2 > s_1 > 0$.
Also, due to technical issues (see Conjecture \ref{conj_rig_pyramid}), we only treat the tetrahedron case among all cone type polyhedra. 

\begin{theorem}
  \label{dihedral non-rigidity}Let $(M^3, g)$ be a Riemannian polyhedron with
  side faces $F_1$, $\cdots$, $F_k$, assume that $(M, g)$ is diffeomorphic to
  $\bar{P} \subset (\mathbb{R}^3_+, b)$ which is a tetrahedron or prism with side faces $\bar{F}_1$, $\cdots$, $\bar{F}_k$. Denote $\bar{\gamma }_j$ the angle between
  $\bar{F}_j$ and the base face of $\bar{P}$ (if $\bar{P}$ is a prism, fix one base face with
  the smaller $x^1$ coordinate). Assume that everywhere along $F_j \cap F_{j +
  1}$,
  \begin{equation}\label{extra dihedral angle condition}
    | \pi - (\bar{\gamma }_j + \bar{\gamma }_{j + 1}) | < \measuredangle F_j F_{j + 1},
  \end{equation}
  and the base face of $\bar{P}$ lies on one of the horospheres, and the scalar
  curvature of $M$ is greater than $- 6$ and the mean curvature $H$ of each
  face of $M$ are greater than those of $\bar{P}$ in $\mathbb{H}^3$, then the
  dihedral angles cannot be everywhere strictly less than those of $\bar{P}$.
\end{theorem}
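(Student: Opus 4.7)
The plan is to argue by contradiction. Assume all the stated hypotheses hold and that, in addition, the dihedral angle of $M$ is strictly less than that of $\bar P$ along every edge. I will produce a competitor surface with strictly negative ``hyperbolic mass'' and then show that the minimizer of that mass is stable with non-negative mass, a contradiction.

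Modeled on Li's Euclidean argument \cite{li-polyhedron-2020} but adapted to the hyperbolic/horosphere background (cf.\ the hyperbolic mass calculations in Appendix~\ref{eval} and Jang--Miao \cite{jang-hyperbolic-2021}), I would consider the capillary functional
\begin{equation*}
  \mathcal F(\Sigma) \;=\; |\Sigma|_g \;-\; \sum_{j=1}^k \cos(\bar\gamma_j)\,|T_j|_g \;-\; 2\,\mathrm{Vol}(\Omega_\Sigma)
\end{equation*}
over hypersurfaces $\Sigma\subset M$ which, together with truncations $T_j\subset F_j$ of the side faces, bound a region $\Omega_\Sigma$ containing the base face $F_0$. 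The first variation of $\mathcal F$ shows that critical points are CMC surfaces with $H_\Sigma=-2$ meeting each $F_j$ at the contact angle $\bar\gamma_j$ --- the hyperbolic analog of a capillary minimal surface, the $-2$ reflecting the horosphere mean curvature in the conventions of the paper.

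Next, I would evaluate $\mathcal F$ on the base face $F_0$ itself and use the strict dihedral deficit $\measuredangle_{0j}M<\bar\gamma_j$ to deform $F_0$ slightly into $M$ and obtain a competitor with $\mathcal F<0$: the gain in the $-\cos(\bar\gamma_j)|T_j|$ terms dominates the first-order cost of the area and volume terms precisely because every contact angle along $\partial F_0$ opens up strictly. I then minimize $\mathcal F$ over finite-perimeter separating sets. Existence reduces to a compactness statement near the edges, and the essential role of hypothesis \eqref{extra dihedral angle condition} is exactly to supply it: it is the convex capillary corner condition, which prevents a minimizing sequence from collapsing onto an edge $F_j\cap F_{j+1}$. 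Standard capillary regularity theory then gives that the minimizer $\Sigma$ is smooth in the interior and meets each $F_j$ smoothly with contact angle $\bar\gamma_j$.

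Finally --- and this is the main obstacle --- one must show $\mathcal F(\Sigma)\geq 0$. Stability of $\Sigma$ against volume-preserving capillary variations, combined with the Gauss equation on $\Sigma$, yields a pointwise inequality linking the Gauss curvature of $\Sigma$ to $R_g+6$ and to $|A_\Sigma|^2$. Integrating via Gauss--Bonnet on $\Sigma$ with boundary, and substituting the capillary identity that expresses the geodesic curvature of $\partial\Sigma$ in terms of the mean curvature of $F_j$ and of $\bar\gamma_j$, the hypotheses $R_g\geq -6$ and $H_{F_j}\geq H_{\bar F_j}$ convert the Gauss--Bonnet inequality into $\mathcal F(\Sigma)\geq 0$. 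The topology of $\Sigma$ differs in the two cases of the theorem (a disk for tetrahedra, an annulus for prisms), so the Gauss--Bonnet bookkeeping splits into two sub-cases, in each of which \eqref{extra dihedral angle condition} reappears as the sign condition that absorbs the corner contributions of $\partial\Sigma$ at the side edges. Once $\mathcal F(\Sigma)\geq 0$ is established, it contradicts the negative-energy competitor produced from the deformed base face, and the proof is complete.
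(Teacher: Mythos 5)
Your overall framework (the functional \eqref{action}, first variation giving CMC $-2$ capillary surfaces meeting $F_j$ at angle $\bar\gamma_j$, then stability plus the Gauss equation plus Gauss--Bonnet) is the paper's framework, but both pillars of your intended contradiction have genuine gaps. First, the negative-energy competitor obtained by ``deforming the base face slightly into $M$'' does not exist: near the base face the functional is approximately the area of that face, which is strictly positive, and a small deformation changes it only to first order. For the prism case the infimum of \eqref{action} need not be negative at all (in the model prism every leaf of the horospherical foliation has energy $|\bar B_1|_b>0$), and no negative competitor is needed there, since the admissible class already forces a nontrivial minimizer. Negativity of the infimum is needed only in the cone case, to rule out the minimizer degenerating to the apex, and it is produced near the \emph{vertex}, not the base: after blow-up the energy localizes to the flat tangent-cone functional \eqref{functional in flat background}, and proving that this can be made negative when the dihedral angles are (strictly) smaller is exactly the delicate content of Propositions \ref{prop_energy_est} and \ref{prop_pyramid_another_condition} (Appendix \ref{proof of minimiser in general tetrahedron}); this step, which is the reason the theorem is restricted to tetrahedra among cone types, is absent from your proposal.

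Second, the claim that stability yields $\mathcal F(\Sigma)\ge 0$ for the minimizer is unsubstantiated and is not how the contradiction is reached: the second variation controls curvature quantities, not the value of the functional. The actual argument inserts the (non--volume-preserving) test function $f\equiv 1$ into the stability inequality, rewrites $|A|^2+\operatorname{Ric}(N)=\tfrac12(R_g+6)-K_\Sigma+\tfrac12|A^0|^2$, and applies Gauss--Bonnet with corners; the crux is the corner-angle comparison of \cite[Lemma 3.2]{li-polyhedron-2020}: since the contact angles are exactly $\bar\gamma_j$ while the dihedral angles of $M$ are assumed \emph{strictly} smaller than those of $\bar P$, the interior angles of $\Sigma$ at its corners satisfy $\alpha_j<\bar\alpha_j$, hence $\sum_j(\pi-\alpha_j)>2\pi\ge 2\pi\chi(\Sigma)$, and combining with the boundary identity $A_{\partial M}(\tilde\nu,\tilde\nu)-\cos\bar\gamma_j A(\nu,\nu)+\kappa\sin\bar\gamma_j=2\cos\bar\gamma_j+H_{F_j}$ gives a strict inequality incompatible with $R_g\ge-6$ and $H_{F_j}\ge-2\cos\bar\gamma_j$. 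Your proposal never invokes this corner comparison, which is where the strict dihedral hypothesis enters; instead you assign to \eqref{extra dihedral angle condition} a role in Gauss--Bonnet that it does not play (in the paper it enters the existence/regularity Theorem \ref{thm_regularity}, as you correctly note, and the cone-energy analysis of Proposition \ref{prop_pyramid_another_condition}). Also, $\Sigma$ is a $k$-gon disk in both cases (the argument only uses $\chi(\Sigma)\le 1$), not an annulus for prisms.
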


To state the rigidity part, we introduce the cylinder trapping condition.

\begin{definition}[Cylinder trapping condition]
  \label{cylinder condition}We say that flat reference $\bar{P}$ is a cone or prism type polyhedron
  satisfies the cylinder trapping condition if there exists an infinite solid cylinder
  $\bar{\mathcal{C}}$ such that $\bar{P} \subset \bar{\mathcal{C}}$, the vertices of one base
  face $\bar{B}$ lie in the edges of $\bar{\mathcal{C}}$, $\partial \bar{B} \subset \partial
  \bar{\mathcal{C}}$, and in the case of that $\bar{P}$ is of prism type at least one edge of the other face lies strictly in $\bar{\mathcal{C}}$.
\end{definition}

\begin{figure}[H]
    \centering
	\begingroup
	\fontsize{9pt}{12pt}
	\def\svgwidth{0.8\columnwidth}
\begingroup%
  \makeatletter%
  \providecommand\color[2][]{%
    \errmessage{(Inkscape) Color is used for the text in Inkscape, but the package 'color.sty' is not loaded}%
    \renewcommand\color[2][]{}%
  }%
  \providecommand\transparent[1]{%
    \errmessage{(Inkscape) Transparency is used (non-zero) for the text in Inkscape, but the package 'transparent.sty' is not loaded}%
    \renewcommand\transparent[1]{}%
  }%
  \providecommand\rotatebox[2]{#2}%
  \newcommand*\fsize{\dimexpr\f@size pt\relax}%
  \newcommand*\lineheight[1]{\fontsize{\fsize}{#1\fsize}\selectfont}%
  \ifx\svgwidth\undefined%
    \setlength{\unitlength}{680.31496063bp}%
    \ifx\svgscale\undefined%
      \relax%
    \else%
      \setlength{\unitlength}{\unitlength * \real{\svgscale}}%
    \fi%
  \else%
    \setlength{\unitlength}{\svgwidth}%
  \fi%
  \global\let\svgwidth\undefined%
  \global\let\svgscale\undefined%
  \makeatother%
  \begin{picture}(1,0.33333333)%
    \lineheight{1}%
    \setlength\tabcolsep{0pt}%
    \put(0,0){\includegraphics[width=\unitlength,page=1]{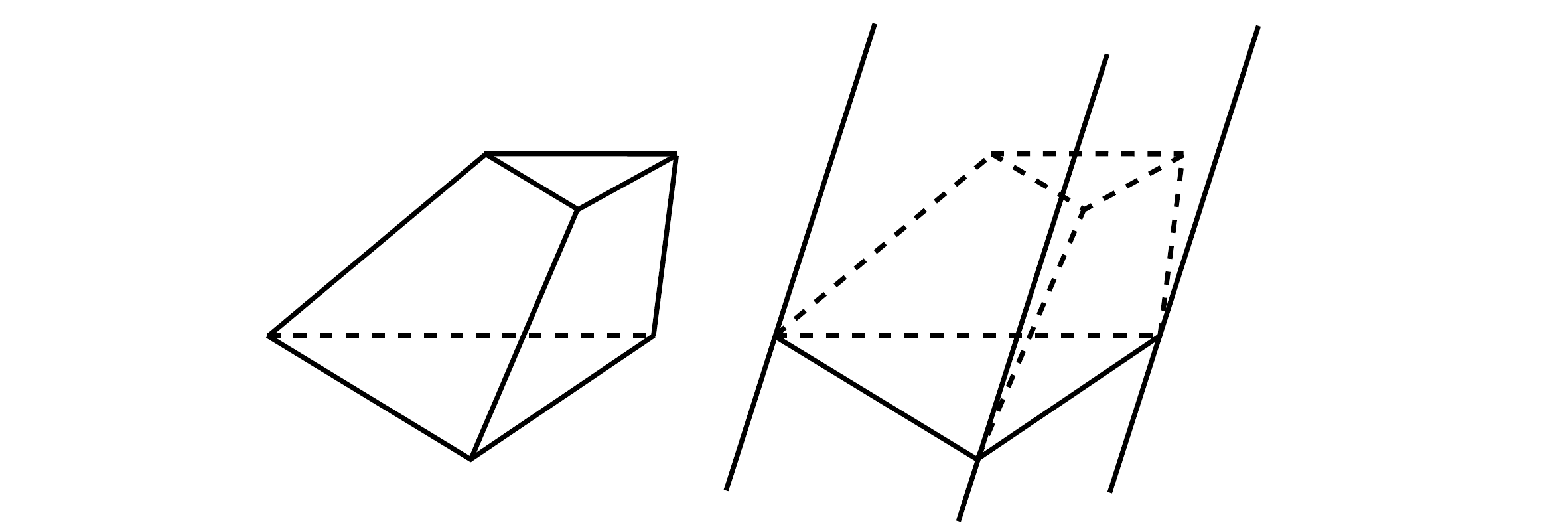}}%
    \put(0.31458761,0.25020549){\color[rgb]{0,0,0}\makebox(0,0)[lt]{\lineheight{0}\smash{\begin{tabular}[t]{l}A prism $\bar{P}$\end{tabular}}}}%
    \put(0.56238802,0.29495239){\color[rgb]{0,0,0}\makebox(0,0)[lt]{\lineheight{0}\smash{\begin{tabular}[t]{l}$\bar{\mathcal{C}}$\end{tabular}}}}%
    \put(0.48063045,0.03632562){\color[rgb]{0,0,0}\makebox(0,0)[lt]{\lineheight{0}\smash{\begin{tabular}[t]{l}$\bar{P}\subset \bar{\mathcal{C}}$\end{tabular}}}}%
    \put(0,0){\includegraphics[width=\unitlength,page=2]{fig-cylinder-trapping.pdf}}%
  \end{picture}%
\endgroup%

	\endgroup

    \caption{A prism satisfying the cylinder trapping condition.}
    \label{fig:fig-cylinder-trapping}
\end{figure}
See Figure \ref{fig:fig-cylinder-trapping} for a flat prism type polyhedron satisfying the cylinder trapping condition.
We then have the rigidity statement.

\begin{theorem}
  \label{rigidity}Under the same assumptions of Theorem \ref{dihedral
  non-rigidity} and the extra assumptions that the reference polyhedron $\bar{P}$
  satisfies the cylinder trapping condition or
  \begin{equation}
  \bar{\gamma }_j\leq \tfrac{\pi}{2},\text{ for all } j=1,2,\cdots,k \label{li foliation restriction}
  \end{equation}
  we have the rigidity statement. Namely, if $R_g \geq - 6$, the mean
  curvature of each face of $M$ is no less than those $\bar{P}$ and the dihedral
  angles $\measuredangle_{i j} M \leq \measuredangle_{i j} \bar{P}$, then $(M,
  g)$ is isometric to a polyhedron in hyperbolic 3-space.
\end{theorem}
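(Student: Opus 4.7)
My plan is to prove rigidity by constructing a foliation of $(M,g)$ by capillary horospheres that mimics the horospherical foliation of the reference polyhedron $\bar P$. In $\bar P$, one base face lies on the horosphere $\{x^1 = s_1\}$ and intersecting $\bar P$ with $\{x^1 = t\}$ for $t \in [s_1, s_2]$ yields a one-parameter family of totally umbilic leaves, each with hyperbolic mean curvature $-2$ and meeting the side face $\bar F_j$ at the fixed angle $\bar \gamma_j$. The scheme is to reproduce this foliation inside $M$ directly from the scalar-curvature, mean-curvature and dihedral-angle inequalities, and then read off the metric.

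The central step is to solve a capillary constant-mean-curvature problem inside $(M,g)$ by minimising the functional
\[
\mathcal{F}(\Omega) = \operatorname{Area}(\partial^* \Omega \cap \operatorname{int} M) - \sum_j \cos \bar{\gamma}_j\, \operatorname{Area}(\partial^* \Omega \cap F_j) + 2 \operatorname{Vol}(\Omega)
\]
over Caccioppoli sets $\Omega \subset M$ separating the two base-face regions. A minimiser $\Sigma = \partial^* \Omega \cap \operatorname{int} M$ then satisfies $H_\Sigma = -2$ and meets each $F_j$ at contact angle $\bar \gamma_j$. The extra dihedral angle hypothesis \eqref{extra dihedral angle condition} prevents $\Sigma$ from concentrating along an edge, while the cylinder trapping condition of Definition~\ref{cylinder condition} supplies an ambient obstacle that keeps $\Sigma$ away from the opposite face or apex; alternatively, the hypothesis $\bar \gamma_j \leq \tfrac{\pi}{2}$ permits the direct elliptic-regularity arguments of Li~\cite{li-polyhedron-2020,li-dihedral-2020-1}.

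Once $\Sigma$ exists I would insert it into the second variation of $\mathcal F$ with test function $1$, combine with the Gauss equation on $\Sigma$ and with the boundary contributions forced by $H_{F_j} \geq \bar H_{\bar F_j}$ and $\measuredangle F_j F_{j+1} \leq \measuredangle \bar F_j \bar F_{j+1}$, and use $R_g \geq -6$ to make every inequality saturate, in the same rearrangement that powers Li's Euclidean and parabolic-cube arguments. This forces $\Sigma$ to be totally umbilic with $|A|^2 = 2$ (an intrinsic horosphere), $R_g \equiv -6$ along $\Sigma$, the contact angles to be exactly $\bar \gamma_j$, and each $F_j$ to be totally umbilic along $\Sigma \cap F_j$ with the reference mean curvature. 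An implicit function theorem argument for the capillary CMC$(-2)$ problem with Robin boundary data then extends $\Sigma$ to a local smooth foliation $\{\Sigma_t\}$ whose infinitesimal generator is a nowhere-vanishing Jacobi field; translating the saturated stability data into a metric expression recovers the warped product $g = \mathrm{d}r^2 + e^{-2r} g_0$ with $g_0$ flat on the base face, which is exactly the horospherical form of $b$ on $\bar P$.

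An open--closed argument then propagates the foliation through all of $M$: openness is the implicit function theorem, and closedness uses either the trapping cylinder or the angle condition to prevent leaves from degenerating before reaching the opposite face or apex. I expect the main obstacle to be exactly this boundary behaviour of the capillary minimiser near the edges, and, in the tetrahedral case, near the apex, where the prescribed contact angles make the Euler--Lagrange problem degenerate. The two alternative hypotheses of Theorem~\ref{rigidity} are precisely what is needed to control this step --- the trapping cylinder supplying an ambient barrier, and $\bar \gamma_j \leq \tfrac{\pi}{2}$ permitting the direct capillary regularity of Li --- but closing the foliation through a terminal leaf shrinking to a vertex in the cone case will likely require an additional removable-singularity analysis.
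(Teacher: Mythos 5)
Your overall skeleton (minimise the capillary functional, extract an infinitesimally rigid CMC $-2$ leaf from the saturated stability inequality, then foliate and split) follows the paper, but there are two genuine gaps. First, you have misassigned the role of the alternative hypotheses. Existence and regularity of the minimiser need neither of them: Theorem \ref{thm_regularity} only uses the angle condition \eqref{extra dihedral angle condition} and the comparison hypotheses, and the cylinder trapping condition is a condition on the \emph{flat reference} $\bar{P}$, so it cannot act as an ambient barrier or obstacle inside $(M,g)$. Where both hypotheses actually enter is in propagating rigidity from the single minimising leaf to the whole foliation: along the CMC capillary foliation one derives the differential inequality $-H'(\rho)\geq -C(\rho)(H(\rho)+2)$ with $C(\rho)=\sum_j \cot\gamma_j\,|\partial\Sigma_\rho\cap F_j|$ (Proposition \ref{mean curvature ode}), and one needs $C(\rho)\geq 0$; this is immediate when $\bar{\gamma}_j\leq\tfrac{\pi}{2}$, and under the cylinder trapping condition it follows from the vector identity of Lemma \ref{lem_bdry_infi_rig}, which may fail for a general reference with obtuse $\bar{\gamma}_j$. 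Your ``open--closed argument'' does not contain this mechanism: the nearby leaves produced by the implicit function theorem are only CMC capillary with some $H(\rho)$, not a priori $-2$ and not rigid, and it is exactly the ODE comparison combined with the first variation and the minimality of $\Sigma_0$ (forcing $F(\rho)$ constant, hence $H\equiv -2$ and each leaf infinitesimally rigid) that closes the argument. Without control on the sign of $C(\rho)$ this step fails, which is why the theorem carries the dichotomy in its hypotheses.

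Second, in the cone case the minimiser of the functional can be trivial (the vertex itself), so there is no capillary surface $\Sigma$ to start from, and your suggested ``removable-singularity analysis'' for a leaf shrinking to the vertex is not a substitute for what is actually needed. The paper handles this by first using the flat energy analysis (Proposition \ref{prop_energy_est}, which is why only tetrahedra are treated among cones) to conclude that the tangent cone at $p$ is isometric to that of $\bar{P}$, then constructing a CMC capillary foliation near the vertex in the coordinates supplied by the diffeomorphism to $\bar{P}$ (geodesic normal coordinates do not work there; see Theorem \ref{foliation near vertex}), and finally proving the asymptotic bound $\lim_{\rho\to 0}\lambda_\rho\geq -2$ via a localized Miao--Piubello-type expansion (Lemma \ref{mean curvature bound near vertex}); the two cases $\lim\lambda_\rho>-2$ and $=-2$ are then played off against the minimality of the vertex through the first variation. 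None of this is present, even in outline, in your proposal, so as written the cone-type rigidity would not go through.
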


\begin{remark}
    Note that any cone type polyhedron satisfies the cylinder trapping condition (see Lemma \ref{lem_tetrahedron_trapping} for tetrahedron),
    and a flat prism type polyhedron $\bar{P}$ satisfies cylinder trapping condition if and only if one of the base faces can be put inside another base face strictly.
    Hence, Theorem \ref{rigidity} holds for any possible tetrahedron $\bar{P}$.
\end{remark}

    Because we have fixed the base face of the flat prism to be the one with the smaller $x^1$ coordinates i.e. $\bar{B}_1$, the cylinder trapping condition means that the face $\bar{B}_2$ with the greater $x^1$ coordinate is smaller in $(\mathbb{R}^3_+,\delta)$. If the base face is smaller, we need to switch the roles of $\bar{B}_1$ and $\bar{B}_2$ and do the corresponding modifications to the functional \eqref{action}. Instead of studying stable surfaces of mean curvature $-2$, we need to study surfaces of mean curvature $2$. We leave these modifications and corresponding versions of Theorems \ref{dihedral non-rigidity} and \ref{rigidity} to the readers.

    We would like to mention that our work has a few differences to Li \cite{li-polyhedron-2020}:
    (I) We introduce the cylinder trapping condition (Definition \ref{cylinder condition}) which allows us to include a wider class of polyhedra in the rigidity Theorem \ref{rigidity}; (II)
    Whether Theorem \ref{dihedral non-rigidity} holds for general cone type polyhedra is quite delicate, the original proof of Li contains a delicate mistake that he was only able to correct under additional assumptions for general cone type polyhedra (see Conjecture \ref{conj_rig_pyramid}). We can only remove his assumptions for tetrahedra (see Proposition \ref{prop_energy_est}).
    (III) Using our proof for the CMC foliation near the vertex, one can remove a restrictive assumption Li implicitly made on the metric near the vertex in the Euclidean dihedral rigidity. (Compare Section \ref{sec:foliation near vertex} and \cite[Section 4.2]{li-polyhedron-2020}.)

The article is organized as follows:

In Section \ref{existence and regularity}, we formulate the variational problem which leads to the proof of Theorem \ref{dihedral non-rigidity} and \ref{rigidity}. In Section \ref{cone type polyhedron}, we analyse a closely related functional \eqref{functional in flat background} on the flat cone type polyhedron in detail. In Section \ref{sec:dihedral non-rigid}, we give the proof of Theorem \ref{dihedral non-rigidity}. In the Section \ref{sec:rigidity}, we conclude the rigidity of Theorem \ref{rigidity}.

The Appendix \ref{variation of mean curvature and contact angle} contains the variational formulas for the mean curvature and the contact angle. In Appendix \ref{eval}, we evaluate a component of the hyperbolic mass on a family of polyhedra. In Appendix \ref{proof of minimiser in general tetrahedron}, we give the proof of Proposition \ref{prop_energy_est}.

\

\section*{{\textbf{Acknowledgements}}}

We thank Prof. Chao Li (NYU) for his detailed explanation of his paper \cite{li-polyhedron-2020} and useful comments on our work.  We also thank our advisor Prof. Martin Li (CUHK) for his constant support and encouragement.  X. Chai would like to acknowledge the support from the KIAS (Korea Institute for Advanced Study) research fellow grant MG074402 and thank Inkang Kim (KIAS) for explaining some basics of tetrahedra in hyperbolic space related to the paper \cite{chai-asymptotically-2021} and the Appendix \ref{eval}. G. Wang is partially supported by a research grant from the Research Grants Council of the Hong Kong Special Administrative Region, China [Project No.:CUHK 14304120].

\

\section{Existence and regularity}\label{existence and regularity}

We indicate how to prove Theorem \ref{dihedral non-rigidity}. In the upper half space model, a polyhedron as in Definitions \ref{cone} and \ref{prism} is foliated by stable surfaces of constant mean curvature $\pm 2$ and each leaf forms a constant contact angle with a side face. In a Riemannian polyhedron satisfying the conditions in Theorem \ref{dihedral non-rigidity}, we wish to find a surface sharing the same properties with the leaf. After such a surface is found, an analysis of the stability inequality would finish the proof. This is the goal of this section.

\subsection{The functional}\label{the functional}

We define the energy functional,
\[
    \mathcal{F}(E)=\mathcal{H}^2 (\partial E \cap \mathring{M}) - 2\mathcal{H}^3 (E) - \sum_j \cos
  \bar{\gamma}_j \mathcal{H}^2 (\partial E \cap F_j),\quad \text{for }E\in \mathscr{E}, \label{action in terms of cacciopolli}
\]
where $\mathscr{E}$ is defined as
$$
    \mathscr{E}:=
    \begin{cases}
        \{E\subset M: p \in E, E\cap B=\emptyset\}, & P\text{ is of cone type},\\
        \{E\subset M: B_1 \subset E, E\cap B_2=\emptyset\}, & P\text{ is of prism type}.
    \end{cases}
$$

Now, we consider the variational problem
\begin{equation}
I=\inf \{\mathcal{F}(E):E\in \mathscr{E}\}.
\label{action}
\end{equation}

If it happens $\Sigma:=\partial E \cap \mathring{E}$ is a $C^{1,\alpha}$ surface, we use the notations as shown in Figure \ref{labelling-of-vectors}.


\begin{figure}[ht]
    \centering
    %
	\begingroup
	\fontsize{9pt}{12pt}
	\def\svgwidth{0.8\columnwidth}
	\import{./figures/}{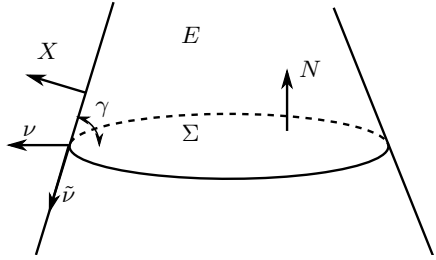}
	\endgroup

    \caption{Labelling of vector names.}
    \label{labelling-of-vectors}
\end{figure}

We list these notations as follows:
\begin{itemize}
    \item $X$, the outward pointing unit normal vector field of $\partial M$ in $M$.
    \item $N$, the unit normal vector field of $\Sigma$, pointing into $E$.
    \item $\tilde{\nu}$, the unit normal vector field of $\partial \Sigma$ in $\partial M$, pointing outward $E$.
    \item $A$ (resp. ${A_{\partial M}}$), the second fundamental form of $\Sigma$ (resp. $\partial M$) in $M$.
    \item $H$ (resp. ${H_{\partial M}}$), the mean curvature of $\Sigma$ (resp. $\partial M$) in $M$.
\end{itemize}

Let $\phi_t$ be a family of diffeomorphisms $\phi_t : \Sigma \to M$ such that
$\phi_t (\partial \Sigma) \subset \partial M$ and $\phi_0 (\Sigma) = \Sigma$.
Let $\Sigma_t$ be $\phi_t (\Sigma)$ and $E_t$ be the corresponding component
separated by $\Sigma_t$. Let $Y$ be the vector field
$\tfrac{\partial}{\partial t} \phi_t$, and $Y$ is tangent to $\partial M$.

Define $\mathcal{A}(t)$ as
\begin{equation*}
  \mathcal{A} (t) =\mathcal{H}^2 (\Sigma_t) - 2\mathcal{H}^3 (E_t) - \sum_j
  \cos \bar{\gamma }_j \mathcal{H}^2 (\partial E_t \cap F_j).
\end{equation*}
By the first variation formula, let $f = \langle Y, N \rangle$, and
\begin{equation}
  \mathcal{A}' (0) = \int_{\Sigma} (H + 2) f + \sum_j \int_{\partial \Sigma
  \cap F_j} \langle Y, \nu - \cos \bar{\gamma }_j \tilde{\nu} \rangle .
  \label{first variation of action}
\end{equation}
We get $2 \int_{\Sigma} f$ from the variation of $- 2\mathcal{H}^3 (E_t)$
because $N$ points inward of $E_t$ and $\Sigma$ moving in the direction of $N$
would decrease $\mathcal{H}^3 (E_t)$.

Assume that $\Sigma$ is of constant mean curvature $-2$, we then have the second
variational formula:
\begin{equation}
   \mathcal{A}'' (0) = - \int_{\Sigma}[ f \Delta f +
  (|A|^2 +\ensuremath{\operatorname{Ric}}(N)) f^2 ]+ \sum_{j }
  \int_{\partial \Sigma \cap F_j} f (\tfrac{\partial f}{\partial \nu} - Q f),
  \label{second variation}
\end{equation}
where $Q$ is given on $\partial \Sigma \cap F_j$ by
\begin{equation}
  Q f = \tfrac{1}{\sin \bar{\gamma }_j} {A_{\partial M}} (\tilde{\nu}, \tilde{\nu}) - \cot \bar{\gamma}_j A
  (\nu, \nu) . \label{Q}
\end{equation}
This formula can be found in \cite{ros-stability-1997}, but notice that our convention of the second fundamental form $A$ differs from \cite{ros-stability-1997} by a sign. See also Appendix \ref{variation of mean curvature and contact angle}.

\subsection{Existence and regularity}

The existence and regularity of the minimiser for the variational problem (\ref{action})
is an easy extension of Li's work. We have the following.

\begin{theorem} [{\cite[Theorem 2.1]{li-polyhedron-2020}}]
\label{thm_regularity}
Let $(M^3,g)$ be a Riemannian polyhedron of cone or prism type.
Suppose the mean curvature of each face of $M$ is no less than those $\bar{P}$, dihedral angles $\measuredangle_{ij}M\le
\measuredangle_{ij}\bar{P}$, and $M$ satisfies angle condition \eqref{extra dihedral angle condition}.

Let $E\in \mathscr{E}$ be a minimiser for the variational problem (\ref{action}).
Suppose $E\neq \emptyset$ if $\bar{P}$ is of cone type.

Then $\Sigma=\partial E \cap \mathring{M}$ has constant mean curvature $-2$, $C^{1,\alpha}$ to its corners for some $\alpha\in (0,1)$, and meets $F_j$ at constant angle $\bar{\gamma}_j$.
\end{theorem}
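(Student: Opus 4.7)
The plan is to run the direct method in $BV$ and then invoke regularity theory for capillary surfaces. First, I would verify that $\mathcal{F}$ is bounded below on $\mathscr{E}$: the volume term gives $-2\mathcal{H}^3(E)\ge -2|M|$, and $|\sum_j \cos\bar\gamma_j\,\mathcal{H}^2(\partial E\cap F_j)|\le \mathcal{H}^2(\partial M)$ since $|\cos\bar\gamma_j|\le 1$. Taking a minimizing sequence $\{E_k\}\subset\mathscr{E}$, the first term of $\mathcal{F}$ gives a uniform bound on the interior perimeter $|D\chi_{E_k}|(\mathring M)$, so $BV$ compactness yields a subsequential $L^1$-limit $E$. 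The set constraints defining $\mathscr{E}$ are preserved by $L^1$-convergence, and standard lower semicontinuity for the $BV$ functionals appearing in $\mathcal{F}$ gives $\mathcal{F}(E)\le \liminf_k \mathcal{F}(E_k)=I$, so $E$ is a minimizer. In the prism case, the admissibility conditions force $\mathcal{H}^3(E)\in(0,|M|)$ automatically; in the cone case, this nontriviality is the hypothesis $E\ne\emptyset$ together with $E\cap B=\emptyset$.

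For regularity of $\Sigma:=\partial E\cap\mathring M$, comparison with compactly supported perturbations in $\mathring M$ shows that $\partial^*E$ has distributional mean curvature $-2$, so by the classical regularity theory for sets of finite perimeter minimizing a prescribed mean curvature functional, $\Sigma$ is a smooth embedded surface of constant mean curvature $-2$ in $\mathring M$. At a point of $\partial\Sigma$ lying in the smooth interior of a face $F_j$, admissible deformations tangent to $F_j$ produce the capillary first-variation identity, and standard capillary regularity (compare the second variation formula derived in Appendix \ref{variation of mean curvature and contact angle}, and the framework of \cite{ros-stability-1997}) yields $C^{1,\alpha}$-regularity of $\Sigma$ up to $F_j$ with contact angle exactly $\bar\gamma_j$. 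The strict mean-curvature hypothesis on each face of $M$, combined with the maximum principle, rules out the degenerate possibility that $\Sigma$ contains an open piece of any face.

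The main obstacle is the behavior near the edges $F_j\cap F_{j+1}$, where $\partial\Sigma$ must transition between two faces and $\Sigma$ naturally develops a corner. Here the compatibility condition \eqref{extra dihedral angle condition} is indispensable: it is precisely the condition under which the three angles $\bar\gamma_j$, $\bar\gamma_{j+1}$, and $\measuredangle F_jF_{j+1}$ admit local $C^2$ barrier surfaces meeting $F_j$ and $F_{j+1}$ at the prescribed contact angles in a tubular neighborhood of the edge. Sliding such barriers shows that $\Sigma$ cannot accumulate on any edge and forces the free boundary to meet each face transversally with the correct angle, after which Taylor-type corner regularity delivers the claimed $C^{1,\alpha}$-regularity of $\Sigma$ up to its corners, following the argument of Li \cite{li-polyhedron-2020}. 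The strict inequality in \eqref{extra dihedral angle condition} is essential, since the barrier construction degenerates exactly when the angle sum $\bar\gamma_j+\bar\gamma_{j+1}$ reaches $\pi\pm\measuredangle F_jF_{j+1}$; this is the delicate step that must be imported from the Euclidean treatment.
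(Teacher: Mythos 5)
Your proposal follows essentially the same route as the paper, which itself defers to Li's Theorem 2.1: interior regularity and capillary boundary regularity away from the corners via the maximum principle and the Taylor / De Philippis--Maggi theory, then Simon-type corner regularity using the angle condition \eqref{extra dihedral angle condition}; the existence discussion is superfluous since the theorem already assumes a minimiser $E\in\mathscr{E}$ is given. One small correction: the mean curvature hypothesis here is \emph{not} strict ("no less than"), so ruling out $\Sigma$ coinciding with, or sticking to, a face should rest on the Solomon--White strong maximum principle (and, along the side faces, on $|\cos\bar{\gamma}_j|<1$) rather than on strictness of the mean curvature inequality.
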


The proof for Theorem \ref{thm_regularity} is essentially the same as Li's proof.

We list the key steps in this proof here.
The interested reader is referred to \cite{li-polyhedron-2020}.

\begin{enumerate}
    \item By using the maximum principle (see \cite{SolomonWhite-max-principle-1989}) and boundary regularity theorem (see \cite{Taylor-regularity-1977, Philippis-regularity-2014}), we know $\Sigma$ is regular in the interior and on the boundary away from the corner.
    \item Establish regularity near the corners by adapting Simon's argument \cite{Simon-corner-regular-1993} and using curvature estimates.  
\end{enumerate}

\begin{remark}
    Note that recently, Zhang \cite{Zhang-max-principle-2022} gave a boundary maximum principle for stationary varifolds with fixed contact angle conditions.
    This result can help us shorten the proof of Theorem \ref{thm_regularity}.
\end{remark}

\section{Cone type polyhedron}\label{cone type polyhedron}

In this section, we show that if the Riemannian polyhedron $(M,g)$ of tetrahedron type has dihedral angles less than its reference along every edge, then the inifimum of the energy \eqref{action in terms of cacciopolli} is negative. 

The energy evaluated on a family of surfaces approaching the apex $p$ of $(M,g)$ will limit to zero. Because of the scaling, the term $\mathcal{H}^3(E)$ would play less role in determining the sign of \eqref{action in terms of cacciopolli} as surfaces in a tiny neighbourhood of $p$ approach $p$. Even better, we only have to consider \eqref{functional in flat background}. If we find a minimiser which makes \eqref{functional in flat background} negative when the reference is a tetrahedron (see Propositions \ref{prop_energy_est} and \ref{prop_pyramid_another_condition}), then this in turn would guarantee the existence of a surface sufficiently close to the apex $p\in M$ with \eqref{action in terms of cacciopolli} negative.

Meanwhile, Example \ref{tetrahedron example} showed that there are polyhedral cones with the same dihedral angles but not isometric to each other. Together with Conjecture \ref{minimiser existence conjecture}, it made Theorems \ref{dihedral non-rigidity} and \ref{rigidity} quite difficult for cone type polyhedra other than tetrahedra.

To describe the geometric properties of cone type polyhedra, first, 
we recall some basic facts about polyhedral cones.
Note that several new notations are only used in this section.

\subsection{Basic properties of convex polyhedral cones}%
\label{sec:basic_properties_of_convex_polyhedral_cones}

In this subsection, we give some basic results about convex polyhedral cones.
Since these results are quite standard, we omit the proof.
Interested readers are referred to \cite[Chapter 1]{fenchel-cone-1953} for example.

\begin{definition}
	Let $\left\{ u_1,\cdots ,u_k \right\}\subset \mathbb{R}^3 $ be the set of non-zero points in $\mathbb{R}^3 $.
	We say $\boldsymbol{C}$ is a \textit{(convex) polyhedral cone} spanned by $\left\{ u_1,\cdots ,u_k \right\}$ if
	\[
		\boldsymbol{C}=\left\{ \sum_{j =1}^{n}\lambda_j u_j: \lambda_j\ge 0, \forall 1\le j\le k \right\}.
	\]
	We write such cone as $\boldsymbol{C}=\mathrm{span}(u_1,\cdots ,u_k)$.
	\label{def_convex_poly_cone}
\end{definition}

\begin{figure}[ht]
    \centering
	\begingroup
	\fontsize{9pt}{12pt}
	\def\svgwidth{0.8\columnwidth}
\begingroup%
  \makeatletter%
  \providecommand\color[2][]{%
    \errmessage{(Inkscape) Color is used for the text in Inkscape, but the package 'color.sty' is not loaded}%
    \renewcommand\color[2][]{}%
  }%
  \providecommand\transparent[1]{%
    \errmessage{(Inkscape) Transparency is used (non-zero) for the text in Inkscape, but the package 'transparent.sty' is not loaded}%
    \renewcommand\transparent[1]{}%
  }%
  \providecommand\rotatebox[2]{#2}%
  \newcommand*\fsize{\dimexpr\f@size pt\relax}%
  \newcommand*\lineheight[1]{\fontsize{\fsize}{#1\fsize}\selectfont}%
  \ifx\svgwidth\undefined%
    \setlength{\unitlength}{680.31496063bp}%
    \ifx\svgscale\undefined%
      \relax%
    \else%
      \setlength{\unitlength}{\unitlength * \real{\svgscale}}%
    \fi%
  \else%
    \setlength{\unitlength}{\svgwidth}%
  \fi%
  \global\let\svgwidth\undefined%
  \global\let\svgscale\undefined%
  \makeatother%
  \begin{picture}(1,0.375)%
    \lineheight{1}%
    \setlength\tabcolsep{0pt}%
    \put(0,0){\includegraphics[width=\unitlength,page=1]{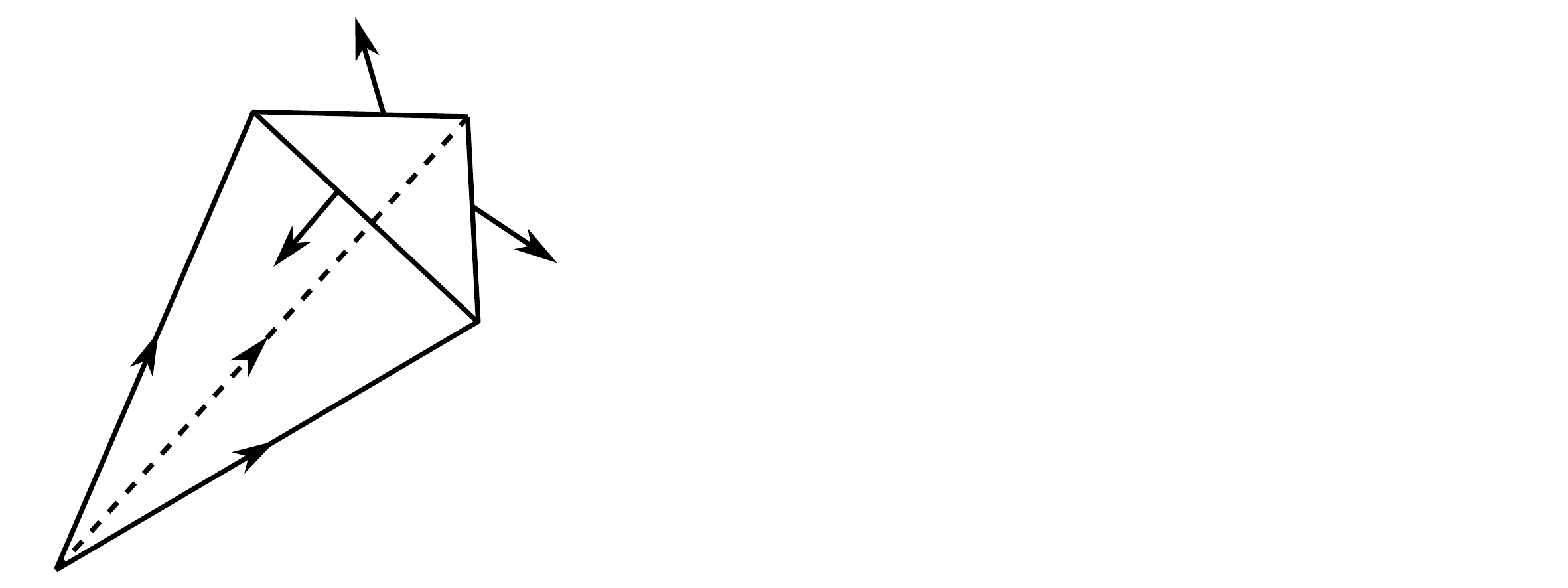}}%
    \put(0.0972812,0.02069186){\color[rgb]{0,0,0}\makebox(0,0)[lt]{\lineheight{0}\smash{\begin{tabular}[t]{l}$u_i$, span vectors\end{tabular}}}}%
    \put(0.244729,0.3449715){\color[rgb]{0,0,0}\makebox(0,0)[lt]{\lineheight{0}\smash{\begin{tabular}[t]{l}$n_i$, face vectors\end{tabular}}}}%
    \put(0,0){\includegraphics[width=\unitlength,page=2]{cones.pdf}}%
    \put(0.68141797,0.07778955){\color[rgb]{0,0,0}\makebox(0,0)[lt]{\lineheight{0}\smash{\begin{tabular}[t]{l}$\boldsymbol{C}$\end{tabular}}}}%
    \put(0.68286545,0.27098364){\color[rgb]{0,0,0}\makebox(0,0)[lt]{\lineheight{0}\smash{\begin{tabular}[t]{l}$\boldsymbol{C}^*$\end{tabular}}}}%
    \put(0.54535439,0.09587888){\color[rgb]{0,0,0}\makebox(0,0)[lt]{\lineheight{0}\smash{\begin{tabular}[t]{l}$u_i$\end{tabular}}}}%
    \put(0.64833328,0.22148378){\color[rgb]{0,0,0}\makebox(0,0)[lt]{\lineheight{0}\smash{\begin{tabular}[t]{l}$n_i$\end{tabular}}}}%
  \end{picture}%
\endgroup%

	\endgroup

    \caption{A polyhedral cone and its polar cone}
    \label{fig:cones}
\end{figure}

There is another way to define the polyhedral cones by the intersection of half-spaces.
\begin{definition}
	Let $\left\{ n_1,\cdots ,n_k \right\}\subset \mathbb{R}^3 $ be the set of non-zero vectors in $\mathbb{R}^3 $.
	We say $\boldsymbol{C}$ is a \textit{polyhedral cone} generated by $\left\{ n_1,\cdots ,n_k \right\}$ if
	\[
		\boldsymbol{C}=\left\{ x \in \mathbb{R}^3 :x \cdot n_j \le 0 , \forall 1\le j\le k\right\}.
	\]
	We write it as $C=\mathrm{face}(n_1,\cdots ,n_k)$.
\end{definition}

\begin{definition}
	For any non-empty subset $\boldsymbol{C}\subset \mathbb{R}^3 $, we define the \textit{polar cone} of $\boldsymbol{C}$ as,
	\[
		\boldsymbol{C}^*:=\left\{ x \in \mathbb{R}^3 : x\cdot y \le 0, \forall y \in \boldsymbol{C} \right\}.
	\]
\end{definition}

A simple geometric argument shows the following relation between $\boldsymbol{C}$ and $\boldsymbol{C}^*$.
\begin{proposition}
	Given a polyhedral cone $\boldsymbol{C}$ defined as
	\[
		\boldsymbol{C}=\mathrm{span}(u_1,\cdots ,u_k)=\mathrm{face}(n_1,\cdots ,n_l),
	\]
	we have
	\[
		\boldsymbol{C}^*=\mathrm{span}(n_1,\cdots ,n_l)=
		\mathrm{face}(u_1,\cdots ,u_k).
	\]
	
\end{proposition}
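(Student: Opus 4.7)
The plan is to verify the two equalities separately and to note that they are mirror images of each other under polarity. First I would derive $\boldsymbol{C}^{*}=\mathrm{face}(u_1,\ldots,u_k)$ directly from the definitions. Writing a general element of $\boldsymbol{C}$ as $y=\sum_{j=1}^{k}\lambda_j u_j$ with $\lambda_j\ge 0$, the condition $x\cdot y\le 0$ for every $y\in \boldsymbol{C}$ is linear and nonnegatively homogeneous in the $\lambda_j$, so it reduces to $x\cdot u_j\le 0$ for each $j$ (take $\lambda_i=\delta_{ij}$). This is exactly the condition that $x\in \mathrm{face}(u_1,\ldots,u_k)$.

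Next I would establish $\boldsymbol{C}^{*}=\mathrm{span}(n_1,\ldots,n_l)$. The easy inclusion $\mathrm{span}(n_1,\ldots,n_l)\subseteq \boldsymbol{C}^{*}$ follows from the face description of $\boldsymbol{C}$: for any $x=\sum_j \mu_j n_j$ with $\mu_j\ge 0$ and any $y\in \boldsymbol{C}$, we have $x\cdot y=\sum_j \mu_j(n_j\cdot y)\le 0$ because each $n_j\cdot y\le 0$. The reverse inclusion $\boldsymbol{C}^{*}\subseteq \mathrm{span}(n_1,\ldots,n_l)$ is the substantive part; it is the content of Farkas' lemma, or equivalently the Weyl--Minkowski theorem asserting that a polyhedral cone presented as a finite intersection of half-spaces is also finitely generated, with generators chosen from the outward normals $n_j$ of its facets. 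I would cite this standard fact from \cite[Chapter 1]{fenchel-cone-1953}.

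A conceptually cleaner route, which I would mention as an alternative, is to combine the first identity with the bipolar theorem $\boldsymbol{C}^{**}=\boldsymbol{C}$ for closed convex cones. Applying the first identity to the finitely generated presentation of $\boldsymbol{C}^{*}$ (whose existence is itself the Weyl--Minkowski statement) yields the second identity as its polar dual. Thus the two equalities in the proposition really are a single duality principle. In the low dimension $3$ that is relevant to the paper, one could also finish the proof by a hands-on enumeration of the extreme rays of $\boldsymbol{C}^{*}$ as the $n_j$'s, using that each $n_j$ is an outward conormal to a two-dimensional facet.

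The main obstacle is the nontrivial direction of Weyl--Minkowski: showing that a cone cut out by finitely many linear inequalities is finitely generated, and that its generators are drawn from the given $n_j$'s rather than from some larger auxiliary set. Everything else is an essentially formal unwinding of definitions. Since this duality is entirely classical and independent of the hyperbolic geometry developed later in the paper, I would keep the proof short and reference \cite{fenchel-cone-1953} for the detailed Weyl--Minkowski argument.
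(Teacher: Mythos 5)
Your argument is correct: the identity $\boldsymbol{C}^*=\mathrm{face}(u_1,\ldots,u_k)$ is indeed purely definitional, the inclusion $\mathrm{span}(n_1,\ldots,n_l)\subseteq\boldsymbol{C}^*$ is immediate, and the reverse inclusion is exactly the cone form of Farkas' lemma (equivalently Weyl--Minkowski together with the bipolar theorem, using that finitely generated cones are closed). This matches the paper's treatment, which omits the proof as standard and refers to the same source \cite[Chapter 1]{fenchel-cone-1953}, so nothing further is needed.
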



We say a polyhedral cone $\boldsymbol{C}$ is \textit{non-degenerate} if $\boldsymbol{C}$ and $\boldsymbol{C}^*$ have non-empty interiors.

\begin{remark}
	Note that since $\boldsymbol{C}^{**}=\boldsymbol{C}$, we know $\boldsymbol{C}$ is non-degenerate if and only if $\boldsymbol{C}^{*}$ is non-degenerate.
\end{remark}

In general, some of $n_j$ or $u_j$ are redundant in the definition of $\boldsymbol{C}$.
Indeed, we can choose them based on geometric observation.
For any non-degenerate polyhedral cone $\boldsymbol{C}$, we use $F_1,\cdots ,F_k$ to denote its faces and write $n_j$ as the unit normal of each face $F_j$, pointing outside of $\boldsymbol{C}$.
We can arrange $F_j$ such that $\mathrm{det}(n_j,n_{j+1},n_{j+2})<0$ for each $1\le j\le k$.
Here we use the subscript $(\cdot)_{k+j}=(\cdot)_{j}$.
Such choice of $n_j$ makes sure $n_j \times n_{j+1}$ pointing the same direction with $F_j \cap F_{j+1}$.

Now we can choose $u_j=\frac{n_j \times n_{j+1}}{\left|n_j \times n_{j+1}\right|}$, and we know $F_j \cap F_{j+1}=\left\{ r u_j:r\ge 0 \right\}$.
In this case, we call this cone a \textit{$k$-faced polyhedral cone} and we always use $F_j,u_j,n_j$ defined above from now on.
We use $\theta_j=\theta_j(\boldsymbol{C})=\measuredangle F_jF_{j+1}$ to denote the dihedral angles between its faces.

\begin{remark}
	An easy computation implies $\mathrm{det}(u_j,u_{j+1},u_{j+2})>0$. This  agrees with the conventional notations.
	For example, for the cone $\boldsymbol{C}=\{ x=(x_1,x_2,x_3) \in \mathbb{R}^3 : x_j\ge 0, \forall i=1,2,3 \}$, we usually choose $F_j=\left\{ x_j=0 \right\}, n_j=-e_j, u_j=e_{j+2}$.
	Note that although $u_j\times u_{j+1}$ parallels to $n_j$, they point to different directions.
\end{remark}

Since $k$-faced polyhedral cones are closely related to the spherical convex $k$-gons on $\mathbb{S}^2$, we give some basic results about spherical polygons.
We write $\boldsymbol{D}=\boldsymbol{D}_{\boldsymbol{C}}:=\boldsymbol{C}\cap \mathbb{S}^2$ for arbitrary cone $\boldsymbol{C}$.
Similarly, we write $\boldsymbol{D}^\dag:=\boldsymbol{C}^*\cap \mathbb{S}^2$ as the dual of the spherical $k$-gon $\boldsymbol{D}$.
\begin{proposition}
	$\boldsymbol{C}$ is a $k$-faced polyhedral cone if and only if $\boldsymbol{D}_{\boldsymbol{C}}$ is a non-degenerate spherical convex $k$-sided polygon on the unit sphere $\mathbb{S}^2$.
\end{proposition}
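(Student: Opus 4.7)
The plan is to set up the radial projection correspondence $\boldsymbol{C}\leftrightarrow \boldsymbol{D}_{\boldsymbol{C}}=\boldsymbol{C}\cap\mathbb{S}^2$ and check that each structural property transfers. Since every cone $\boldsymbol{C}\subset\mathbb{R}^3$ with apex at the origin satisfies $\boldsymbol{C}=\{r x : r\geq 0,\ x\in \boldsymbol{D}_{\boldsymbol{C}}\}$, the map $\boldsymbol{C}\mapsto \boldsymbol{D}_{\boldsymbol{C}}$ is a bijection between cones and closed subsets of $\mathbb{S}^2$. So both implications reduce to matching the defining conditions on the two sides.

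For the forward direction, suppose $\boldsymbol{C}$ is a $k$-faced polyhedral cone with faces $F_j\subset \Pi_j:=\{x\cdot n_j=0\}$ and edges along the rays $\mathbb{R}_{\geq 0}u_j$. Then $F_j\cap\mathbb{S}^2$ is an arc of the great circle $\Pi_j\cap\mathbb{S}^2$, and the endpoints of these arcs are exactly the points $u_j\in\mathbb{S}^2$. Hence $\boldsymbol{D}_{\boldsymbol{C}}$ is bounded by $k$ great circle arcs meeting at $k$ vertices. Convexity transfers because the geodesic arc in $\mathbb{S}^2$ joining two points $p,q\in \boldsymbol{D}_{\boldsymbol{C}}$ lies in the plane through $p,q$ and the origin, which together with convexity of $\boldsymbol{C}$ puts the arc inside $\boldsymbol{C}\cap \mathbb{S}^2=\boldsymbol{D}_{\boldsymbol{C}}$. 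Non-degeneracy of $\boldsymbol{C}$ means both $\boldsymbol{C}$ and $\boldsymbol{C}^*$ have non-empty interior; the first is equivalent to $\boldsymbol{D}_{\boldsymbol{C}}$ having non-empty interior in $\mathbb{S}^2$, and the second, via a vector $v\in\mathrm{int}\,\boldsymbol{C}^*$ with $v\cdot x<0$ on $\boldsymbol{C}\setminus\{0\}$, is equivalent to $\boldsymbol{D}_{\boldsymbol{C}}$ sitting strictly inside the open hemisphere $\{x\cdot v<0\}$. In particular $\boldsymbol{D}_{\boldsymbol{C}}$ is a genuine spherical $k$-gon rather than a lower dimensional or antipodally spanning set.

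For the converse, given a non-degenerate convex spherical $k$-gon $\boldsymbol{D}\subset\mathbb{S}^2$ with vertices $v_1,\ldots,v_k$ and edges lying on great circles $\Pi_j\cap\mathbb{S}^2$, I would define $\boldsymbol{C}=\{rx:r\geq 0,\ x\in \boldsymbol{D}\}$. Each spherical edge sweeps out a planar face $F_j\subset \Pi_j$, so $\boldsymbol{C}$ has exactly $k$ faces and exactly $k$ edges $\mathbb{R}_{\geq 0}v_j$. Convexity of $\boldsymbol{C}$ follows by the same planar argument used above. For non-degeneracy, $\boldsymbol{D}$ having non-empty interior in $\mathbb{S}^2$ gives $\boldsymbol{C}$ non-empty interior in $\mathbb{R}^3$, and the fact that a non-degenerate convex spherical polygon lies in some open hemisphere $\{x\cdot v<0\}$ supplies the vector $v\in\mathrm{int}\,\boldsymbol{C}^*$ needed to see that $\boldsymbol{C}^*$ has non-empty interior. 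Hence $\boldsymbol{C}$ is a $k$-faced polyhedral cone.

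The only subtle point is the last one: a convex spherical polygon is contained in an open hemisphere exactly when it does not contain a pair of antipodal points, and one must check that this is precisely what the paper's non-degeneracy convention requires. Once this is reconciled with Definition \ref{def_convex_poly_cone} and the polar cone definition, both implications are essentially two-line verifications, so I do not expect any real obstacle beyond bookkeeping of the non-degeneracy hypothesis.
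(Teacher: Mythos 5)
Your argument is correct and is the standard radial-projection correspondence that the authors implicitly invoke when they omit this proof and refer to Fenchel's monograph. Both directions are handled cleanly: radial projection of chords gives spherical convexity of $\boldsymbol{D}_{\boldsymbol{C}}$, faces intersect $\mathbb{S}^2$ in great-circle arcs with endpoints $u_j$, and the two halves of the non-degeneracy condition ($\mathring{\boldsymbol{C}}\neq\emptyset$ and $\mathring{\boldsymbol{C}^*}\neq\emptyset$) translate exactly into $\boldsymbol{D}_{\boldsymbol{C}}$ having non-empty interior and lying in an open hemisphere, the latter because $v\in\mathring{\boldsymbol{C}^*}$ forces $v\cdot x<0$ on $\boldsymbol{C}\setminus\{0\}$ and, conversely, compactness of $\boldsymbol{D}$ inside an open hemisphere shows the witnessing $v$ is an interior point of $\boldsymbol{C}^*$. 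The one point you rightly flag as needing reconciliation — what ``non-degenerate'' means for a spherical polygon — is indeed left implicit in the paper, but your identification of it with ``full-dimensional and contained in an open hemisphere'' is the reading consistent with the way the authors use $\boldsymbol{D}$ and $\boldsymbol{D}^\dag$ elsewhere, so there is no gap.
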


Note that the vertices of $\boldsymbol{D}$ are $u_1,\cdots, u_k$. We use $\widefrown{p,q}=\widefrown{pq}$ to denote the great-circle arc between $p,q$ on $\mathbb{S}^2$. 
Then it is easy to see the angle $\theta_j$ is just the angle between $\widefrown{u_j u_{j+1}}$ and $\widefrown{u_j u_{j-1}}$.
On the other hand, we also know $\theta_j=\pi-\measuredangle (n_j,n_{j+1})$ and hence $\pi-\theta_j=\left|\widefrown{n_j n_{j+1}}\right|$, the length of arc $\widefrown{n_j n_{j+1}}$.

\begin{definition}
	Suppose $\boldsymbol{C},\boldsymbol{C}'$ are two $k$-faced polyhedral cones for some $k \in \mathbb{N}$.
	We say $\boldsymbol{C}'$ has \textit{less dihedral angle} than $\boldsymbol{C}$ if $\theta_j\le \theta'_j$ for all $1\le j\le k$, where $\theta_j'$ is the dihedral angle between their side faces.

	We say $\boldsymbol{C}'$ has \textit{strictly less dihedral angle} than $\boldsymbol{C}$ if in addition, there is at least one of $j$ such that $\theta_j <\theta_j'$.
	
\end{definition}
\begin{remark}
	$\boldsymbol{C}'$ has less dihedral angle than $\boldsymbol{C}$ if and only if the spherical $k$-gon $\boldsymbol{D}'^\dag$ has longer side lengths than the corresponding sides of $\boldsymbol{D}^\dag$.
	\label{rmk_less_dihe}
\end{remark}

\subsection{Properties of pyramids}%
\label{sub:properties_of_pyramids}

In this subsection, we fix a $k$-faced polyhedral cone $\boldsymbol{C}$.
We use the concept of pyramids instead of cone type polyhedra to shorten our statement.

For any $\xi \in \mathbb{S}^2 $, let $\pi:=\left\{ x \in \mathbb{R}^3 :x\cdot \xi=-1 \right\}$ be the plane associated with $\xi$. It has a distance of 1 from $O$.

We write $P=P_{\xi,\boldsymbol{C}}:=\left\{ x \in \boldsymbol{C}:x\cdot \xi\ge -1\right\}$.
Then, we can find that $P$ is the region bounded by $\pi$ and $\boldsymbol{C}$.
In general, $P$ might not be a bounded polyhedron.
If $P$ is indeed a bounded polyhedron, then it is a $k$-gonal pyramid.

We have the following result by a simple geometric argument.
\begin{lemma}
	$P$ defined above is a pyramid if and only if $\xi$ is in the interior of $\boldsymbol{C}^*$.
\end{lemma}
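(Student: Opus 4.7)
The plan is to reduce the stated equivalence to a boundedness assertion and then analyze the sign of the linear functional $y\mapsto\xi\cdot y$ on $\boldsymbol{C}$. By construction $P=\boldsymbol{C}\cap\{x\cdot\xi\geq -1\}$ is the intersection of a convex polyhedral cone (with apex $O$) with a closed half-space whose boundary plane $\pi$ avoids $O$; whenever $P$ is bounded it is automatically a $k$-gonal pyramid with apex $O$ and base $P\cap\pi$, and conversely every pyramid is bounded. So it suffices to show $P$ is bounded if and only if $\xi\in\mathrm{int}(\boldsymbol{C}^*)$.

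For the ``if'' direction, assume $\xi\in\mathrm{int}(\boldsymbol{C}^*)$, so that $\xi\cdot y<0$ for every nonzero $y\in\boldsymbol{C}$. The continuous function $y\mapsto\xi\cdot y$ on the compact spherical polygon $\boldsymbol{D}_{\boldsymbol{C}}=\boldsymbol{C}\cap\mathbb{S}^2$ is strictly negative, hence attains a maximum $-c<0$. By homogeneity this gives $\xi\cdot z\leq -c|z|$ for all $z\in\boldsymbol{C}$, so the defining inequality $\xi\cdot z\geq -1$ of $P$ forces $|z|\leq 1/c$, proving boundedness.

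For the ``only if'' direction I argue the contrapositive and exhibit an unbounded ray in $P$ whenever $\xi\notin\mathrm{int}(\boldsymbol{C}^*)$. If $\xi\notin \boldsymbol{C}^*$, there is some $y\in\boldsymbol{C}$ with $\xi\cdot y>0$; then every positive multiple $ty$ lies in $\boldsymbol{C}$ and satisfies $\xi\cdot(ty)>0>-1$, so the whole ray $\{ty:t\geq 0\}$ is contained in $P$. If instead $\xi\in\partial\boldsymbol{C}^*$, some facet of $\boldsymbol{C}^*$ contains $\xi$, and by the duality proposition recalled earlier (in particular $\boldsymbol{C}^{**}=\boldsymbol{C}$) this facet is annihilated by an extremal generator $u_j\in\boldsymbol{C}$; thus $\xi\cdot(tu_j)=0\geq -1$ for every $t\geq 0$, and once more an entire ray sits inside $P$.

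The only mildly delicate step is the boundary case $\xi\in\partial\boldsymbol{C}^*$, where one must actually produce a nonzero $y\in\boldsymbol{C}$ with $\xi\cdot y=0$. This is exactly the content of the biduality between $\boldsymbol{C}$ and $\boldsymbol{C}^*$: the facets of $\boldsymbol{C}^*$ are in bijection with the extremal rays of $\boldsymbol{C}$, via orthogonality. Once this point is handled, both implications combine to give the claimed equivalence.
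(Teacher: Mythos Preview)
Your argument is correct and takes a genuinely different route from the paper. The paper proceeds concretely: it observes that $P$ is a pyramid exactly when each edge ray $\{r u_j : r\ge 0\}$ of $\boldsymbol{C}$ meets the plane $\pi$, computes the intersection point $A_j=-u_j/(u_j\cdot\xi)$, and concludes that existence of all $A_j$ is equivalent to $u_j\cdot\xi<0$ for every $j$, which is the condition $\xi\in\mathring{\boldsymbol{C}^*}$ since $\boldsymbol{C}^*=\mathrm{face}(u_1,\ldots,u_k)$.

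You instead reduce to the equivalence ``$P$ bounded $\Longleftrightarrow$ $\xi\in\mathring{\boldsymbol{C}^*}$'' and argue via compactness of the spherical cross-section $\boldsymbol{D}_{\boldsymbol{C}}$ for one direction and an explicit unbounded ray for the other. This is cleaner in that it avoids any coordinate computation and would generalise verbatim to higher-dimensional cones, whereas the paper's approach has the advantage of producing the vertices $A_j$ explicitly (which are used downstream in Lemma~\ref{lem_energy_compute}). Your reduction step ``$P$ bounded $\Rightarrow$ $P$ is a $k$-gonal pyramid'' is stated without proof; it is true, but you might note that boundedness forces every edge ray to meet $\pi$, so the base $P\cap\pi$ is the convex hull of the $k$ distinct points where those rays hit $\pi$.
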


\begin{proof}
	We write $A_{j}$ as the intersection of $F_j \cap F_{j+1}\cap \pi$ if $F_j \cap F_{j+1}\cap \pi\neq \emptyset $.
	Then we know $P$ is a pyramid if and only if $A_{j}$ exists for each $1\le j\le k$.
	Note that $A_{j}$ can be computed by $A_{j}=-\frac{u_j}{u_j\cdot \xi}$.
	Hence, $P$ is a pyramid if and only if $u_j \cdot \xi<0$ for each $1\le j\le k$.

	Note that the condition $u_j\cdot\xi =0$ is equivalent to the fact that $\xi$ is lying in the plane spanned by $n_j,n_{j+1}$, the plane in which one of the faces of $\boldsymbol{C}^*$ locates in.
	Hence $u_j\cdot \xi<0, \forall 1\le j\le k$ is equivalent to the condition that $\xi \in \boldsymbol{C}^*$ and $\xi$ cannot on the faces of $\boldsymbol{C}^*$.
\end{proof}

If $P$ is a pyramid, we write $B=\pi \cap \boldsymbol{C}$ as the base face of $P$. In addition, $\xi$ is the inner unit normal vector of the base face $B$.
We define $\gamma_j:=\measuredangle F_j B=
\measuredangle (n_j,\xi)$.
In particular, we know $\gamma_j=\left|\widefrown{\xi n_j}\right|$.

Now we fix some underlying $k$-faced polyhedral cone $\bar{\boldsymbol{C}}$ and $k$-gonal pyramid $\bar{P}$.
We would like to define energy $E$ for any pyramid $P$ with respect to $\bar{P}$ as
\begin{equation}
	E(P)=E_{\bar{P}}(P):=\left|B\right|-\sum_{j =1}^{k}\cos \bar{\gamma}_j \left|F_j \cap P\right|.\label{functional in flat background}
\end{equation}

 When we try to reduce the dihedral angles of $\boldsymbol{C}$, we would expect the energy should decrease.
Specifically, we make the following conjecture.
\begin{conjecture}\label{minimiser existence conjecture}
	Suppose $\boldsymbol{C}$ is a $k$-faced polyhedral cone such that it has less dihedral angle than $\bar{\boldsymbol{C}}$.
	Then we have
	\[
		\inf _{\xi \in \mathring {\boldsymbol{C}^*}}
		E_{\bar{P}}(P_{\xi,\boldsymbol{C}})\le 0.
	\]
	The equality holds if and only if $\bar{\boldsymbol{C}}$ is isometric to $\boldsymbol{C}$.
 Here, {\itshape{isometric}} means that we can find an orthogonal matrix $\Omega$ such that $\Omega(\bar{\boldsymbol{C}})=\boldsymbol{C}$.
	\label{conj_rig_pyramid}
\end{conjecture}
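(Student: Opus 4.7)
The plan is to combine a variational analysis of $E_{\bar P}(P_{\xi,\boldsymbol{C}})$ on the interior of the dual spherical polygon $\boldsymbol{D}^\dag$ with a continuity/deformation argument interpolating between $\bar{\boldsymbol{C}}$ and $\boldsymbol{C}$. First I would rewrite the energy in a tractable closed form. Setting $\alpha_j(\xi):=-u_j\cdot\xi$ (positive precisely when $\xi\in\mathring{\boldsymbol{D}^\dag}$) and $\beta_j:=|\widefrown{u_{j-1}u_j}|$, elementary computation gives
\[
|F_j\cap P_{\xi,\boldsymbol{C}}|=\frac{\sin\beta_j}{2\alpha_{j-1}(\xi)\alpha_j(\xi)},\qquad \cos\gamma_j(\xi)=n_j\cdot\xi,
\]
and the projection identity $|B|=\sum_j\cos\gamma_j|F_j|$ (a consequence of $\sum_{\text{faces}}(n_{\text{out}}\cdot\xi)|F|=0$ on the closed surface of $P_{\xi,\boldsymbol{C}}$) rewrites the energy as
\[
E_{\bar P}(P_{\xi,\boldsymbol{C}})=\sum_{j=1}^k\bigl(\cos\gamma_j(\xi)-\cos\bar\gamma_j\bigr)\,|F_j\cap P_{\xi,\boldsymbol{C}}|.
\]
Thus the desired conclusion $\inf E_{\bar P}\le 0$ becomes a weighted-average inequality between $\gamma_j(\xi)$ and $\bar\gamma_j$ with weights $|F_j|$. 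I would then compute $\nabla_{\mathbb{S}^2}E$, record the Euler-Lagrange system at interior critical points, and separately analyze the boundary regime $\alpha_j\to 0^+$: there $|B|,|F_{j-1}|,|F_j|$ all blow up like $1/\alpha_j$, and a leading-order expansion either forces $E\to -\infty$ along some boundary approach (in which case the claim is immediate) or certifies that the infimum is attained at an interior $\xi^\ast\in\mathring{\boldsymbol{D}^\dag}$.

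The central step is a deformation argument. Construct a continuous family of $k$-faced polyhedral cones $\boldsymbol{C}_t$, $t\in[0,1]$, with $\boldsymbol{C}_0=\bar{\boldsymbol{C}}$, $\boldsymbol{C}_1=\boldsymbol{C}$, and each $\theta_j(t)$ monotonically non-increasing in $t$; concretely, slide the vertices $n_j(t)$ of the dual spherical polygon $\boldsymbol{D}_t^\dag$ along geodesic paths whose edge-lengths $\pi-\theta_j(t)$ grow monotonically from $\pi-\bar\theta_j$ to $\pi-\theta_j$. Let $I(t):=\inf_{\xi\in\mathring{\boldsymbol{D}_t^\dag}}E_{\bar P}(P_{\xi,\boldsymbol{C}_t})$. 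At $t=0$ the distinguished choice $\xi=\bar\xi$ matches all reference angles, so $I(0)\le E_{\bar P}(\bar P)=0$. By the envelope theorem at an interior optimizer $\xi^\ast(t)$ (the Euler-Lagrange equation annihilates the $\xi$-variation), $I'(t)$ collapses to an explicit sum $\sum_j w_j(t)\,\dot\theta_j(t)$, whose weights $w_j(t)$ come from the face areas $|F_j|$ and the spherical geometric factors relating the motion of $n_j(t)$ to the change of $\theta_j(t)$. The goal is to establish $w_j(t)\ge 0$, which gives $I'(t)\le 0$ and therefore $I(1)\le I(0)\le 0$.

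Rigidity then follows: if $I(1)=0$, the monotonicity forces $I(t)\equiv 0$ throughout, so $w_j(t)\dot\theta_j(t)\equiv 0$; perturbing one dihedral at a time in the interpolating family forces $\theta_j=\bar\theta_j$ for all $j$, and equality of the dual spherical $k$-gons promotes to an isometry $\Omega(\bar{\boldsymbol{C}})=\boldsymbol{C}$. The main obstacle is precisely the positivity of the weights $w_j(t)$ in the envelope derivative. The dihedral angles $\theta_j$ are not independent parameters but are coupled by the spherical Gauss-Bonnet identity on $\boldsymbol{D}_t$, so $I'(t)$ inevitably mixes contributions from several faces through the dependence $n_j=n_j(t)$. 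For $k=3$ the tight spherical-triangle constraint pins down the signs (essentially what Proposition \ref{prop_energy_est} does for tetrahedra); for $k\ge 4$ the extra freedom leaves a genuine positivity to be extracted, which seems to require additional convex-geometric input about dual spherical $k$-gons with prescribed side lengths, such as a Schur-type monotonicity for the capillary functional. This is precisely why the paper records the statement as Conjecture~\ref{conj_rig_pyramid} rather than as a theorem.
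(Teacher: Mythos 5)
This statement is recorded in the paper as a \emph{conjecture}: the paper proves it only for $k=3$ (Proposition \ref{prop_energy_est}, via the matrix formulation and positive-definiteness argument of Appendix \ref{proof of minimiser in general tetrahedron}) and, for general $k$, only under the extra angle hypothesis \eqref{extra dihedral angle condition} (Proposition \ref{prop_pyramid_another_condition}). Your proposal does not close the general case either, and you say so yourself: the entire content of the conjecture is concentrated in the positivity of the envelope weights $w_j(t)$, which you leave as "a genuine positivity to be extracted." An outline whose decisive step is declared open is not a proof, so there is a genuine gap exactly where the paper locates the difficulty. In addition, the boundary analysis ("either $E\to-\infty$ or the infimum is attained at an interior $\xi^\ast$") is asserted, not proved; as $\xi$ approaches $\partial\boldsymbol{C}^*$ the pyramid becomes unbounded and the face areas blow up with signs governed by $\cos\gamma_j-\cos\bar\gamma_j$, so this dichotomy, as well as the differentiability of $t\mapsto I(t)$ and of the optimizer needed for the envelope computation, requires real work.

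There is also a concrete error in your rigidity step. You propose to deduce from $I(1)=0$ that $\theta_j=\bar\theta_j$ for all $j$ and then to "promote" equality of the dual spherical $k$-gons' side lengths to an isometry $\Omega(\bar{\boldsymbol{C}})=\boldsymbol{C}$. For $k\ge 4$ this last implication is false: by the paper's Example \ref{tetrahedron example} and Proposition \ref{prop_non_isometric_cone}, there are non-isometric $4$-faced cones with identical dihedral angles (the dual spherical rhombi have equal side lengths but different angles). So even granting the monotonicity $I'(t)\le 0$, your equality analysis would only yield equality of dihedral angles, which does not give the "only if" direction of the conjecture; conversely, the envelope derivative $\sum_j w_j\dot\theta_j$ sees only the angles and is therefore structurally unable to distinguish the non-isometric cones in that example, whereas the conjecture demands that the infimum be strictly negative for at least one of them. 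Relatedly, Propositions \ref{prop_quad_1} and \ref{prop_quad_2} show that the pointwise comparison $\gamma_j\ge\bar\gamma_j$ underlying the $k=3$ proof genuinely fails for $k=4$, so any correct argument must exploit the weighted structure of \eqref{eq_lem_energy} in a way your sketch does not yet supply.
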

We do have some information for $E_{\bar{P}}(P_{\xi,\boldsymbol{C}})$ near $\bar{P}$.
Let $P_t = P_{\nu_t, \boldsymbol{C}_t}$ be a differentiable family of polyhedra such that
$P_0 = \bar{P}$, $E (t) = E_{\bar{P}} (P_t)$ and $\theta_j (t) = \theta_j
(\boldsymbol{C}_t)$. Applying the Schlafli formula \cite{rivin-schlafli-2000} for polyhedra in the Euclidean 3-space,
as we decrease $\theta_j (t)$ near $t=0$, the energy $E (t)$ decreases. We compute the energy $E$ first.

\begin{lemma}
	\label{lem_energy_compute}
	$E$ can be computed by
	\begin{equation}
		E=E_{\xi,\bar{\boldsymbol{C}}}=\sum_{j =1}^{k}
		\left|F_j \cap P\right|(\cos \gamma_j-\cos \bar{\gamma}_j).
		\label{eq_lem_energy}
	\end{equation}
\end{lemma}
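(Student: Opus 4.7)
The plan is to reduce \eqref{eq_lem_energy} to the purely geometric identity
\[
  |B| = \sum_{j=1}^{k} \cos\gamma_j \, |F_j \cap P|,
\]
since substituting this into the definition $E = |B| - \sum_j \cos\bar{\gamma}_j |F_j\cap P|$ and combining terms yields the claimed formula at once. So the entire content of the lemma is this projection identity between the base and the side faces.

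To prove the identity I would use the divergence theorem applied to the constant vector field $\xi$ on $P$. First I record the orientations dictated by the setup: since $\xi$ is by definition the \emph{inner} unit normal of $B$, the \emph{outward} unit normal of $B$ in $P$ is $-\xi$; the outward unit normal of $F_j\cap P$ in $P$ is $n_j$ (the same as for $F_j \subset \boldsymbol{C}$, because $P \subset \boldsymbol{C}$ is cut off from $\boldsymbol{C}$ only by $\pi$). Next, from $\gamma_j = \measuredangle(n_j,\xi)$ and the fact that $n_j,\xi$ are unit vectors, I get $n_j \cdot \xi = \cos\gamma_j$.

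Now since $\operatorname{div}\xi \equiv 0$, the divergence theorem on $P$ with the decomposition $\partial P = B \cup \bigcup_{j=1}^{k}(F_j \cap P)$ gives
\[
  0 \;=\; \int_{\partial P} \xi \cdot \nu \, dA
  \;=\; \int_B \xi \cdot (-\xi) \, dA + \sum_{j=1}^{k} \int_{F_j \cap P} \xi \cdot n_j \, dA
  \;=\; -|B| + \sum_{j=1}^{k} \cos\gamma_j \, |F_j \cap P|,
\]
which is exactly the identity needed. Plugging back into the definition of $E$ yields \eqref{eq_lem_energy}.

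There is no real obstacle: the lemma is simply the elementary observation that for a closed polyhedron the signed projections of the faces onto any fixed direction cancel (the ``closed vector area'' identity). The only points of care are keeping track of the inward/outward orientation convention for $\xi$ on the base, and reading off $\cos\gamma_j = n_j\cdot\xi$ from the definition $\gamma_j = \measuredangle(n_j,\xi)$.
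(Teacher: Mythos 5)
Your proof is correct, and it reaches the lemma by a different route than the paper. Both arguments hinge on the same projection identity $|B|=\sum_{j=1}^{k}\cos\gamma_j\,|F_j\cap P|$, but the paper proves it by triangulating: it projects the apex $O$ to the point $O'$ in the plane $\pi$, writes $|B|$ as the sum of \emph{signed} areas of the triangles $A_jA_{j+1}O'$, and identifies each signed triangle area with $\cos\gamma_j\,|F_j\cap P|$, with an explicit caveat that when $O'$ falls outside $B$ (equivalently when some $\gamma_j>\pi/2$) the areas must be counted with sign. Your divergence-theorem argument, i.e.\ $0=\int_{\partial P}\xi\cdot\nu\,dA=-|B|+\sum_j(n_j\cdot\xi)\,|F_j\cap P|$ for the constant field $\xi$, proves the same identity in one line and absorbs that sign bookkeeping automatically, since $n_j\cdot\xi=\cos\gamma_j$ may simply be negative; your orientation checks (outward normal of $B$ is $-\xi$, outward normal of $F_j\cap P$ is $n_j$, and $\cos\gamma_j=n_j\cdot\xi$ from $\gamma_j=\measuredangle(n_j,\xi)$) are exactly the points that need care, and you got them right, consistent with the paper's convention \eqref{eq:dihedral angle}. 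The paper's triangulation has the mild advantage of staying entirely within elementary plane geometry and making visible where each term $\cos\gamma_j|F_j\cap P|$ comes from, while your ``closed vector area'' argument is shorter, needs no case distinction, and would generalize verbatim to higher dimensions or to non-pyramidal decompositions of $\partial P$.
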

\begin{proof}
	Let $O'$ be the projection of $O$ to the plane $\pi$. Then we find
	\[
	\left|B\right|=\sum_{j =1}^{k}
		\text{signed area}(\triangle A_{j}A_{j+1}O')
		=\sum_{j =1}^{k}\cos \gamma_j \left|F_j \cap P\right|.
	\]
	
	Note that we allow $O'$ to be outside of base face $B$. 
	In this case, we need to consider the signed area here for triangle $A_jA_{j+1}O'$.

	Together with the definition of $E$, we can get formula (\ref{eq_lem_energy}).
\end{proof}

\begin{proposition}
  Let $\ell_j (t)$ be length of the side edeges of $P_t$, then
  \begin{equation*}
    E' (0) = \tfrac{1}{2} \sum_j \ell_j (0) \theta_j' (0) .
  \end{equation*}
\end{proposition}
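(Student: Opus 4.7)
The plan is to differentiate the formula for $E$ provided by Lemma \ref{lem_energy_compute} and then invoke the Euclidean Schl\"afli formula \cite{rivin-schlafli-2000} on the closed pyramid $P_t$ to convert the resulting sum over base dihedral angles into a sum over side dihedral angles.

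Starting from $E(t) = \sum_j |F_j \cap P_t|\,(\cos\gamma_j(t)-\cos\bar\gamma_j)$, I observe that at $t=0$ every factor $\cos\gamma_j(0)-\cos\bar\gamma_j$ vanishes since $P_0=\bar P$. Hence the contributions from differentiating $|F_j\cap P_t|$ drop out, and only the derivative of $\cos\gamma_j$ survives, giving
\[
E'(0) = -\sum_{j=1}^k |\bar F_j|\sin\bar\gamma_j\,\gamma_j'(0),
\]
where $\bar F_j := F_j\cap\bar P$. A short planar cross-section argument in the $2$-plane perpendicular to the base edge of $\bar F_j$ yields $h_j\sin\bar\gamma_j = \mathrm{dist}(O,\pi) = 1$, where $h_j$ denotes the altitude of the triangle $\bar F_j$ from the apex $O$; this uses only that $|\xi|=1$, so that the base plane $\pi=\{x\cdot\xi=-1\}$ sits at unit distance from $O$, and it holds regardless of whether $\bar\gamma_j$ is acute or obtuse. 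Consequently $|\bar F_j|\sin\bar\gamma_j = \tfrac12\bar b_j$, where $\bar b_j$ is the length of the base edge of $\bar F_j$, and so $E'(0) = -\tfrac12\sum_j \bar b_j\,\gamma_j'(0)$.

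To finish, I would apply the Euclidean Schl\"afli formula to the closed pyramid $P_t\subset\mathbb{R}^3$. Its $2k$ edges partition into $k$ base edges (with lengths $b_j$ and interior dihedral angles $\gamma_j$ between $F_j$ and $B$) and $k$ side edges (with lengths $\ell_j$ and interior dihedral angles $\theta_j$ between $F_j$ and $F_{j+1}$); both angle conventions agree with the paper's $\measuredangle$ since the latter is precisely the interior dihedral angle. Schl\"afli thus reads, at $t=0$,
\[
\sum_{j=1}^k \bar b_j\,\gamma_j'(0) + \sum_{j=1}^k \ell_j(0)\,\theta_j'(0) = 0.
\]
Substituting this into the expression for $E'(0)$ yields the asserted identity $E'(0) = \tfrac12\sum_j \ell_j(0)\,\theta_j'(0)$. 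The main bookkeeping point I anticipate is the geometric identity $|\bar F_j|\sin\bar\gamma_j = \tfrac12\bar b_j$, which depends crucially on $\xi$ being a unit vector and on $\bar F_j$ being a triangle with apex $O$; once that is in hand, Schl\"afli converts the base-angle derivative into the side-angle derivative with essentially no further geometric input.
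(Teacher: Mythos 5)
Your proposal is correct and follows essentially the same route as the paper: differentiate the formula of Lemma \ref{lem_energy_compute}, use the identity $|F_j|\sin\gamma_j=\tfrac12 l_j$ coming from $h_j\sin\gamma_j=1$ (unit distance from the apex to the base plane), note the area-derivative terms vanish at $t=0$ since $\gamma_j(0)=\bar\gamma_j$, and conclude with the Euclidean Schl\"afli formula applied to all $2k$ edges of the pyramid. No gaps.
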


\begin{proof}
First from
\begin{equation*}
  E (t) = \sum_j |F_j |  (\cos \gamma_j - \cos \bar{\gamma}_j),
\end{equation*}
we have
\begin{equation*}
  \tfrac{\mathrm{d}}{\mathrm{d} t} E (t) = \sum_j \left( (\cos \gamma_j - \cos
  \bar{\gamma}_j) \tfrac{\mathrm{d}}{\mathrm{d} t} |F_j | - |F_j | \sin
  \gamma_j \tfrac{\mathrm{d}}{\mathrm{d} t} \gamma_j \right) .
\end{equation*}
Let $h_j$ be the distance from $O$ to the edge $B \cap F_j$ and $l_j$ be the
length of the edge $F_j \cap B$, from computing the area of $F_j$ we see
\begin{equation*}
  |F_j | \sin \gamma_j = \tfrac{1}{2} l_j h_j \sin \gamma_j .
\end{equation*}
Since $O$ is of distance 1 to $B$, so $h_j \sin \gamma_j = 1$. So $|F_j | \sin
\gamma_j = \tfrac{1}{2} l_j$ and
\begin{equation*}
  \tfrac{\mathrm{d}}{\mathrm{d} t} E (t) = \sum_j \left( (\cos \gamma_j - \cos
  \bar{\gamma}_j) \tfrac{\mathrm{d}}{\mathrm{d} t} |F_j | - \tfrac{1}{2} l_j
  \tfrac{\mathrm{d}}{\mathrm{d} t} \gamma_j \right) .
\end{equation*}
Combining the Schlafli formula which says that
\begin{equation*}
  \sum_j \ell_j \tfrac{\mathrm{d}}{\mathrm{d} t} \theta_j + \sum_j l_j \tfrac{\mathrm{d}}{\mathrm{d} t} 
  \gamma_j = 0
\end{equation*}
and $\gamma_j (0) = \bar{\gamma}_j$ finishes the
proof.
\end{proof}

If we assume $k=3$, we have a positive answer to Conjecture \ref{conj_rig_pyramid}.
\begin{proposition}
	We fix an underlying $3$-faced polyhedral cone and a pyramid $\bar{P}$.
	Suppose $\boldsymbol{C}$ is another $3$-faced polyhedral cone.
	If $\boldsymbol{C}$ has less dihedral angle than $\bar{\boldsymbol{C}}$, then we can find a unit vector $\xi$ in the interior of $\boldsymbol{C}^*$ such that
	\begin{equation}
		E_{\bar{P}}(P_{\xi,\boldsymbol{C}})\le 0.
		\label{eq_energy_est}
	\end{equation}
	If $\boldsymbol{C}$ is not isometric to $\bar{\boldsymbol{C}}$, then the inequality (\ref{eq_energy_est}) can be strict.
	
	\label{prop_energy_est}
\end{proposition}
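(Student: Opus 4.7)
The plan is to produce $\xi$ by a continuous deformation of the reference pyramid $\bar P$, along which two of the three apex angles $\gamma_j$ are pinned to their reference values and the Schl\"afli formula forces monotonicity.

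\textbf{Interpolation of cones.} Because $k=3$, a trihedral cone is rigidly determined up to an isometry of $\mathbb R^3$ by its three dihedral angles (the spherical triangle $\boldsymbol{D}^\dag$ on $\mathbb S^2$ with sides $\pi-\theta_j$ is SSS-rigid). Set $\theta_j(t):=(1-t)\bar\theta_j+t\theta_j$ for $t\in[0,1]$ and let $\boldsymbol{C}_t$ be the smooth family of trihedral cones continuously placed in $\mathbb R^3$ so that $\boldsymbol{C}_0=\bar{\boldsymbol{C}}$ and $\boldsymbol{C}_1$ is isometric to $\boldsymbol{C}$. Each $\theta_j'(t)=\theta_j-\bar\theta_j\le 0$, strict for at least one $j$ whenever $\boldsymbol{C}$ is not isometric to $\bar{\boldsymbol{C}}$.

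\textbf{Constrained flow of the apex direction.} Track a smooth family $\xi(t)\in\mathring{\boldsymbol{C}_t^*}$ with $\xi(0)=\bar\xi$ determined by the two implicit constraints
\[
\gamma_1(t):=d_{\mathbb S^2}(\xi(t),n_1(t))\equiv\bar\gamma_1, \qquad \gamma_2(t)\equiv\bar\gamma_2.
\]
Since $\xi$ has two degrees of freedom on $\mathbb S^2$, the implicit function theorem produces this flow near $t=0$; extending it to all of $[0,1]$ requires verifying that the two spherical circles around $n_1(t),n_2(t)$ of radii $\bar\gamma_1,\bar\gamma_2$ continue to intersect transversely in the interior of the enlarging triangle $\boldsymbol{D}_t^\dag$, which is the main technical point of the argument.

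\textbf{Monotonicity via Schl\"afli.} With $\gamma_1,\gamma_2$ pinned, the Euclidean Schl\"afli identity $\sum_j\ell_j d\theta_j+\sum_j l_j d\gamma_j=0$ for the pyramid $P_t=P_{\xi(t),\boldsymbol{C}_t}$ collapses to $l_3(t)\gamma_3'(t)=-\sum_j\ell_j(t)\theta_j'(t)$, so
\[
\gamma_3'(t)=-\frac{1}{l_3(t)}\sum_{j=1}^3\ell_j(t)\theta_j'(t)\ge 0,
\]
with strict inequality whenever some $\theta_j'(t)<0$. Hence $\gamma_3(t)\ge\bar\gamma_3$ on $[0,1]$, strict on $(0,1]$ in the nonisometric case.

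\textbf{Conclusion.} Under the constraints, Lemma \ref{lem_energy_compute} immediately reduces to
\[
E(t)=|F_3\cap P_t|\bigl(\cos\gamma_3(t)-\cos\bar\gamma_3\bigr).
\]
Since $|F_3\cap P_t|>0$ and $\cos\gamma_3(t)\le\cos\bar\gamma_3$, we conclude $E(t)\le 0$ on $[0,1]$, and at $t=1$ (after rotating back to the original $\boldsymbol{C}$ by the isometry identifying $\boldsymbol{C}_1$ with $\boldsymbol{C}$) this produces the desired $\xi$ and the inequality (\ref{eq_energy_est}). In the nonisometric case $\gamma_3(1)>\bar\gamma_3$ strictly, giving $E(1)<0$.

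The main obstacle I anticipate is the openness step: ensuring $\xi(t)$ never exits $\mathring{\boldsymbol{C}_t^*}$ before $t=1$, which may fail if $\pi-\theta_3(t)$ grows past $\bar\gamma_1+\bar\gamma_2$ or if the two constraint circles become tangent. A global continuity argument is needed here, likely leveraging the dihedral angle condition (\ref{extra dihedral angle condition}) to ensure the two circles always meet transversally inside $\boldsymbol{D}_t^\dag$. The hypothesis $k=3$ is used in two places: the rigidity of trihedral cones in dihedral angles makes the interpolation $\boldsymbol{C}_t$ well-defined, and only in this case does constraining two of the three $\gamma_j$'s leave a single unconstrained apex angle $\gamma_3$ on which the Schl\"afli identity yields a clean sign.
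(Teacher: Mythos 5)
There is a genuine gap, and it sits exactly where you flagged it: the global existence of the constrained flow $\xi(t)$. The interpolation of cones and the Schl\"afli monotonicity are fine (linear interpolation of the side lengths $\pi-\theta_j(t)$ stays admissible because the spherical triangle inequalities cut out a convex region, and pinning $\gamma_1,\gamma_2$ does force $\gamma_3'\geq 0$ as long as the pyramid family exists). But keeping $\xi(t)\in\mathring{\boldsymbol{C}_t^*}$ is not a technicality you can defer: $\xi(t)$ is the intersection of the circles of radii $\bar\gamma_1,\bar\gamma_2$ about $n_1(t),n_2(t)$, and $\xi(t)$ reaches the arc $\widefrown{n_1n_2}$ (and then the circles separate, or the intersection leaves $\mathring{\boldsymbol{D}_t^\dag}$, and the pyramid degenerates) precisely when $\pi-\theta_1(t)=\bar\gamma_1+\bar\gamma_2$. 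At $t=0$ one only has $\bar\gamma_1+\bar\gamma_2>\pi-\bar\theta_1$ by the triangle inequality, and this margin can be arbitrarily small (take $\bar\xi$ close to the arc $\widefrown{\bar n_1\bar n_2}$), while $\pi-\theta_1(t)$ increases as $\theta_1(t)$ decreases. So whenever $\pi-\theta_1\geq\bar\gamma_1+\bar\gamma_2$ — which is allowed under the hypotheses of Proposition \ref{prop_energy_est} — the flow dies at some $t^*<1$ and produces no $\xi$ for the actual cone $\boldsymbol{C}$. Your proposed rescue, invoking \eqref{extra dihedral angle condition}, is not available: that inequality is \emph{not} a hypothesis of Proposition \ref{prop_energy_est}; it is the extra assumption of Proposition \ref{prop_pyramid_another_condition}, whose proof in the paper (Lemmas \ref{lem_pyramid_one_side} and \ref{lem_pyramid_another_stat}) is essentially your argument — fix two spherical distances, let the third grow, and use \eqref{eq_lem_angle_cond} to rule out exit through $\widefrown{n_1n_2}$. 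So as written your proof reproves the conditional statement, not the proposition at hand.

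The whole point of Proposition \ref{prop_energy_est} (see also remark (II) in the introduction) is to remove that angle condition for tetrahedra, and for this the paper abandons the constrained-motion picture: in Appendix \ref{proof of minimiser in general tetrahedron} it rewrites $E$ through the Gram matrices $G,\bar G$ of the unit normals, reduces the claim to the matrix inequality of Proposition \ref{prop_est_pri}, takes the unconstrained optimum $\omega=\bar\xi\bar G G^{-1}$, and — in the case some coordinate $\omega^i$ is negative, which is exactly the regime where your flow exits the triangle — projects to the face $\{\xi^3=0\}$ of the positivity cone and verifies by explicit positive-definiteness estimates that the projected point still works and has nonnegative coordinates. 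To salvage your approach you would need a genuinely new idea for continuing (or replacing) the flow once the pinned intersection point hits $\partial\boldsymbol{D}_t^\dag$, e.g.\ switching to a boundary-constrained motion in which some $\gamma_j$ is allowed to decrease while the energy is still controlled; the energy formula of Lemma \ref{lem_energy_compute} no longer gives a sign for free in that situation, so this is not a routine patch.
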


Since the proof of Proposition \ref{prop_energy_est} is quite long, we put it into Appendix \ref{proof of minimiser in general tetrahedron}.
Note that, we have assumed an additional condition \eqref{extra dihedral angle condition} on dihedral angles.
In this case, 
we have a simpler geometric proof.

\begin{proposition}
	\label{prop_pyramid_another_condition}
	Suppose $\boldsymbol{C}$ has less dihedral angle than $\bar{\boldsymbol{C}}$, and ${\boldsymbol{C}}$ satisfies
	\[
		\left|\pi-(\bar{\gamma}_j+\bar{\gamma}_{j+1})\right|\le \theta_j.
	\]
	Then we can find a unit vector $\xi$ in the interior of $\boldsymbol{C}^*$ such that
	\[
		E_{\bar{P}}(P_{\xi,\boldsymbol{C}})\le 0.
	\]
	If $\boldsymbol{C}$ is not isometric to $\bar{\boldsymbol{C}}$, then this inequality can be strict.
\end{proposition}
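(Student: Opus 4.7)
By Lemma \ref{lem_energy_compute}, the energy is
\[
E_{\bar{P}}(P_{\xi,\boldsymbol{C}}) = \sum_{j=1}^{k} |F_j \cap P|\,(\cos \gamma_j - \cos \bar{\gamma}_j),
\]
so it is enough to produce a unit vector $\xi$ in the interior of $\boldsymbol{C}^*$ for which $\gamma_j = |\widefrown{\xi n_j}| \ge \bar{\gamma}_j$ for every $j$. Since $\cos$ is decreasing on $(0,\pi)$, each summand is then non-positive, and a single strict inequality $\gamma_l > \bar{\gamma}_l$ forces $E < 0$. In the equality case $\gamma_j = \bar{\gamma}_j$ for every $j$, the spherical law of cosines applied in each triangle $\triangle(\xi, n_j, n_{j+1})$ of the triangulation of $\boldsymbol{D}^\dag$ from $\xi$ will express $\cos \phi_j$ linearly in $\cos \theta_j$; the closure relation $\sum_j \phi_j = 2\pi$ combined with $\theta_j \le \bar{\theta}_j$ will then force $\theta_j = \bar{\theta}_j$ for all $j$, and the isometry between $\bar{\boldsymbol{C}}$ and $\boldsymbol{C}$ follows.

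The geometric input comes directly from the hypothesis. Rewriting $|\pi - (\bar{\gamma}_j + \bar{\gamma}_{j+1})| \le \theta_j$ as
\[
\pi - \theta_j \;\le\; \bar{\gamma}_j + \bar{\gamma}_{j+1} \;\le\; \pi + \theta_j,
\]
one recognizes the spherical triangle inequality for a triangle on $\mathbb{S}^2$ with sides $\bar{\gamma}_j$, $\bar{\gamma}_{j+1}$, $|\widefrown{n_j n_{j+1}}| = \pi - \theta_j$. Hence for every edge $\widefrown{n_j n_{j+1}}$ of the dual polygon $\boldsymbol{D}^\dag$ there exists a point $p_j \in \mathbb{S}^2$ satisfying $|\widefrown{p_j n_j}| = \bar{\gamma}_j$ and $|\widefrown{p_j n_{j+1}}| = \bar{\gamma}_{j+1}$, and $p_j$ can be chosen on the same side of the geodesic $\widefrown{n_j n_{j+1}}$ as the interior of $\boldsymbol{D}^\dag$. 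Writing $U_j := \{x \in \mathbb{S}^2 : |\widefrown{x n_j}| < \bar{\gamma}_j\}$, the task reduces to producing a point in the admissible region $\Omega := \mathring{\boldsymbol{D}^\dag} \setminus \bigcup_{j=1}^{k} U_j$.

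To show $\Omega \ne \emptyset$, I would argue by continuous deformation: interpolate $\bar{\boldsymbol{C}}$ to $\boldsymbol{C}$ through a family $\boldsymbol{C}_t$ of $k$-faced cones whose dihedral angles $\theta_j(t)$ decrease monotonically and continuously from $\bar{\theta}_j$ at $t=0$ to $\theta_j$ at $t=1$, so that the dual sides $\pi - \theta_j(t)$ lengthen while the caps $U_j^t$ retain fixed radii $\bar{\gamma}_j$ around continuously moving vertices $n_j(t)$. At $t=0$ the reference vector $\bar{\xi}$ belongs to $\mathring{\bar{\boldsymbol{D}}^\dag}$ and lies simultaneously on every circle $\partial U_j^0$, hence $\bar{\xi} \in \Omega_0$; the hypothesis $|\pi - (\bar{\gamma}_j + \bar{\gamma}_{j+1})| \le \theta_j(t)$ persists throughout because $\theta_j(t) \ge \theta_j$; and a subsequential limit of points in $\Omega_t$ furnishes the desired $\xi$. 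The main obstacle will be to exclude that $\Omega_t$ pinches off (or becomes empty) at some intermediate time, and to rule this out I plan to follow the continuously varying intersection points $p_j(t) \in \partial U_j^t \cap \partial U_{j+1}^t$, showing that at each time at least one $p_j(t)$ remains in $\mathring{\boldsymbol{D}^\dag_t}$ and outside the remaining caps $U_l^t$; the monotonic lengthening of the sides of $\boldsymbol{D}^\dag_t$ provides the required clearance. Strictness in the non-isometric case then follows because some $\theta_j < \bar{\theta}_j$ strictly, which forces at least one cap $U_l^t$ to recede from the chosen candidate and hence $\gamma_l > \bar{\gamma}_l$ at the limit $\xi$.
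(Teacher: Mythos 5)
Your reduction via Lemma \ref{lem_energy_compute} (find $\xi\in\mathring{\boldsymbol{D}}^\dag$ with $\gamma_j=|\widefrown{\xi n_j}|\ge\bar{\gamma}_j$ for all $j$, with one strict inequality when $\boldsymbol{C}\not\cong\bar{\boldsymbol{C}}$) is exactly the paper's starting point, and your reading of the hypothesis $|\pi-(\bar{\gamma}_j+\bar{\gamma}_{j+1})|\le\theta_j$ as a spherical triangle inequality on the side $\widefrown{n_jn_{j+1}}$ is also the right geometric translation. But the heart of the proof --- that the admissible region $\Omega=\mathring{\boldsymbol{D}}^\dag\setminus\bigcup_j U_j$ is nonempty --- is left as a plan rather than proved: the deformation argument hinges on the claim that at every intermediate time at least one intersection point $p_j(t)\in\partial U_j^t\cap\partial U_{j+1}^t$ stays inside $\mathring{\boldsymbol{D}}^\dag_t$ and outside all the other caps, and no justification is given for this. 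The gap is not cosmetic, because your scheme never uses that the cone has exactly three faces, and the paper's Propositions \ref{prop_quad_1} and \ref{prop_quad_2} exhibit $4$-faced cones satisfying both hypotheses (less dihedral angle and $|\pi-(\bar{\gamma}_j+\bar{\gamma}_{j+1})|<\theta_j$) for which \emph{no} $\xi$ with $\gamma_j\ge\bar{\gamma}_j$ exists. So the clearance claim you intend to prove is false as a general statement about $k$-faced cones; any correct completion must isolate where $k=3$ enters, and your sketch does not.

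For comparison, the paper gets around this precisely by exploiting $k=3$: it changes one dihedral angle at a time (Lemmas \ref{lem_pyramid_one_side} and \ref{lem_pyramid_another_stat}), placing $\xi$ so that the spherical triangle $\xi n_1n_3$ is congruent to $\bar{\xi}\bar{n}_1\bar{n}_3$ (so two of the three distances are controlled by construction), using the spherical law of cosines to show the third distance $|\widefrown{\xi n_2}|$ increases as $\widefrown{n_1n_2}$ lengthens, and using the angle hypothesis \eqref{eq_lem_angle_cond} to rule out $\xi$ exiting $\boldsymbol{D}^\dag$ through the arc $\widefrown{n_1n_2}$ (the only possible escape route when two distances are pinned); an induction over the three edges then yields the proposition. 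If you want to salvage your deformation picture, you would essentially need to reprove this one-edge-at-a-time lemma, since for a triangle the point $p_j(t)$ you propose to track is determined by two cap boundaries and the third cap/edge is controlled exactly by the law-of-cosines monotonicity --- which is where three-facedness is used. Your equality-case discussion is also only a sketch, but it becomes routine once the main step is in place.
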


In view of Lemma \ref{lem_energy_compute}, we only need to find $\xi$ such that $\gamma_j>\bar{\gamma}_j$ if $\boldsymbol{C}$ is not isometric to $\bar{\boldsymbol{C}}$.
Hence, the proof for Proposition \ref{prop_pyramid_another_condition} is a direct consequence of the following lemma by mathematical induction.

\begin{lemma}
	\label{lem_pyramid_one_side}
	Suppose $\boldsymbol{C},\bar{\boldsymbol{C}}$ are two 3-faced polyhedral cones such that $\theta_1<\bar{\theta}_1$, $\theta_2=\bar{\theta}_2$, $\theta_3=\bar{\theta}_3$.
	Let $\bar{P}=\bar{P}_{\bar{\xi},\bar{\boldsymbol{C}}}$ be a pyramid associated with $\boldsymbol{C}$.
	Suppose $\theta_1$ satisfies $\left|\pi-(\bar{\gamma}_1+\bar{\gamma}_2)\right|<\theta_1$.
	Then, we can find a vector $\xi \in \mathring {\boldsymbol{C}^*}$ and a pyramid $P=P_{\xi,\boldsymbol{C}}$ such that
	\[
		\gamma_1>\bar{\gamma}_1, \gamma_2>\bar{\gamma}_2, \gamma_3>\bar{\gamma}_3.
	\]
\end{lemma}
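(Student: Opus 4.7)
The plan is to realize the new cone on $\mathbb{S}^{2}$ and construct $\xi$ as a small perturbation of $\bar\xi$. First I normalize coordinates on $\mathbb{S}^{2}$ so that $n_{3}=\bar n_{3}$ is the north pole and $n_{1}=\bar n_{1}$ lies on the prime meridian at colatitude $\pi-\theta_{3}$. Both $n_{2}$ and $\bar n_{2}$ then lie on the common parallel of colatitude $\pi-\theta_{2}=\pi-\bar\theta_{2}$, and the inequality $|\widefrown{n_{1}n_{2}}|=\pi-\theta_{1}>\pi-\bar\theta_{1}=|\widefrown{n_{1}\bar n_{2}}|$ forces $n_{2}$ to be the rotation of $\bar n_{2}$ about the $n_{3}$-axis to a strictly larger azimuth $\phi>\bar\phi$.

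The first key step is to show $d(\bar\xi,n_{2})>\bar\gamma_{2}$. Because $\bar\xi\in\mathring{\bar{\boldsymbol{D}}^{\dag}}$, its azimuth $\bar\psi$ lies strictly between $0$ and $\bar\phi$. The spherical law of cosines in the triangle $\bar\xi n_{3}n_{2}$ reads
\[
  \cos d(\bar\xi,n_{2})= -\cos\bar\gamma_{3}\cos\theta_{2}+\sin\bar\gamma_{3}\sin\theta_{2}\cos(\bar\psi-\phi),
\]
and since $\phi>\bar\phi>\bar\psi$ the gap $|\bar\psi-\phi|$ strictly widens past $\bar\phi-\bar\psi$, causing $\cos d(\bar\xi,n_{2})$ to drop strictly below $\cos\bar\gamma_{2}$.

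The second step is to verify $\bar\xi\in\mathring{\boldsymbol{D}^{\dag}}$. The edges $\widefrown{n_{1}n_{3}}$ and $\widefrown{n_{2}n_{3}}$ are meridians at azimuths $0$ and $\phi$, so $\bar\xi$ (with azimuth $\bar\psi\in(0,\bar\phi)\subset(0,\phi)$) lies strictly on the interior side of each. The hypothesis $|\pi-(\bar\gamma_{1}+\bar\gamma_{2})|<\theta_{1}$ enters only for the remaining edge $\widefrown{n_{1}n_{2}}$: combined with the first step it yields $\pi-\theta_{1}<\bar\gamma_{1}+\bar\gamma_{2}<\bar\gamma_{1}+d(\bar\xi,n_{2})$, so the spherical triangle $\bar\xi n_{1}n_{2}$ is non-degenerate and $\bar\xi$ does not sit on the short arc between $n_{1}$ and $n_{2}$. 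Tracking the sign of $\det(n_{1},n_{2}(t),\bar\xi)$ along a continuous deformation $\theta_{1}(t)\colon\bar\theta_{1}\to\theta_{1}$, during which the same inequality prevents $\bar\xi$ from crossing the arc, shows $\bar\xi$ stays on the $n_{3}$-side.

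Finally I choose $v\in T_{\bar\xi}\mathbb{S}^{2}$ with $v\cdot e_{1}>0$ and $v\cdot e_{3}>0$, where $e_{j}=\nabla d(\cdot,n_{j})|_{\bar\xi}$ is the unit tangent at $\bar\xi$ pointing away from $n_{j}$; such $v$ exists because $e_{1}$ and $e_{3}$ are not antipodal ($\bar\xi$ does not lie on the geodesic $\widefrown{n_{1}n_{3}}$). Setting $\xi=\exp_{\bar\xi}(\varepsilon v)$ for small $\varepsilon>0$ yields $\gamma_{1}>\bar\gamma_{1}$ and $\gamma_{3}>\bar\gamma_{3}$ at first order in $\varepsilon$, $\gamma_{2}>\bar\gamma_{2}$ by the strict estimate of the first step together with continuity, and $\xi\in\mathring{\boldsymbol{C}^{*}}$ by openness from the second step. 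The hard part will be the interior verification, as without the hypothesis $|\pi-(\bar\gamma_{1}+\bar\gamma_{2})|<\theta_{1}$ the arc $\widefrown{n_{1}n_{2}}$ could sweep past $\bar\xi$ as $\theta_{1}$ shrinks, moving $\bar\xi$ outside $\boldsymbol{D}^{\dag}$ and destroying the construction.
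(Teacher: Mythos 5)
Your argument is correct and takes essentially the same route as the paper's proof (via Lemma \ref{lem_pyramid_another_stat}): you fix the edge $\widefrown{n_1 n_3}$ and the candidate point $\bar\xi$, rotate $n_2$ to larger azimuth, use the spherical law of cosines to get $d(\bar\xi,n_2)>\bar\gamma_2$, keep $\bar\xi$ inside the new triangle by a continuous-deformation argument in which the hypothesis $|\pi-(\bar\gamma_1+\bar\gamma_2)|<\theta_1$ rules out crossing the arc $\widefrown{n_1n_2}$, and then perturb to make all three inequalities strict. This is the paper's construction (congruent triangle $\xi n_1 n_3\cong\bar\xi\bar n_1\bar n_3$, first-touching-time argument, then ``adjust $\xi$ a bit''), with your final perturbation direction merely made more explicit.
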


\begin{proof}
	Note that the condition $\left|\pi-(\bar{\gamma}_1+\bar{\gamma}_2)\right|<\theta_1$ is equivalent to
	\[
		\left|\widefrown{n_1n_2}\right|<\left|\widefrown{\bar{\xi}\bar{n}_1}\right|+\left|\widefrown{\bar{\xi}\bar{n}_2}\right|.
	\]
	
	This lemma can be formulated using the properties of dual cones as the following lemma.
\end{proof}
	\begin{lemma}
	\label{lem_pyramid_another_stat}
	We fix two spherical convex triangles $\boldsymbol{D}^\dag=n_1n_2n_3$ and $\bar{\boldsymbol{D}}^\dag=\bar{n}_1\bar{n}_2\bar{n}_3$ with
	\[
		\left|\widefrown{n_1n_3}\right|=\left|\widefrown{\bar{n}_1\bar{n}_3}\right|,
		\left|\widefrown{n_2n_3}\right|=\left|\widefrown{\bar{n}_2\bar{n}_3}\right|,
		\left|\widefrown{n_1n_2}\right|>\left|\widefrown{\bar{n}_1\bar{n}_2}\right|.
	\]
	Given $\bar{\xi} \in \mathring{\bar{\boldsymbol{D}}}^\dag$ such that
	\begin{equation}
		\left|\widefrown{n_1n_2}\right|<\left|\widefrown{\bar{\xi}\bar{n}_1}\right|+\left|\widefrown{\bar{\xi}\bar{n}_2}\right|.
		\label{eq_lem_angle_cond}
	\end{equation}
	Then we can find $\xi \in \mathring {\boldsymbol{D}}^\dag$ such that
	\[
		\left|\widefrown{\xi n_1}\right|>
		\left|\widefrown{\bar{\xi}\bar{n}_1}\right|, \left|\widefrown{\xi n_2}\right|>
		\left|\widefrown{\bar{\xi}\bar{n}_2}\right|, \left|\widefrown{\xi n_3}\right|>
		\left|\widefrown{\bar{\xi}\bar{n}_3}\right|, 
	\]
	\end{lemma}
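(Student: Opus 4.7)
The plan is to normalize coordinates, construct a candidate $\xi_0$ as an intersection of two spherical small circles, establish the key strict inequality $|\widefrown{\xi_0 n_3}|>|\widefrown{\bar\xi\bar n_3}|$ by an implicit-differentiation monotonicity argument, and then perturb $\xi_0$ to achieve strict inequalities at $n_1,n_2$ as well.

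\emph{Normalization and construction of $\xi_0$.} After a global rotation, place $\bar n_3=n_3$ at the north pole, and rotate around the pole so that $\bar n_1=n_1$; the vertices $n_2$ and $\bar n_2$ then lie at a common colatitude. The spherical hinge theorem applied to the two triangles (two pairs of equal sides and $|\widefrown{n_1 n_2}|>|\widefrown{\bar n_1\bar n_2}|$) gives $\alpha>\bar\alpha$ for the angles at $n_3$. Set $r_j:=|\widefrown{\bar\xi\bar n_j}|$. The triangle inequality in $\bar\xi\bar n_1\bar n_2$ combined with the hypothesis gives
\[
|r_1-r_2|\le|\widefrown{\bar n_1\bar n_2}|<|\widefrown{n_1 n_2}|<r_1+r_2,
\]
so the small circles $S(n_1,r_1)$ and $S(n_2,r_2)$ meet transversally. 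Let $\xi_0$ be the intersection on the same side of the great circle through $n_1,n_2$ as $n_3$.

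\emph{Monotonicity.} Deform from $\bar\alpha$ to $\alpha$ continuously, keeping $n_1,n_3,r_1,r_2$ fixed and rotating $n_2$ about $n_3$; write $\xi_0(\alpha)=(\psi(\alpha),\phi(\alpha))$ in spherical coordinates centered at $n_3$. Implicit differentiation of the defining equations
\[
\cos r_1=\cos\psi\cos L_1+\sin\psi\sin L_1\cos\phi,\quad \cos r_2=\cos\psi\cos L_2+\sin\psi\sin L_2\cos(\alpha-\phi)
\]
and solving the resulting $2{\times}2$ linear system for $(\psi',\phi')$ yields the clean expression
\[
\frac{d\psi}{d\alpha}=\frac{\sin\psi\sin L_1\sin L_2\sin\phi\sin(\alpha-\phi)}{\xi_0\cdot(n_1\times n_2)},
\]
where the denominator equals $\cos\psi\sin L_1\sin L_2\sin\alpha-\sin\psi[\sin L_2\cos L_1\sin(\alpha-\phi)+\sin L_1\cos L_2\sin\phi]$. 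The numerator is strictly positive. The denominator is strictly positive because $\xi_0\cdot(n_1\times n_2)>0$ is precisely the condition that $\xi_0$ lies on the $n_3$-side of the great circle through $n_1,n_2$, which holds by construction. This property is preserved along the whole deformation, since the strict inequality $r_1+r_2>|\widefrown{n_1 n_2}(\alpha')|$ holds for every $\alpha'\in[\bar\alpha,\alpha]$ (as $|\widefrown{n_1n_2}|$ is monotone in $\alpha$) so that the two intersection points never merge. Hence $\psi$ is strictly increasing and $|\widefrown{\xi_0 n_3}|=\psi(\alpha)>\psi(\bar\alpha)=|\widefrown{\bar\xi\bar n_3}|$.

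\emph{Perturbation.} A small perturbation of $\xi_0$ in a tangent direction $v\in T_{\xi_0}\mathbb{S}^2$ with $v\cdot n_1<0$ and $v\cdot n_2<0$ keeps the perturbed point in $\mathring{\boldsymbol{D}}^\dag$, strictly increases $|\widefrown{\xi n_1}|$ and $|\widefrown{\xi n_2}|$, and, by continuity, preserves the strict inequality $|\widefrown{\xi n_3}|>|\widefrown{\bar\xi\bar n_3}|$. Such a $v$ exists because the projections of $n_1$ and $n_2$ onto $T_{\xi_0}\mathbb{S}^2$ are linearly independent (they are distinct and non-antipodal to $\xi_0$). The principal obstacle is the monotonicity step: identifying the denominator of $d\psi/d\alpha$ with the signed distance $\xi_0\cdot(n_1\times n_2)$ is the crucial geometric observation that makes the sign transparent, and without this identification the general non-symmetric case (arbitrary $L_1,L_2$ and $\phi\ne\alpha/2$) would be forbiddingly intricate.
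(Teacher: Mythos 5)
Your construction is genuinely different from the paper's: you fix the two distances $r_1=|\widefrown{\bar\xi\bar n_1}|$, $r_2=|\widefrown{\bar\xi\bar n_2}|$ and track the intersection of the circles $S(n_1,r_1)$, $S(n_2,r_2)$ under the hinge deformation, proving $|\widefrown{\xi_0 n_3}|$ grows by implicit differentiation, whereas the paper fixes the distances to $n_1$ and $n_3$ (congruent triangle $\xi n_1 n_3\cong\bar\xi\bar n_1\bar n_3$), gets the growth of $|\widefrown{\xi n_2}|$ from two applications of the spherical law of cosines, and uses \eqref{eq_lem_angle_cond} in a first-touching-time argument to keep $\xi$ inside $\boldsymbol{D}^\dag$. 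Your identification of the denominator with $\xi_0\cdot(n_1\times n_2)$ is correct (I checked the computation), and the sign of the denominator is handled properly.

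However, there is a genuine gap at the step you dismiss in one sentence: ``The numerator is strictly positive.'' The numerator is $\sin\psi\sin L_1\sin L_2\sin\phi\sin(\alpha-\phi)$, and its positivity requires $0<\phi<\alpha'$ along the entire deformation, i.e.\ that $\xi_0(\alpha')$ stays inside the angular wedge at $n_3$ bounded by the meridians through $n_1$ and $n_2(\alpha')$. This is \emph{not} implied by the condition you do verify ($\xi_0$ on the $n_3$-side of the great circle through $n_1,n_2$): the $n_3$-side is a hemisphere, not the lune, and if $\phi$ left $(0,\alpha')$ your derivative could change sign and the monotonicity of $\psi$ would fail. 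The wedge condition can in fact be proved (exit through $\phi=\alpha'$ is blocked because $d(\alpha'-\phi)/d\alpha'\to 1$ there; exit through $\phi=0$ can only occur at the two points of $S(n_1,r_1)$ on the meridian of $n_1$, of which the ``north'' one has $\phi'>0$ and the ``south'' one satisfies $p\cdot(n_1\times n_2)<0$, hence lies on the wrong side), but this argument is missing and is of the same order of difficulty as the denominator analysis you call the principal obstacle. The same unproved statement is also silently used in your perturbation step: membership of the perturbed point in $\mathring{\boldsymbol{D}}^\dag$ presupposes $\xi_0\in\mathring{\boldsymbol{D}}^\dag$, and $\mathring{\boldsymbol{D}}^\dag$ is exactly ``wedge at $n_3$ intersected with the $n_3$-side,'' so without the wedge condition you have not placed $\xi_0$ in the triangle at all — this is precisely the containment step for which the paper invokes \eqref{eq_lem_angle_cond}. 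A secondary omission: on the sphere, $|r_1-r_2|<|\widefrown{n_1n_2}|<r_1+r_2$ does not by itself give intersection of the two circles; one also needs $r_1+r_2+|\widefrown{n_1n_2}|<2\pi$. This does hold here (e.g.\ $r_1+r_2\le|\widefrown{n_1n_3}|+|\widefrown{n_2n_3}|$ by perimeter monotonicity of nested convex spherical polygons, and the new triangle has perimeter less than $2\pi$), but it needs to be said.
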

	\begin{proof}
		[Proof of Lemma \ref{lem_pyramid_another_stat}]

		Note that since $\left|\widefrown{\bar{n}_1\bar{n}_3}\right|=\left|\widefrown{n_1n_3}\right|$, we can find a unit vector $\xi \in \mathbb{S}^2$ such that the triangle $\xi n_1n_3$ is congruent to triangle $\bar{\xi} \bar{n}_1 \bar{n}_3$.
		Moreover, $\xi$ is unique if we require $\mathrm{det}(\xi,n_1,n_3)$ has the same sign with $\mathrm{det}(n_2,n_1,n_3)$.
		This makes sure $\xi$ and $n_2$ lie the same side of the greater circle passing through $n_1,n_3$.

		Now, we show $\left|\widefrown{\xi n_3}\right|>\left|\widefrown{\bar{\xi}\bar{n}_3}\right|$ and $\xi \in \mathring {\boldsymbol{D}}^\dag$.
		We draw a 2-D example to show the idea of such proof (see Figure \ref{fig:2_dtriangle}).
\begin{figure}[h!]
    \centering
	\begingroup
	\fontsize{9pt}{12pt}
	\def\svgwidth{0.8\columnwidth}
\begingroup%
  \makeatletter%
  \providecommand\color[2][]{%
    \errmessage{(Inkscape) Color is used for the text in Inkscape, but the package 'color.sty' is not loaded}%
    \renewcommand\color[2][]{}%
  }%
  \providecommand\transparent[1]{%
    \errmessage{(Inkscape) Transparency is used (non-zero) for the text in Inkscape, but the package 'transparent.sty' is not loaded}%
    \renewcommand\transparent[1]{}%
  }%
  \providecommand\rotatebox[2]{#2}%
  \newcommand*\fsize{\dimexpr\f@size pt\relax}%
  \newcommand*\lineheight[1]{\fontsize{\fsize}{#1\fsize}\selectfont}%
  \ifx\svgwidth\undefined%
    \setlength{\unitlength}{680.31496063bp}%
    \ifx\svgscale\undefined%
      \relax%
    \else%
      \setlength{\unitlength}{\unitlength * \real{\svgscale}}%
    \fi%
  \else%
    \setlength{\unitlength}{\svgwidth}%
  \fi%
  \global\let\svgwidth\undefined%
  \global\let\svgscale\undefined%
  \makeatother%
  \begin{picture}(1,0.33333333)%
    \lineheight{1}%
    \setlength\tabcolsep{0pt}%
    \put(0,0){\includegraphics[width=\unitlength,page=1]{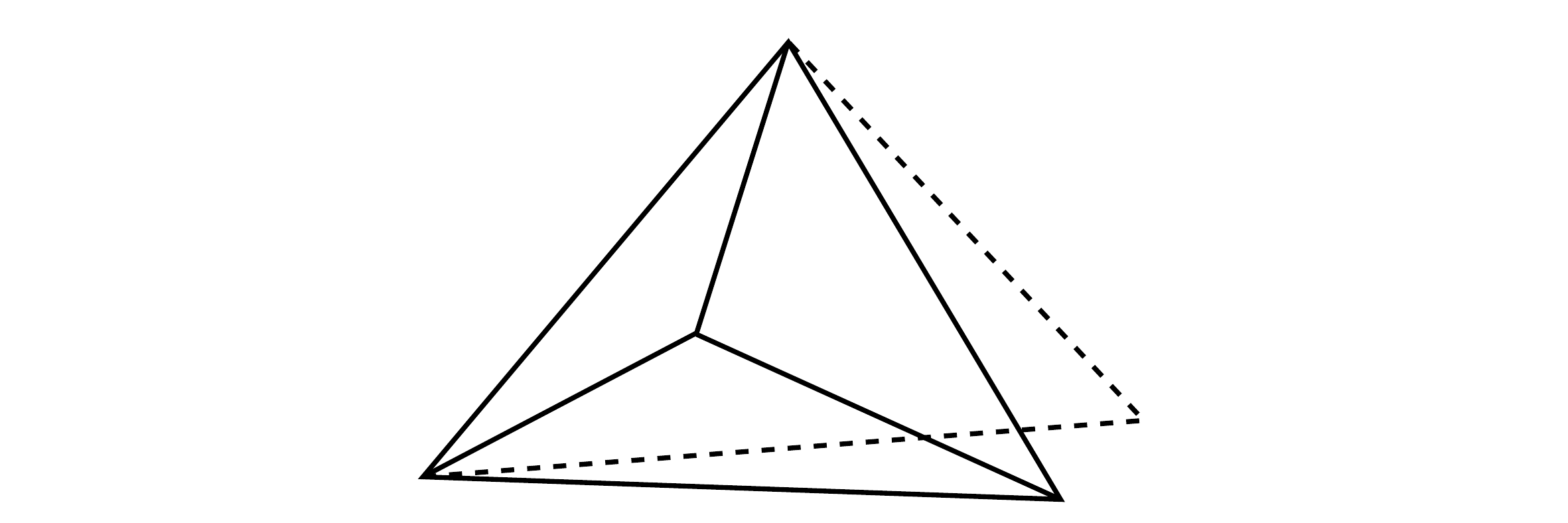}}%
    \put(0.48086338,0.3187627){\color[rgb]{0,0,0}\makebox(0,0)[lt]{\lineheight{0}\smash{\begin{tabular}[t]{l}$\bar{n}_3(n_3)$\end{tabular}}}}%
    \put(0.16981861,0.02235058){\color[rgb]{0,0,0}\makebox(0,0)[lt]{\lineheight{0}\smash{\begin{tabular}[t]{l}$\bar{n}_1(n_1)$\end{tabular}}}}%
    \put(0.4617413,0.12371754){\color[rgb]{0,0,0}\makebox(0,0)[lt]{\lineheight{0}\smash{\begin{tabular}[t]{l}$\bar{\xi}(\xi)$\end{tabular}}}}%
    \put(0.68999029,0.00832901){\color[rgb]{0,0,0}\makebox(0,0)[lt]{\lineheight{0}\smash{\begin{tabular}[t]{l}$\bar{n}_2$\end{tabular}}}}%
    \put(0.73556819,0.06671909){\color[rgb]{0,0,0}\makebox(0,0)[lt]{\lineheight{0}\smash{\begin{tabular}[t]{l}$n_2$\end{tabular}}}}%
    \put(0,0){\includegraphics[width=\unitlength,page=2]{2_dtriangle.pdf}}%
  \end{picture}%
\endgroup%

	\endgroup

    \caption{2-dimensional examples}
    \label{fig:2_dtriangle}
\end{figure}

Recall the spherical law of cosines says
\[
	\cos \left|\widefrown{n_1n_2}\right|=
	\cos \left|\widefrown{n_1n_3}\right|
	\cos \left|\widefrown{n_2n_3}\right|+
	\sin \left|\widefrown{n_1n_3}\right|
	\sin \left|\widefrown{n_2n_3}\right|
	\cos \measuredangle (n_1n_3n_2).
\]

Hence, if we increase the length of $\widefrown{n_1n_2}$, we know the angle $\measuredangle (n_1n_3n_2)$ is increasing, too.
So we know the angle $\measuredangle (\xi n_3n_2)$ is increasing. Again by spherical law of cosines, we know $\left|\widefrown{\xi n_2}\right|$ should increase.
This shows $\left|\widefrown{\xi n_2}\right|>\left|\widefrown{\bar{\xi}\bar{n}_2}\right|$.

To show $\xi \in \mathring {\boldsymbol{D}}^\dag$, we write the length of $\widefrown{n_1n_2}$ as $\left|\widefrown{\bar{n}_1\bar{n}_2}\right| +t$ for some $t\ge 0$.
Clearly, if $t=0$, we know $\xi$ is in the interior of $\boldsymbol{D}^\dag$.
Let $t_0$ be the smallest $t$ such that $\xi$ is on the boundary of $\boldsymbol{D}^\dag$.
For such $t$, we know the only possible way is that $\xi$ locates on the arc $\widefrown{n_1n_2}$.
This implies
\[
	\left|\widefrown{n_1n_2}\right|=
	\left|\widefrown{n_1 \xi}\right|+
	\left|\widefrown{\xi n_2}\right|>
	\left|\widefrown{\bar{n}_1 \bar{\xi}}\right|+
	\left|\widefrown{\bar{\xi}\bar{n}_2}\right|.
\]

This inequality contradicts the condition (\ref{eq_lem_angle_cond}).
Hence $\xi$ cannot touch the boundary of $\boldsymbol{D}^\dag$.
Now, since $\xi$ is in the interior of $\boldsymbol{D}^\dag$, we can adjust $\xi$ a bit to make sure
	\[
		\left|\widefrown{\xi n_1}\right|>
		\left|\widefrown{\bar{\xi}\bar{n}_1}\right|, \left|\widefrown{\xi n_2}\right|>
		\left|\widefrown{\bar{\xi}\bar{n}_2}\right|, \left|\widefrown{\xi n_3}\right|>
		\left|\widefrown{\bar{\xi}\bar{n}_3}\right|.
	\]
\end{proof}

\begin{proof}[Proof of Proposition \ref{prop_pyramid_another_condition}]
	If $\boldsymbol{C}$ is not isometric to $\bar{\boldsymbol{C}}$, we know there exists at least one $j$ such that $\theta_j<\bar{\theta}_j$.

	Define $\boldsymbol{C}^k$ to be the 3-faced polyhedral cones with $\theta^k_j=\theta_j$ if $j\le k$ and $\theta^k_j=\bar{\theta}_j$ where $\theta^k_j$ is the dihedral angles of $\boldsymbol{C}^k$.

	Then, we can find $\xi^k$ such that
	\[
		\left|\widefrown{\xi^k n_l}\right|>
		\left|\widefrown{\xi^{k-1}n_l}\right|
	\]
	for any $1\le l,k\le 3$ based on Lemma \ref{lem_pyramid_one_side}. Here we have chosen $\xi^0=\bar{\xi}$.

	Hence, $\xi=\xi^3$ is the vector we want.
\end{proof}

\subsection{An example}%
\label{sub:some_examples}

Now we give an example to explain
why $k>3$ is difficult to Conjecture \ref{conj_rig_pyramid}.
We consider the example with $k=4$ here.

\begin{example}\label{tetrahedron example}
Given two constant $\beta_1,\beta_2 \in (0,\frac{\pi}{2})$, we define
$\boldsymbol{C}=\boldsymbol{C}^{\beta_1,\beta_2}$ by choosing
\begin{align*}
	n_1={}&(\sin \beta_1,0, -\cos \beta_1),\\
	n_2={}& (0, \sin \beta_2, -\cos \beta_2) ,\\
	n_3={}& (-\sin \beta_1,0,-\cos \beta_1), \\
	n_4={}& (0,-\sin \beta_2,-\cos \beta_2).
\end{align*}
\end{example}

\begin{proposition}
  If $\cos \beta_1\cos \beta_2=\cos \beta_1'\cos \beta_2'$ for some $\beta_1,\beta_2,\beta_1',\beta_2' \in (0,\frac{\pi }{2})$, then the polyhedral cones $\boldsymbol{C}^{\beta_1,\beta_2}$ and $\boldsymbol{C}^{\beta_1',\beta_2'}$ have the same dihedral angles but are not isometric to each other if $\beta_1\neq \beta_1'$ and $\beta_1\neq \beta_2'$.
  \label{prop_non_isometric_cone}
\end{proposition}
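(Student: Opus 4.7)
The plan is twofold: verify equality of dihedral angles by a direct dot-product computation, and then rule out isometry by pinning down the canonical rotational symmetry axis of each cone.

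For the first half, I use the convention $\cos \theta_j = -n_j \cdot n_{j+1}$ (since the dihedral angle is the supplement of the angle between outward normals). Pairing cyclically consecutive normals, exactly one of the two $x$-components and one of the two $y$-components is zero, so only the $z$-slot contributes and
\[
n_1 \cdot n_2 = n_2 \cdot n_3 = n_3 \cdot n_4 = n_4 \cdot n_1 = \cos \beta_1 \cos \beta_2.
\]
Hence all four dihedral angles of $\boldsymbol{C}^{\beta_1, \beta_2}$ equal $\arccos(-\cos \beta_1 \cos \beta_2)$, and likewise for $\boldsymbol{C}^{\beta_1', \beta_2'}$. The hypothesis $\cos \beta_1 \cos \beta_2 = \cos \beta_1' \cos \beta_2'$ then gives the claimed dihedral-angle equality.

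For the second half, suppose for contradiction $\Omega \in O(3)$ with $\Omega(\boldsymbol{C}^{\beta_1', \beta_2'}) = \boldsymbol{C}^{\beta_1, \beta_2}$; the plan is to show $\Omega(e_3) = e_3$ and then compare the $z$-components of the normals. Each cone is invariant under reflections through the $xz$- and $yz$-planes and under the $\pi$-rotation about the $z$-axis, and every edge $u_j = n_j \times n_{j+1}$ has strictly positive $z$-component, so the cone extends to infinity in the $+e_3$ direction. The line $\mathbb{R} e_3$ is then the unique rotational-symmetry axis of the cone (a $C_2$ axis when $\beta_1 \neq \beta_2$, a $C_4$ axis when $\beta_1 = \beta_2$), and the $+e_3$ half-axis is the unique opening direction. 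Since these are intrinsic data, $\Omega$ must fix $e_3$.

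Consequently $\Omega$ preserves the $z$-coordinate of every vector, and permuting outward normals it must identify the multisets of their $z$-components:
\[
\{-\cos \beta_1, -\cos \beta_1, -\cos \beta_2, -\cos \beta_2\} = \{-\cos \beta_1', -\cos \beta_1', -\cos \beta_2', -\cos \beta_2'\}.
\]
Injectivity of $\cos$ on $(0, \pi/2)$ then forces $\{\beta_1, \beta_2\} = \{\beta_1', \beta_2'\}$, so either $\beta_1 = \beta_1'$ or $\beta_1 = \beta_2'$, contradicting both hypotheses. The main obstacle is the axis-preservation step $\Omega(e_3) = e_3$; the only subtlety is the degenerate case $\beta_1 = \beta_2$, where the rotation group jumps from $C_2$ to $C_4$ but $\mathbb{R} e_3$ is still the unique axis, after which everything reduces to an elementary comparison.
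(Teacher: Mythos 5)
Your proof is correct and reaches the same conclusion, but the non-isometry half takes a genuinely different route from the paper. The paper's argument passes to the dual spherical polygon $\boldsymbol{D}^\dag = n_1n_2n_3n_4$, observes that it is a spherical rhombus (all four side lengths $\arccos(\cos\beta_1\cos\beta_2)$), and notes that its two diagonals have lengths $2\beta_1$ and $2\beta_2$ (since $n_1\cdot n_3=\cos 2\beta_1$, $n_2\cdot n_4=\cos 2\beta_2$); two spherical rhombuses with the same side but different diagonal multisets cannot be congruent, and congruence of the dual polygons is equivalent to isometry of the cones. You instead work directly with the cones in $\mathbb{R}^3$: you pin down the symmetry axis $\mathbb{R}e_3$, argue that an orthogonal map $\Omega$ conjugating one cone to the other must send axis to axis and opening ray to opening ray (hence $\Omega e_3=e_3$), and then read off a contradiction from the multiset of $z$-components of the outward normals. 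The paper's route buys brevity and avoids analyzing the symmetry group; yours buys a self-contained argument that never leaves the original cone picture.

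One step in your write-up deserves a word of justification: the assertion that $\mathbb{R}e_3$ is the \emph{unique} rotational symmetry axis (so that $\Omega$ must fix it). This is true but is being asserted rather than proved. A quick way to close the gap without a case analysis of the permutation on the normals: the centroid of the four outward unit normals is $-\tfrac{1}{2}(\cos\beta_1+\cos\beta_2)\,e_3$, a nonzero negative multiple of $e_3$, and the same for the primed cone. Since $\Omega$ must carry one normal set to the other and is linear, it carries centroid to centroid, hence $\Omega e_3 = \lambda e_3$ with $\lambda>0$, and orthogonality forces $\lambda=1$. This replaces the symmetry-group discussion with a two-line computation and also sidesteps the $\beta_1=\beta_2$ versus $\beta_1\neq\beta_2$ case split that you flagged as a subtlety.
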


\begin{proof}
    Note $n_i\cdot n_{i+1}=\cos \beta_1\cos \beta_2=\cos \beta_1'\cos \beta_2'=n'_i\cdot n_{i+1}'$ where $n_i'$ are the face vectors of $\boldsymbol{C}^{\beta_1',\beta_2'}$.
    Hence, they have the same dihedral angles.
    
    Geometrically speaking, $\boldsymbol{D}^\dag$ and $\boldsymbol{D}'^\dag$ are spherical rhombuses and they have the same side length but have different interior angles.
    Clearly, they are not congruent with each other.
\end{proof}

\begin{figure}[h!]
    \centering
	\begingroup
	\fontsize{9pt}{12pt}
	\def\svgwidth{0.8\columnwidth}
\begingroup%
  \makeatletter%
  \providecommand\color[2][]{%
    \errmessage{(Inkscape) Color is used for the text in Inkscape, but the package 'color.sty' is not loaded}%
    \renewcommand\color[2][]{}%
  }%
  \providecommand\transparent[1]{%
    \errmessage{(Inkscape) Transparency is used (non-zero) for the text in Inkscape, but the package 'transparent.sty' is not loaded}%
    \renewcommand\transparent[1]{}%
  }%
  \providecommand\rotatebox[2]{#2}%
  \newcommand*\fsize{\dimexpr\f@size pt\relax}%
  \newcommand*\lineheight[1]{\fontsize{\fsize}{#1\fsize}\selectfont}%
  \ifx\svgwidth\undefined%
    \setlength{\unitlength}{680.31496063bp}%
    \ifx\svgscale\undefined%
      \relax%
    \else%
      \setlength{\unitlength}{\unitlength * \real{\svgscale}}%
    \fi%
  \else%
    \setlength{\unitlength}{\svgwidth}%
  \fi%
  \global\let\svgwidth\undefined%
  \global\let\svgscale\undefined%
  \makeatother%
  \begin{picture}(1,0.33333333)%
    \lineheight{1}%
    \setlength\tabcolsep{0pt}%
    \put(0,0){\includegraphics[width=\unitlength,page=1]{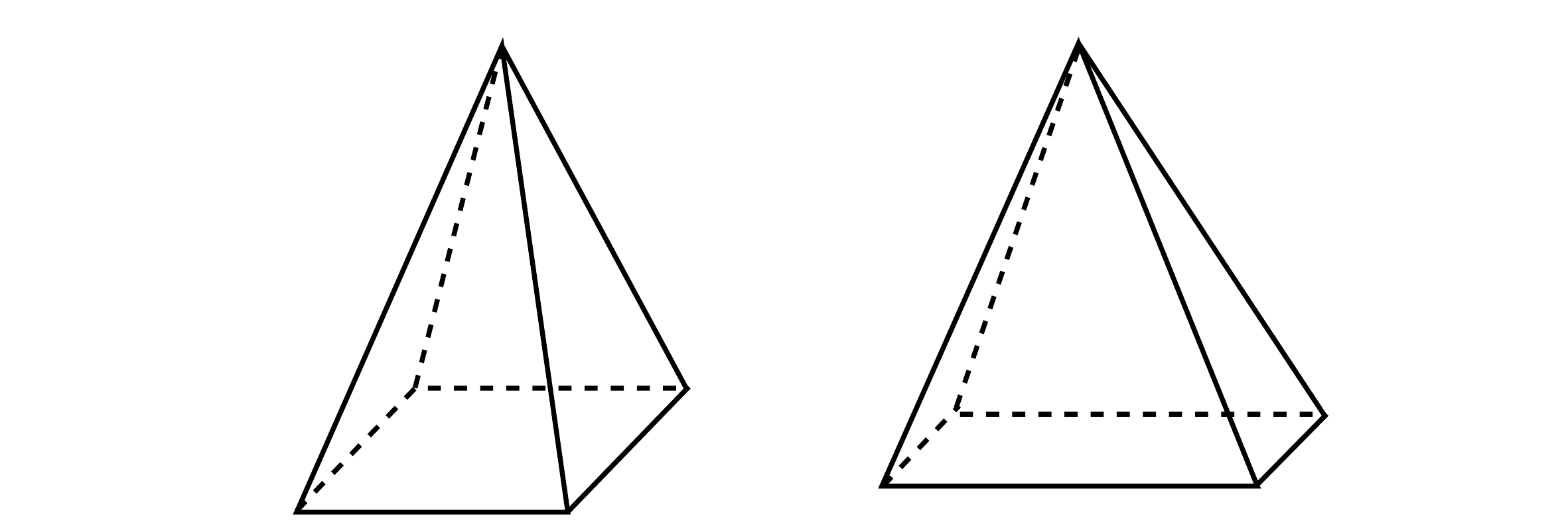}}%
    \put(0.18619524,0.19726707){\color[rgb]{0,0,0}\makebox(0,0)[lt]{\lineheight{0}\smash{\begin{tabular}[t]{l}$\bar{P}$\end{tabular}}}}%
    \put(0.59837115,0.20202102){\color[rgb]{0,0,0}\makebox(0,0)[lt]{\lineheight{0}\smash{\begin{tabular}[t]{l}$P$\end{tabular}}}}%
    \put(0.32596954,0.31241796){\color[rgb]{0,0,0}\makebox(0,0)[lt]{\lineheight{0}\smash{\begin{tabular}[t]{l}$O$\end{tabular}}}}%
    \put(0.68079374,0.31241796){\color[rgb]{0,0,0}\makebox(0,0)[lt]{\lineheight{0}\smash{\begin{tabular}[t]{l}$O$\end{tabular}}}}%
  \end{picture}%
\endgroup%

	\endgroup

    \caption{Two pyramids with the same dihedral angles between their side faces}
    \label{fig:two-pyramids}
\end{figure}

Proposition \ref{prop_non_isometric_cone} shows, when we know two polyhedral cones have the same dihedral angles between their adjacent faces, we cannot conclude these two polyhedral cones are isometric. See Figure \ref{fig:two-pyramids} for this example.


Another hard part is that the results similar to Lemma \ref{lem_pyramid_one_side} and Lemma \ref{lem_pyramid_another_stat} no longer hold.

\begin{proposition}
  There exist two $4$-faced polyhedral cones $\boldsymbol{C}, \bar{\boldsymbol{C}}$ and a pyramid $\bar{P}=\bar{P}_{\bar{\xi},\bar{\boldsymbol{C}}}$ with following conditions.
  \begin{itemize}
      \item $\theta_j<\bar{\theta}_j$, for $1\le j\le 4$ ($\boldsymbol{C}$ has less dihedral angle than $\bar{\boldsymbol{C}}$).
      \item $|\pi-(\bar{\gamma}_j+\bar{\gamma}_{j+1})|<\theta_j$, for $1\le j\le 4$.
  \end{itemize}
  Then we cannot find a unit vector $\xi\in \mathring{\boldsymbol{C}^*}$ such that the pyramid $P_{\xi,\boldsymbol{C}}$ satisfies
  $$
  \gamma_j\ge \bar{\gamma}_j,\quad \text{ for all }1\le j\le 4.
  $$
  \label{prop_quad_1}
\end{proposition}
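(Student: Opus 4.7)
The plan is to construct an explicit counterexample within the family $\boldsymbol{C}^{\beta_1,\beta_2}$ of Example \ref{tetrahedron example}, where the obstruction becomes visible as soon as one breaks the symmetry between $\beta_1$ and $\beta_2$. Specifically, I would take $\bar{\boldsymbol{C}}=\boldsymbol{C}^{\pi/4,\pi/4}$, $\boldsymbol{C}=\boldsymbol{C}^{\pi/6,\pi/3}$, and $\bar{\xi}=(0,0,-1)$. By the $4$-fold symmetry of $\bar{\boldsymbol{C}}$, $\bar{\xi}$ lies in the interior of $\bar{\boldsymbol{C}}^*$, so $\bar{P}=\bar{P}_{\bar{\xi},\bar{\boldsymbol{C}}}$ is a genuine pyramid with $\bar{\gamma}_j=\pi/4$ for every $j$. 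Condition (1) is immediate: $\cos\theta_j=-\cos\beta_1\cos\beta_2=-\sqrt{3}/4>-1/2=\cos\bar{\theta}_j$, so $\theta_j<\bar{\theta}_j$. Condition (2) reads $|\pi-(\bar{\gamma}_j+\bar{\gamma}_{j+1})|=\pi/2<\theta_j$, which holds because $\cos\theta_j<0$.

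The crux is ruling out any $\xi=(\xi_1,\xi_2,-\alpha)\in\mathring{\boldsymbol{C}^*}$ with $\gamma_j\ge\pi/4$ for every $j$. Computing $\cos\gamma_j=\xi\cdot n_j$ using the symmetric structure of the $n_j$ collapses the four angular conditions $\cos\gamma_j\le\sqrt{2}/2$ to
\[
|\xi_1|+\sqrt{3}\alpha\le\sqrt{2},\qquad \sqrt{3}|\xi_2|+\alpha\le\sqrt{2}.
\]
A direct calculation of the edges $u_j=n_j\times n_{j+1}/|n_j\times n_{j+1}|$ yields $u_j\propto(\pm 3,\pm 1,\sqrt{3})$, and combining the four conditions $\xi\cdot u_j\le 0$ across sign choices collapses the polar-cone membership into the single inequality $3|\xi_1|+|\xi_2|\le\sqrt{3}\alpha$. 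I would then isolate $|\xi_1|\le(\sqrt{3}\alpha-|\xi_2|)/3$ and substitute into $\xi_1^2+\xi_2^2$; the resulting function of $t=|\xi_2|$ is convex on the admissible interval $[0,(\sqrt{2}-\alpha)/\sqrt{3}]$ and thus maximized at an endpoint. The required impossibility $\xi_1^2+\xi_2^2<1-\alpha^2$ then reduces to the two quadratic checks $\alpha^2/3<1-\alpha^2$ and $52\alpha^2-26\sqrt{2}\alpha-7<0$, both of which hold throughout the admissible range $\alpha\in(0,\sqrt{6}/3]$ (the upper endpoint forced by $|\xi_1|\ge 0$ in the first angular constraint, the lower endpoint by $\xi_3<0$ on the interior of $\boldsymbol{C}^*$). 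This contradicts $|\xi|=1$.

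The main obstacle is bookkeeping the combined linear and quadratic inequalities carefully; conceptually, the reason no clean analog of Lemma \ref{lem_pyramid_one_side} can be hoped for is that for a $4$-faced cone the system of four conditions $\widefrown{\xi n_j}\ge\widefrown{\bar{\xi}\bar{n}_j}$ is overdetermined relative to the two degrees of freedom for $\xi\in\mathbb{S}^2$ once $\boldsymbol{C}$ is made asymmetric.
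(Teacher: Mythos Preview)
Your argument is correct. Both you and the paper exploit the family $\boldsymbol{C}^{\beta_1,\beta_2}$ of Example~\ref{tetrahedron example} with $\bar\xi=(0,0,-1)$, but the executions differ. The paper takes $\bar\beta_1=\bar\beta_2$ and $\beta_1,\beta_2$ with $\cos\beta_1\cos\beta_2=\cos\bar\beta_1\cos\bar\beta_2$ all small, so that the dual spherical quadrilaterals $\boldsymbol{D}^\dag,\bar{\boldsymbol{D}}^\dag$ approximate a thin Euclidean rhombus and a square of equal side length; the impossibility is then read off from the obvious planar picture (Figure~\ref{fig:two-quad}). You instead fix the concrete parameters $(\pi/6,\pi/3)$ versus $(\pi/4,\pi/4)$ and reduce the non-existence of $\xi$ to two explicit quadratic inequalities in $\alpha=-\xi_3$. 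Your route has the minor advantage of verifying the strict condition $\theta_j<\bar\theta_j$ directly (the paper's choice literally gives $\theta_j=\bar\theta_j$ and an implicit perturbation is needed to match the statement of Proposition~\ref{prop_quad_1}); the paper's route makes the geometric reason for the obstruction more transparent. One bookkeeping remark: the true admissible range for $t=|\xi_2|$ is $[0,\min(\sqrt{3}\alpha,(\sqrt{2}-\alpha)/\sqrt{3})]$, since the bound $|\xi_1|\le(\sqrt{3}\alpha-t)/3$ coming from the polar-cone condition presupposes $t\le\sqrt{3}\alpha$; but your two endpoint checks already hold on the larger interval $[0,(\sqrt{2}-\alpha)/\sqrt{3}]$ for every $\alpha\in(0,\sqrt{6}/3]$, so by convexity of $f$ no gap arises.
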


\begin{proposition}
  There exist two spherical convex quadrilaterals $\boldsymbol{D}^\dag=n_1n_2n_3n_4$ and $\bar{\boldsymbol{D}}^\dag=\bar{n}_1\bar{n}_2\bar{n}_3\bar{n}_4$, and a unit vector $\bar{\xi} \in \mathring{\bar{\boldsymbol{D}}}^\dag$ with the following conditions.
  \begin{itemize}
      \item $\left|\widefrown{n_jn_{j+1}}\right|\ge \left|\widefrown{\bar{n}_j\bar{n}_{j+1}}\right|$, for $1\le j\le 4$.
      \item
      $\left|\widefrown{n_jn_{j+1}}\right|<\left|\widefrown{\bar{\xi}\bar{n}_j}\right|+\left|\widefrown{\bar{\xi}\bar{n}_{j+1}}\right|$, for $1\le j\le 4$.
  \end{itemize}
  Then, we cannot find $\xi \in \mathring{\boldsymbol{D}}^\dag$ such that
  \begin{equation}
      \left|\widefrown{\xi n_j}\right|\ge \left|\widefrown{\bar{\xi}\bar{n}_j}\right|, \quad
  \text{ for all }1\le j\le 4.
  \label{eq_prop_length}
  \end{equation}
  \label{prop_quad_2}
\end{proposition}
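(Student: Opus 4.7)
The plan is to construct an explicit counterexample within the rhombus family of Example~\ref{tetrahedron example}. Fix $\bar\beta \in (0,\pi/2)$ and take $\bar{\boldsymbol{C}} = \boldsymbol{C}^{\bar\beta,\bar\beta}$, whose dual spherical quadrilateral $\bar{\boldsymbol{D}}^\dag$ is a spherical square, together with $\boldsymbol{C} = \boldsymbol{C}^{\beta_1,\beta_2}$ satisfying $\cos\beta_1\cos\beta_2 = \cos^2\bar\beta$ and $\beta_1 \ne \beta_2$. By Proposition~\ref{prop_non_isometric_cone}, the two duals $\boldsymbol{D}^\dag$ and $\bar{\boldsymbol{D}}^\dag$ share the common side length $\arccos(\cos^2\bar\beta)$, so the first hypothesis of the proposition holds with equality. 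Take $\bar\xi = (0,0,-1)$, the axis of symmetry of $\bar{\boldsymbol{D}}^\dag$; then $\bar\xi \in \mathring{\bar{\boldsymbol{D}}}^\dag$, $|\widefrown{\bar\xi \bar n_j}| = \bar\beta$ for every $j$, and the second hypothesis reduces to $\arccos(\cos^2\bar\beta) < 2\bar\beta$, equivalent to $\cos^2\bar\beta < 1$, which is automatic.

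The remaining task is to show that no $\xi \in \mathring{\boldsymbol{D}}^\dag$ satisfies $\xi \cdot n_j \le \cos\bar\beta$ for all $j$. In the explicit coordinates of Example~\ref{tetrahedron example}, a direct computation identifies $\mathring{\boldsymbol{D}}^\dag$ with $\{\xi \in \mathbb{S}^2 : \xi_3 < 0, \, |\xi_i| < |\xi_3|\tan\beta_i, \ i=1,2\}$. Writing $t = -\xi_3$ and grouping the four half-space constraints $\xi \cdot n_j \le \cos\bar\beta$ into the two opposite-face pairs, the admissible values of $(|\xi_1|, |\xi_2|)$ must lie in the rectangle with side lengths
\[
a(t) = \min\!\bigl(t\tan\beta_1,\ (\cos\bar\beta - t\cos\beta_1)/\sin\beta_1\bigr), \qquad b(t) = \min\!\bigl(t\tan\beta_2,\ (\cos\bar\beta - t\cos\beta_2)/\sin\beta_2\bigr),
\]
while the unit-length condition forces $|\xi_1|^2 + |\xi_2|^2 = 1 - t^2$. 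Hence infeasibility is equivalent to the strict inequality $a(t)^2 + b(t)^2 < 1 - t^2$ on the full admissible range $t \in (0,\, \cos\bar\beta/\cos\beta_2]$.

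The idea guiding the parameter choice is to take $\beta_2$ small, so that $\boldsymbol{D}^\dag$ becomes narrow in the $\xi_2$-direction, pushing $|\xi_2|$ toward $0$ and $|\xi_1|$ toward $\sqrt{1-t^2}$; at the extremal $t \approx \cos\bar\beta/\cos\beta_2 \approx \cos\bar\beta$ the constraint from $n_1$ then degenerates to $\cos(\beta_1 - \bar\beta) \le \cos\bar\beta$, i.e., $\beta_1 \ge 2\bar\beta$, contradicting $\beta_1 \le \arccos(\cos^2\bar\beta) < 2\bar\beta$. A concrete choice that realizes this heuristic as an actual proof is $\bar\beta = \pi/4$, $\beta_2 = \pi/6$, $\beta_1 = \arccos(1/\sqrt 3)$; the minima in $a(t)$ and $b(t)$ switch at $t = \sqrt 6/6$ and $t = \sqrt 6/4$, so on each of the three subintervals of $(0,\sqrt 6/3]$ the difference $a(t)^2 + b(t)^2 - (1-t^2)$ is a quadratic in $t$ whose negativity is confirmed by evaluation at the endpoints and the interior critical point. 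The main obstacle, routine in nature, is this three-piece case analysis; no new idea is required beyond the heuristic above.
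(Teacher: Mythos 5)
Your construction does prove the proposition, and it takes a more explicit route than the paper. The paper works with the same rhombus family $\boldsymbol{C}^{\beta_1,\beta_2}$ of Example \ref{tetrahedron example}, but argues softly: it takes $\bar\beta_1=\bar\beta_2$ small and $\beta_1/\beta_2$ large, so that $\bar{\boldsymbol{D}}^\dag$ and $\boldsymbol{D}^\dag$ become close to a Euclidean square and a rhombus with one very short diagonal, and then transfers the obvious planar obstruction; the ``sufficiently small/sufficiently close'' comparison is left implicit. You instead fix concrete parameters ($\bar\beta=\pi/4$, $\beta_2=\pi/6$, $\beta_1=\arccos(1/\sqrt3)$, so $\cos\beta_1\cos\beta_2=\cos^2\bar\beta$) and reduce the obstruction to a finite three-piece quadratic check; I verified it: on the pieces cut by $t=\sqrt6/6$ and $t=\sqrt6/4$ the function $a(t)^2+b(t)^2-(1-t^2)$ equals $\tfrac{10}{3}t^2-1$, $\tfrac{11}{6}t^2-\sqrt{3/2}\,t-\tfrac14$ and $\tfrac92 t^2-\bigl(\sqrt{3/2}+2\sqrt6\bigr)t+\tfrac74$, each increasing or convex, with values $-\tfrac49$, $-\tfrac{5}{16}$, $-\tfrac14$ at the break points $\sqrt6/6,\sqrt6/4,\sqrt6/3$, so it is negative on all of $(0,\sqrt6/3]$, and for $t>\sqrt6/3$ the $n_2,n_4$ constraints are already empty. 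The hypotheses of the proposition hold as you say (side lengths equal, and the triangle-type condition reduces to $\arccos(\cos^2\bar\beta)<2\bar\beta$). So your argument is quantitative where the paper's is asymptotic.

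One genuine error, though it happens to be harmless: your identification of $\mathring{\boldsymbol{D}}^\dag$ with $\{\xi\in\mathbb{S}^2:\xi_3<0,\ |\xi_i|<|\xi_3|\tan\beta_i,\ i=1,2\}$ is wrong. Since $\boldsymbol{C}^*=\mathrm{face}(u_1,\dots,u_4)$ with $u_j\propto n_j\times n_{j+1}=(\pm\cos\beta_1\sin\beta_2,\pm\sin\beta_1\cos\beta_2,\sin\beta_1\sin\beta_2)$, the correct description is $|\xi_1|\cot\beta_1+|\xi_2|\cot\beta_2<-\xi_3$ (a ``diamond'' cross-section); what you wrote is the larger intersection of two lunes, essentially the shape of the primal cone $\boldsymbol{C}$, not of its polar. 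Consequently your statement that infeasibility ``is equivalent to'' $a(t)^2+b(t)^2<1-t^2$ should read ``is implied by'': the inequality certifies that no admissible $\xi$ exists even in the larger lune region, hence a fortiori none in $\mathring{\boldsymbol{D}}^\dag$. Since that is exactly the direction the proposition needs, the proof stands after this rewording, but you should correct the description (or simply state that $\mathring{\boldsymbol{D}}^\dag$ is contained in that set) rather than assert equality.
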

\begin{proof}
[Proof of Proposition \ref{prop_quad_1} and Proposition \ref{prop_quad_2}]

Since Proposition \ref{prop_quad_1} and Proposition \ref{prop_quad_2} are equivalent, we only need to show Proposition \ref{prop_quad_2}.

Note that if we choose $n_i$ such that $\left|\widefrown{n_jn_{j+1}}\right|\ge \left|\widefrown{\bar{n}_j\bar{n}_{j+1}}\right|$, for $1\le j\le 4$, then automatically the second condition holds.
Hence, we can use $\boldsymbol{C}^{\beta_1,\beta_2}$ to construct such examples.
In other words, we always choose $\beta_j$ and $\bar{\beta}_j$ such that
$\cos \beta_1\cos \beta_2=\cos \bar{\beta}_1\cos \bar{\beta}_2$.

Now, we consider a Euclidean geometry problem.
Fix a square $\bar{S}=\bar{A}_1\bar{A}_2\bar{A}_3\bar{A}_4
\subset \mathbb{R}^2$ with center $O$, we consider a rhombus $S=A_1A_2A_3A_4$ having the same side length with $\bar{S}$.
If we choose the rhombus such that one of the diagonals is much shorter than its side length, then clearly, we cannot find a point $P$ in the interior of $S$ such that
$$|PA_j|\ge |O\bar{A}_j|, \quad 
\text{ for all }1\le j\le 4.$$
See Figure \ref{fig:two-quad} for the shape of $\bar{S}$ and $S$.
\begin{figure}[h!]
    \centering
	\begingroup
	\fontsize{9pt}{12pt}
	\def\svgwidth{0.8\columnwidth}
	\import{./figures/}{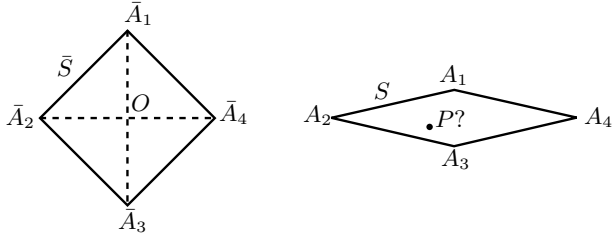}
	\endgroup

    \caption{A square $\bar{S}$ and a rhombus $S$}
    \label{fig:two-quad}
\end{figure}

Now we can choose  $\beta_1>\beta_2$ and $\bar{\beta}_1=\bar{\beta}_2$ sufficient small and $\frac{\beta_1}{\beta_2}$ sufficient large such that the shape of $\bar{\boldsymbol{D}}^\dag$ and $\boldsymbol{D}^\dag$ are sufficiently close to $\bar{S}$ and $S$, respectively.

For such choice of $n_i$ and $\bar{n}_i$, we cannot find $\xi \in \mathring{\boldsymbol{D}}^\dag$ satisfying \eqref{eq_prop_length}.
\end{proof}

\section{Dihedral non-rigidity}\label{sec:dihedral non-rigid}

In this section, we give proof of Theorem \ref{dihedral non-rigidity}. The proof dates back to the work of Schoen-Yau of the positive mass theorem \cite{schoen-proof-1979}.

\begin{proof}[Proof of Theorem \ref{dihedral non-rigidity}]
We argue by contradiction and assumes that $\measuredangle_{ij}M < \measuredangle_{ij}\bar{P}$. Using Theorem \ref{thm_regularity}, the minimiser to the problem \eqref{action} is achieved by $E$ such that $E$ is an open set and $\Sigma=\partial E \cap \mathring{M}$ is $C^{1,\alpha}$ to the corner. So $\Sigma$ is a stable capillary surface with constant mean curvature
  $- 2$. By the second variation formula, we have
  \begin{equation}
    \int_{\Sigma} [| \nabla f|^2 - (|A|^2 +\ensuremath{\operatorname{Ric}}(N))
    f^2] - \int_{\partial \Sigma} Q f^2 \geq 0 \label{stability
    inequality}
  \end{equation}
  for any function compactly supported away from the corners, where on
  $\partial \Sigma \cap F_j$,
  \begin{equation*}
    Q = \tfrac{1}{\sin \bar{\gamma }_j} {A_{\partial M}} (\tilde{\nu}, \tilde{\nu}) - A (\nu, \nu) \cot
    \bar{\gamma }_j .
  \end{equation*}
  Since $\Sigma$ is $C^{1, \alpha}$ to its corners, its curvature is $|A|$ is
  square integrable. By approximation, we can take the function $f$ in the
  stability inequality to be $f \equiv 1$ which gives
\begin{align*}
- & \int_{\Sigma} [(|A|^2 +\ensuremath{\operatorname{Ric}}(N))] -
\int_{\partial \Sigma} [\tfrac{1}{\sin \bar{\gamma }_j} {A_{\partial M}} (\tilde{\nu}, \tilde{\nu})
- A (\nu, \nu) \cot \bar{\gamma }_j] \geq 0.
\end{align*}
  Applying the Gauss equation (see {\cite{schoen-proof-1979}}), we have
  \begin{equation*}
    |A|^2 +\ensuremath{\operatorname{Ric}} (N) = \tfrac{1}{2} (R_g - 2 K +
    |A|^2 + H^2)
  \end{equation*}
  where $R_g$ is the scalar curvature of $M$ and $K_{\Sigma}$ is the Gauss
  curvature of $\Sigma$.
  
  Since $H = - 2$ and $A = A^0 + \tfrac{1}{2} H g$ where $A^0$ denote the
  trace free part of $A$, the above turns into
  \begin{equation}
    |A|^2 +\ensuremath{\operatorname{Ric}} (N) = \tfrac{1}{2} (R + 6) -
    K_{\Sigma} + \tfrac{1}{2} |A^0 |^2 .\label{acg rewrite}
  \end{equation}
  The above formula is not new, see for example
  {\cite{andersson-rigidity-2008}}. By the Gauss-Bonnet formula for $C^{1,
  \alpha}$ surfaces with piecewise smooth boundary components, we have that
  \begin{equation*}
    \int_{\Sigma} K + \int_{\partial \Sigma} \kappa + \sum_{j = 1}^k (\pi -
    \alpha_j) = 2 \pi\chi (\Sigma) \leq 2 \pi,
  \end{equation*}
  here $\kappa$ is the geodesic curvature of $\partial \Sigma$ in $\Sigma$ and
  $\alpha_j$ are the interior angles of $\Sigma$ at the corners. By \cite[Lemma 3.2]{li-polyhedron-2020},
  $\alpha_j < \bar{\alpha}_j$ where $\bar{\alpha}_j$ is the corresponding interior angle
  of the base face of the polyhedron $\bar{P}$. Since
  \[ \sum_{j = 1}^k (\pi - \alpha_j) > \sum_{j = 1}^k (\pi - \bar{\alpha}_j) = 2
     \pi, \]
  we have that
  \begin{equation*}
    - \int_{\Sigma} K > \int_{\partial \Sigma} \kappa .
  \end{equation*}
  We conclude that 
  \begin{equation}
    \tfrac{1}{2} \int_{\Sigma} (R_g + 6 + |A^0 |^2) + \sum_{j = 1}^k
    \int_{\partial \Sigma} [\tfrac{1}{\sin \bar{\gamma }_j} {A_{\partial M}} (\tilde{\nu}, \tilde{\nu})
    - A (\nu, \nu) \cot \bar{\gamma }_j + \kappa_g] < 0. \label{boundary not
    processed}
  \end{equation}
  Since $\Sigma$ is of mean curvature $- 2$, we have $A (\nu, \nu) = - 2 - A
  (T, T)$. So
\begin{align*}
& \cos \bar{\gamma } A (\nu, \nu) + \kappa \sin \bar{\gamma } \\
= & 2 \cos \bar{\gamma } - \cos \bar{\gamma } A (T, T) + \langle \nabla_T \nu, T \rangle
\sin \bar{\gamma } \\
= & 2 \cos \bar{\gamma } - \langle \nabla_T T, \cos \bar{\gamma } N + \sin \bar{\gamma } \nu
\rangle \\
= & 2 \cos \bar{\gamma } - \langle \nabla_T T, X \rangle \\
= & 2 \cos \bar{\gamma } + A_{\partial M}(T, T) .
\end{align*}
  Since $T$ and $\tilde{\nu}$ form an orthonormal basis of $\partial M$, we have
  that
  \begin{equation}
    A_{\partial M}(\tilde{\nu}, \tilde{\nu}) - \cos \bar{\gamma } A (\nu, \nu) + \kappa \sin \bar{\gamma } =
    2 \cos \bar{\gamma } + {H_{\partial M}} . \label{hyperoblic contribution of the boundary}
  \end{equation}
  From \eqref{boundary not processed},
  \begin{equation*}
    \tfrac{1}{2} \int_{\Sigma} (R_g + 6 + |A^0 |^2) + \sum_{j = 1}^k
    \int_{\partial \Sigma} \tfrac{1}{\sin \bar{\gamma }_j} (H_j + 2 \cos
    \bar{\gamma }_j) < 0,
  \end{equation*}
  where $H_j=H_{F_j}$,
  contradicting that the scalar curvature $R_g \geq - 6$ and the mean
  curvature $H_j \geq - 2 \cos \bar{\gamma }_j$ of face $F_j$.
\end{proof}

\section{Rigidity}\label{sec:rigidity}
In this section, we show the rigidity statement Theorem \ref{rigidity} via constructing a foliation near the minimiser of \eqref{action in terms of cacciopolli}. We have two cases to consider: the case that the minimiser is an infinitesimally rigid surface and the case that the minimiser is  the vertex.
\subsection{Infinitesimally rigid CMC capillary surfaces}

Assume that the minimiser of \eqref{action} exists. Tracing back the
equalities in the proof of Theorem \ref{dihedral non-rigidity}, we conclude
that
\begin{equation}
  \chi (\Sigma) = 0, \text{ } R_M = - 6, \text{ } |A^0 | = 0 \text{ on }
  \Sigma, \label{interior rigid}
\end{equation}
and
\begin{equation}
  H_j = - 2 \cos \bar{\gamma }_j \text{ on } \partial \Sigma, \text{ }
  \alpha_j = \bar{\alpha}_j \text{ at the corners of } \Sigma . \label{boundary
  corner rigid}
\end{equation}
Moreover by the second variation formula \eqref{second variation},
\begin{equation*}
  \mathcal{Q} (f, f) = - \int_{\Sigma} f \Delta f + (|A|^2
  +\ensuremath{\operatorname{Ric}}(N)) f^2 + \sum_{j = 1}^k \int_{\partial
  \Sigma \cap F_j} f (\tfrac{\partial f}{\partial \nu} - Q f),
\end{equation*}
with $\mathcal{Q} (1, 1) = 0$. We can conclude that for any $C^2$ function $f$ compactly
supported away from the vertices of $\Sigma$, $\mathcal{Q} (1, f) = 0$. By choosing
an appropriate $f$, we further conclude that
\begin{equation*}
  \ensuremath{\operatorname{Ric}} (N) = - 2 \text{ on } \Sigma, \text{ }
  \tfrac{1}{\sin \bar{\gamma }_j} {A_{\partial M}} (\tilde{\nu}, \tilde{\nu}) - \cot \bar{\gamma }_j A (\nu,
  \nu) = 0 \text{ on } \partial \Sigma \cap F_j .
\end{equation*}
Combining with \eqref{acg rewrite} and \eqref{hyperoblic contribution of the boundary}, we conclude that
\begin{equation}
  K_{\Sigma} = 0 \text{ in } \Sigma, \text{ } \kappa_g = 0 \text{ on }
  \partial \Sigma . \label{intrisic rigid}
\end{equation}
We call a surface $\Sigma$ satisfying \eqref{interior rigid}, \eqref{boundary
corner rigid} and \eqref{intrisic rigid} {\itshape{infinitesimally rigid}}. Note that such
a surface is isometric to a flat $k$-polygon in $\mathbb{R}^2$.

\subsection{Foliation near infinitesimally rigid capillary surfaces}

Take a vector field $Y$ near $\Sigma$ such that $Y$ is tangential to $\partial
M$. Let $\phi (x, t)$ be the flow of $Y$.

\begin{proposition}\label{foliation away from vertex}
  Let $\Sigma$ be a properly embedded, two-sided, minimal capillary surface in
  $M$. If $\Sigma$ is infinitesimally rigid, then there exists $\varepsilon >
  0$ and a function $w : \Sigma \times (- \varepsilon, \varepsilon) \to
  \mathbb{R}$ such that for every $t \in (- \varepsilon, \varepsilon)$, the
  set
  \[ \Sigma_t = \{\phi (x, w (x, t)) : x \in \Sigma\} \]
  is a capillary surface with constant mean curvature $H (t)$ that meets $F_j$
  at constant angle $\bar{\gamma }_j$. Moreover, for every $x \in \Sigma$ and every
  $t \in (- \varepsilon, \varepsilon)$,
  \[ w (x, 0) = 0, \text{ } \int_{\Sigma} (w (x, t) - t) = 0 \text{ and }
     \tfrac{\partial}{\partial t} w (x, 0) = 1. \]
  By possibly choosing a smaller $\varepsilon$, $\{\Sigma_t \}_{t \in (-
  \varepsilon, \varepsilon)}$ is a foliation of a neighbourhood of $\Sigma_0 =
  \Sigma$ in M.
\end{proposition}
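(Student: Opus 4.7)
The plan is to prove Proposition \ref{foliation away from vertex} by the implicit function theorem applied to a map whose zero set parametrizes CMC capillary surfaces near $\Sigma$, exploiting the infinitesimal rigidity to make the linearization invertible. Since $\Sigma$ meets $\partial M$ only along the side faces $F_j$ and does so at fixed angles $\bar{\gamma}_j$, a natural ansatz is $\Sigma_u := \{\phi(x,u(x)) : x \in \Sigma\}$ for small $u \in C^{2,\alpha}(\Sigma)$. Define
\begin{equation*}
\mathcal{F}(u,t) = \Bigl( H(\Sigma_u) - \tfrac{1}{|\Sigma|}\textstyle\int_{\Sigma_u} H,\; \cos\Theta_j(\Sigma_u) - \cos\bar{\gamma}_j,\; \tfrac{1}{|\Sigma|}\int_\Sigma u - t \Bigr),
\end{equation*}
where $\Theta_j$ denotes the contact angle along $\partial\Sigma_u \cap F_j$. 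Then $\mathcal{F}(0,0) = 0$, and a zero $(w(\cdot,t),t)$ of $\mathcal{F}$ provides a surface of constant mean curvature $H(t)$ meeting each $F_j$ at angle $\bar{\gamma}_j$ with the desired volume normalization.

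The next step is to linearize at $(0,0)$. Using the variational formulas from the excerpt (Appendix \ref{variation of mean curvature and contact angle}) together with the infinitesimal rigidity identities derived in Section \ref{sec:rigidity}, namely $|A|^2 + \operatorname{Ric}(N) = 0$ on $\Sigma$ and $Q = 0$ on $\partial\Sigma$, the linearization $L := D_u\mathcal{F}(0,0)$ simplifies dramatically to
\begin{equation*}
L\,v \;=\; \Bigl( -\Delta v + \tfrac{1}{|\Sigma|}\textstyle\int_{\partial\Sigma}\tfrac{\partial v}{\partial\nu},\; \tfrac{\partial v}{\partial\nu}\big|_{\partial\Sigma\cap F_j},\; \tfrac{1}{|\Sigma|}\int_\Sigma v \Bigr).
\end{equation*}
Its kernel consists of $v$ with $\Delta v = 0$, $\partial v/\partial \nu = 0$, $\int v = 0$, which forces $v\equiv 0$; surjectivity reduces to solving a Neumann problem for $-\Delta$ whose only compatibility condition is absorbed by the free additive constant in the first slot. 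Thus $L$ is a Fredholm isomorphism between the appropriate Banach spaces.

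The implicit function theorem then produces $w(x,t) \in C^{2,\alpha}(\Sigma \times (-\varepsilon,\varepsilon))$ with $w(\cdot,0)=0$ and $\mathcal{F}(w(\cdot,t),t)=0$. Differentiating this identity at $t=0$ gives $L(\partial_t w(\cdot,0)) = -D_t\mathcal{F}(0,0) = (0,0,1)$; feeding this through the characterization of $L$ and integrating by parts shows $\partial_t w(\cdot,0)$ is a harmonic function with vanishing Neumann data and mean $1$, hence $\partial_t w(\cdot,0) \equiv 1$. Consequently $|\partial_t w| > 0$ in a neighborhood of $t=0$, so up to shrinking $\varepsilon$ the surfaces $\Sigma_t$ foliate a neighborhood of $\Sigma$, and the integral normalization $\int_\Sigma(w(\cdot,t)-t)=0$ follows from the third component of $\mathcal{F}$.

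The main obstacle is the presence of corners along $\partial\Sigma$, where two side faces $F_j$, $F_{j+1}$ meet $\Sigma$: classical Schauder theory for Neumann-type boundary value problems needs to be adapted to this setting. The redeeming feature, crucially provided by infinitesimal rigidity \eqref{boundary corner rigid}, is that the interior angles $\alpha_j = \bar{\alpha}_j$ of $\Sigma$ at the corners agree with those of the flat model polygon, so $\Sigma$ is intrinsically a flat $k$-gon. This permits one to set up the IFT either in weighted Hölder spaces adapted to the corner exponents (in the spirit of Simon's corner regularity argument invoked in Theorem \ref{thm_regularity}), or, equivalently, on the doubled/reflected surface where the corner angles unfold compatibly; in either formulation the Jacobi operator is a Fredholm isomorphism onto the space of admissible right-hand sides and all subsequent computations are justified.
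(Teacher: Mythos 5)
Your proposal is correct and takes essentially the same approach as the paper: the paper states this proposition without printing a proof (it is the standard argument from Li's Euclidean case), and its detailed proof of the analogous Theorem \ref{foliation near vertex} uses exactly your scheme --- an implicit function theorem for the pair (mean curvature minus its average, contact-angle deviation) with a mean-value normalization, where infinitesimal rigidity ($|A|^2+\operatorname{Ric}(N)=0$ on $\Sigma$ and $Q=0$ on $\partial\Sigma$) collapses the linearization to the Neumann Laplacian on the flat polygon, made an isomorphism in corner-adapted H\"older spaces via Lieberman's estimates. The only point to tighten is the bookkeeping of target spaces (the first slot should land in zero-average $C^{\alpha}$, which is what makes the Neumann compatibility automatic, with the additive constant fixed by the third slot), but that is a wording issue rather than a gap.
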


\subsection{Foliation near the vertex}\label{sec:foliation near vertex}
To obtain a foliation near the vertex is more delicate than Proposition \ref{foliation away from vertex}. Let $(M^3, g)$ be a cone type polyhedron with vertex $p$ with the flat reference $\bar{P}$, if the vertex is a minimiser to \eqref{action in terms of cacciopolli}, from the discussions at the beginning of Section \ref{cone type polyhedron} and Proposition \ref{prop_energy_est}, we conclude when $\bar{P}$ is a tetrahedron that
 the metric of the tangent cone of $M$ at $p$ is  conformal to $\delta$ (or a constant multiple of $\delta$). In particular, 
\begin{equation}
  \measuredangle{F_j F_{j + 1}} = \measuredangle{\bar{F}_j \bar{F}_{j + 1}} \label{angle condition}
\end{equation}
where $F_j$ are side faces of $M$, $F_j = \varphi (\bar{F}_j)$ and $\varphi$ is the diffeomorphism from the reference polyhedron $\bar{P}$ to $M$. 
For general cone polyhedra, this conformality is only true provided we confirmed Conjecture \ref{minimiser existence conjecture}. Note also because of Example \ref{tetrahedron example}, \eqref{angle condition} is a weaker condition than that tangent cone  $T_{\bar{p}}\bar{P}$ and the $T_pM$ are conformal.

But to obtain a foliation near the vertex, we can always assume that the tangent cone $T_pM$ is conformal to the $T_{\bar{p}}\bar{P}$ whether $\bar{P}$ is a tetrahedron or not. And after rescaling of $\bar{P}$, we can further assume they are isometric. We obtain the following.

\begin{theorem}
  \label{foliation near vertex}Let $(M^3, g)$ be a cone type Riemannian
  polyhedron with vertex $p$. Assume that there is a diffeomorphism
  \begin{equation}
    \varphi : \bar{P} \to M
  \end{equation}
  where $\bar{P} \subset \mathbb{R}^3$ is a cone type polyhedron with vertex $\bar{p} =
  \varphi^{- 1} (p)$, and the tangent cones $(T_p M, g_p)$ and $(T_{\bar{p}} \bar{P},
  \delta)$ are isometric. Denote $\bar{\gamma }_i$ be the angles between the
  base face and side faces of $\bar{P}$. Then there exists a small neighbourhood $U$
  of $p$ in $M$ such that $U$ is foliated by surfaces $\{\Sigma_{\rho}
  \}_{\rho \in (0, \varepsilon)}$ with the properties that
  \begin{enumerate}
    \item for each $\rho \in (0, \varepsilon)$, $\Sigma_{\rho}$ meet the side
    face $F_j$ at constant angle $\bar{\gamma }_j$;
    
    \item each $\Sigma_{\rho}$ has constant mean curvature $\lambda_{\rho}$.
  \end{enumerate}
\end{theorem}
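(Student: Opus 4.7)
The plan is to reduce to a perturbation of the flat model foliation via a blowup at $p$, and then apply an implicit function theorem in appropriate weighted Hölder spaces on a polygonal domain. The hypothesis that $(T_pM, g_p)$ is isometric to $(T_{\bar{p}}\bar{P}, \delta)$ means that after pulling back via $\varphi$, the metric $\tilde{g} := \varphi^{*}g$ on $\bar{P}$ agrees with $\delta$ at $\bar{p}$ (up to a linear change absorbed into $\varphi$). Let $\mu_\rho : x \mapsto \bar{p} + \rho(x - \bar{p})$ be dilation about $\bar{p}$, and set $\tilde{g}_\rho := \rho^{-2}\mu_\rho^{*}\tilde{g}$. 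As $\rho \to 0$, $\tilde{g}_\rho$ converges in $C^k_{\mathrm{loc}}$ to $\delta$ on any compact subset of the tangent cone $T_{\bar{p}}\bar{P}$. In this flat tangent cone, the explicit foliation is given by the translates of the flat polygon $\bar{\Sigma} := \{\vec{a}\cdot x = 1\} \cap T_{\bar{p}}\bar{P}$; this polygon is minimal and meets every side face at exactly the angle $\bar{\gamma}_j$.

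For each small $\rho > 0$ I would seek a surface $\Sigma_\rho \subset (T_{\bar{p}}\bar{P}, \tilde{g}_\rho)$ of the form
\begin{equation*}
\Sigma_\rho = \{\exp_x^{\tilde{g}_\rho}(u_\rho(x) N(x)) : x \in \bar{\Sigma}\}
\end{equation*}
solving the coupled system $H_{\tilde{g}_\rho}(u_\rho) = \lambda_\rho$ together with the contact-angle condition $\measuredangle(\Sigma_\rho, F_j) = \bar{\gamma}_j$, with the normalization $\int_{\bar{\Sigma}} u_\rho = 0$ and $\lambda_\rho \in \mathbb{R}$ a free parameter. The implicit function theorem is applied to the map $(\rho, u, \lambda) \mapsto (H(u) - \lambda, \,\text{angle defect})$ at $(0, 0, 0)$. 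At $\rho = 0$ the linearization in $(u, \lambda)$ reduces, by \eqref{acg rewrite} and \eqref{hyperoblic contribution of the boundary}, to
\begin{equation*}
\Delta u - \lambda = \phi \text{ on } \bar{\Sigma}, \qquad \partial_\nu u = \psi_j \text{ on } \bar{\Sigma} \cap \bar{F}_j,
\end{equation*}
since $|A|^2$, $\operatorname{Ric}(N)$ and the capillary operator $Q$ all vanish on the flat minimal polygon with constant angle. The Neumann Laplacian on a flat polygon has one-dimensional kernel spanned by constants; imposing $\int_{\bar{\Sigma}} u = 0$ and letting $\lambda$ absorb the mean consistent with the divergence-theorem compatibility $\int\phi = \sum_j \int \psi_j + \lambda |\bar{\Sigma}|$ turns the linearization into an isomorphism onto the appropriate slice.

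This yields, for each small $\rho$, a unique solution $(u_\rho, \lambda_\rho)$ with $u_\rho \to 0$ in norm; transporting back by $\mu_\rho$ and $\varphi$ produces the family $\{\Sigma_\rho\}_{\rho\in(0,\varepsilon)}$ in $M$. The foliation property follows because $\Sigma_\rho$ is an $O(\rho)$ perturbation of $\varphi$ applied to $\{\vec{a}\cdot x = \vec{a}\cdot\bar{p} + \rho\} \cap \bar{P}$, hence the family remains transverse to the radial direction and sweeps out a neighborhood of $p$. The main obstacle I anticipate is the corner analysis: $\bar{\Sigma}$ has corners where adjacent side faces of the cone meet, and one must set up weighted Hölder spaces (or a $b$-calculus variant) in which the linearized Neumann problem with prescribed corner angles $\bar{\alpha}_j$ is an isomorphism after quotienting the kernel, and in which the nonlinear mean curvature and contact-angle operators are smooth uniformly in $\rho$. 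The exact matching of the corner angles between $\bar{\Sigma}$ and $\Sigma_\rho$ is precisely what the assumed isometry of tangent cones provides, and this is the point at which our argument avoids the implicit conformality hypothesis that Li's Euclidean proof tacitly used.
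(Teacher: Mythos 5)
Your overall strategy is the same as the paper's: rescale the metric at $p$ via the coordinate chart provided by $\varphi$, use the isometry of tangent cones to force $C^0$-convergence to the flat model, and run an implicit (or inverse) function theorem whose linearization at $\rho=0$ is the Neumann Laplacian on the flat polygon $\bar\Sigma$, with the free constant $\lambda$ absorbing the divergence-theorem compatibility condition and the one-dimensional kernel of constants being removed by the normalization $\int_{\bar\Sigma} u = 0$. Your closing observation about why the tangent-cone isometry hypothesis is the key input, and why geodesic normal coordinates would not suffice, is exactly the point the paper makes in its remark preceding the proof.

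There is, however, a genuine gap in your parametrization. You write the candidate surface as the normal graph
\begin{equation*}
\Sigma_\rho=\{\exp_x^{\tilde g_\rho}(u_\rho(x)N(x)):x\in\bar\Sigma\},
\end{equation*}
where $N$ is the unit normal of $\bar\Sigma$, i.e.\ the constant vector $\vec a$ in the flat model. But $N$ is not tangent to the side faces $\bar F_j$ of the cone: at a boundary point $x\in\bar\Sigma\cap\bar F_j$ one has $N\cdot n_j=\pm\cos\bar\gamma_j\neq0$ unless $\bar\gamma_j=\tfrac{\pi}{2}$. Consequently $\partial\Sigma_\rho$ leaves $\partial M$ as soon as $u_\rho\neq 0$ on $\partial\bar\Sigma$, and the contact-angle equation $\measuredangle(\Sigma_\rho,F_j)=\bar\gamma_j$ cannot even be posed for this family. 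The paper sidesteps this by parametrizing along the radial flow $\phi(x,t)=xe^t$ centered at the vertex; since every side face of the cone is radial, the variation field is tangent to $\partial M$ automatically and $\partial\Sigma_{\rho,u}\subset\partial M$ for every $u$. To make your normal-graph setup work you would need to add a boundary-tangential correction field so that the deformed surface stays inside $M$ with boundary on the side faces, and this correction enters the linearized boundary operator, so it is not a purely cosmetic change. Once a boundary-adapted vector field is used (the radial one being the most convenient choice here), the remaining steps you sketch — mixed/weighted Hölder spaces on the polygon in the spirit of Lieberman, isomorphism of the linearization after imposing $\int u = 0$, the conclusion $u_\rho = o(\rho)$, and transversality of the leaves — match the paper's argument.
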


\begin{remark}\label{li gap foliation}
  In general, there is no 
  diffeomorphism from $\bar{P}$ to $M$ near $p$ with the pullback metric $C^1$ close
  to the Euclidean metric. In other words, the geodesic normal coordinates do not work at the vertex for our purpose. So we use the coordinates provided by the diffeomorphism of $(M,g)$ to the reference polyhedron $\bar{P}$.
\end{remark}

\begin{proof}
  Let $\bar{\boldsymbol{C}}=T_{\bar{p}}\bar{P}$ be the tangent cone of $\bar{P}$ at its apex.
  Hence $\bar{\boldsymbol{C}}$ is a polyhedral cone in $\mathbb{R}^3$.
  Without loss of generality, we suppose $\bar{P}$ is given by
  \[
    \bar{P}=\{(x_1,x_2,x_3)\in \mathbb{R}^3:
    x_1\ge -t\}\quad \text{for some }t>0.
  \]
  
  Note that $\varphi$ already gives us a coordinate chart for $M$.
  Since $(T_pM, g_p)$ and $\bar{\boldsymbol{C}}$ are isometric, we know metric $g$ is the standard Euclidean metric $\delta$ at $p$.
  
  We define
  \[ \Sigma_{\rho} = \{x=(x_1,x_2,x_3) \in \bar{\boldsymbol{C}} : x_1=-\rho\} . \]
  Let $\phi (x, t) = x \mathrm{e}^t$ and
  \begin{equation*}
    \Sigma_{\rho, u} = \{\rho x \mathrm{e}^{u (x, \rho)} : x \in \Sigma_1 \} .
  \end{equation*}
  Let $y \in \Sigma_{\rho}$ and $\sigma_{i j}^{\rho} (x) = g_{i j} (\rho x)$
  on $\Sigma_1$. We write $\sigma = \sigma^{\rho}$ for convenience. Note that
  $x = \tfrac{y}{\rho} \in \Sigma_1$ and $\tfrac{\partial}{\partial y_i} =
  \tfrac{1}{\rho} \tfrac{\partial}{\partial x_i}$. By Taylor expansion of the
  mean curvature and the contact angle under metric $g$, we see by \eqref{first variation mean
  curvature} and \eqref{first variation of contact angle} that
\begin{align}
H_{\rho, u} - H_{\rho} & = - \tfrac{1}{\rho^2} \Delta_{\sigma} u -
(\ensuremath{\operatorname{Ric}}(N_{\rho}) + |A_{\rho} |^2) u + L_1 u +
Q_1 (u), \label{taylor mean} \\
\langle X_{\rho, u}, N_{\rho, u} \rangle - \langle X_{\rho}, N_{\rho}
\rangle & = - \tfrac{\sin \gamma_j}{\rho} \tfrac{\partial u}{\partial
\nu_{\sigma}} + (- \cos \gamma_j A (\nu_{\rho}, \nu_{\rho}) + A_{\partial M}(\tilde{\nu}_{\rho}, \tilde{\nu}_{\rho})) u \nonumber\\
& \quad + L_2 u + Q_2 (u) . \label{taylor angle}
\end{align}
  Here, $H_{\rho,u}$ ($H_\rho$ resp.) are the mean curvature of surface $\Sigma_{\rho,u}$ ($\Sigma_\rho$ resp.) under metric $g$, and
  
  \begin{align*}
  \Delta_{\sigma} u : ={}& \sum_{i \neq 1, j \neq 1} \tfrac{1}{\sqrt{\det
    \sigma (x)}} \tfrac{\partial}{\partial x_i} (\sqrt{\det \sigma (x)}
    \sigma^{i j} (x) \tfrac{\partial}{\partial x_j} u (x))\\
 \nu_{\sigma} (x) := {}&\sum_{i \neq 1} \nu_{\rho}^i (\rho x)
    \tfrac{\partial}{\partial x^i},
  \end{align*}
  and \eqref{taylor mean} and \eqref{taylor angle} are valid since we see $u$
  as a function of $y$, $\tfrac{1}{\rho^2} \Delta_{\sigma} u = \Delta_{\rho}
  u$ and $\tfrac{1}{\rho} \tfrac{\partial u}{\partial \nu_{\sigma}} = \sum_{i
  \neq 1} \nu_{\rho}^i (y) \tfrac{\partial u}{\partial y^i}$.
  
  The functions $L_1$ and $L_2$ measure how the mean curvature and the contact
  angle deviate from being constant. They satisfy the bounds
  \begin{align*}
      |L_1 | \leq {}&| \nabla_{\rho} H_{\rho} |  |Y| \leq C |g|_{C^2}
    \leq C,\\
    |L_2 | \leq {}&| \nabla_{\rho} \langle X_{\rho}, N_{\rho} \rangle |  |Y|
    \leq C |g|_{C^1} \leq C.
  \end{align*}
  Letting $\rho \to 0$, we see that $\Delta_{\sigma}$ and $\nu_{\sigma}$
  converges to Euclidean Laplacian and Euclidean normal which we denote by
  $\Delta$ and $\nu$ respectively.
  
  Denote $D_{\rho} = \langle X_{\rho}, N_{\rho} \rangle - \cos
  \bar{\gamma }_j=\cos \gamma_j-\cos \bar{\gamma}_j$. We define the operators
\begin{align*}
\mathcal{L}_{\sigma} u & = - \Delta_{\sigma} u + \rho^2 [-
(\ensuremath{\operatorname{Ric}}(N_{\rho}) + |A_{\rho} |^2) u + L_1 u +
Q_1 (u)] + \rho^2 H_{\rho}, \\
\mathcal{B}_{\sigma} u & = \sin \gamma_j \tfrac{\partial u}{\partial
\nu_{\sigma}} - \rho D_{\rho} - \rho [(- \cos \gamma_j A (\nu_{\rho},
\nu_{\rho}) + A_{\partial M}(\tilde{\nu}_{\rho}, \tilde{\nu}_{\rho})) u + L_2 u + Q_2
(u)],
\end{align*}
  and the Banach spaces
\begin{align*}
\mathcal{X} & = \{u \in C^{2, \alpha} (\Sigma_1) \cap C^{1, \alpha}
(\bar{\Sigma}_1)\}, \\
\mathcal{Y} & = \{u \in C^{\alpha} (\Sigma_1) : \int_{\Sigma_1} u = 0\},
\\
\mathcal{Z} & = \{u \in L^{\infty} (\partial \Sigma_1) : u_{| \bar{L}_j}
\in C^{0, \alpha} (\bar{L}_j)\} .
\end{align*}
  Here the integration is with respect to the limiting metric,
  \[ \lim_{\rho \to 0} \sigma^{\rho} = \lim_{\rho \to 0} g_{i j} (\rho x) =
     \sigma^0 \]
  and the limiting metric is really the metric of the tangent cone at $p \in
  M$.
  
  For a small $\delta > 0$, let $\Psi : (- \varepsilon, \varepsilon) \times
  B_{\delta} (0) \to \mathcal{Y} \times \mathcal{Z}$
  \[ \Psi (\rho, u) = \left(\mathcal{L}_{\sigma} u - \tfrac{1}{| \Sigma_1 |}
     \int_{\Sigma_1} \mathcal{L}_{\sigma} (u), \mathcal{B}_{\sigma} u\right), \]
  where $B_{\delta} (0) \subset \mathcal{X}$ is an open neighborhood of the
  zero function. The linearized operator
  \[ D \Psi_{| (0, 0)} (0, v) = \tfrac{\mathrm{d}}{\mathrm{d} s}_{|s = 0} \Psi
     (0, s v) . \]
  By the limiting behaviour of $\mathcal{L}_{\sigma}$ and
  $\mathcal{B}_{\sigma}$ as $\rho \to 0$, we find that
  \[ D \Psi_{| (0, 0)} (0, v) = \left(\Delta v - \int_{\Sigma_1} \Delta v, \sin
     \bar{\gamma}_j \tfrac{\partial v}{\partial \nu}\right) . \]
  Following from {\cite{lieberman-optimal-1989}} (or {\cite[Theorem
  4.2]{li-polyhedron-2020}}), we see that $D \Psi_{| (0, 0)}$ is an isomorphism
  when restricted to $\{0\} \times \mathcal{X}$. We, therefore, apply the
  inverse function theorem and conclude that for sufficiently small
  $\varepsilon > 0$, there exists a $C^1$ map from $(- \varepsilon,
  \varepsilon)$ to $B_{\delta} (0) \subset \mathcal{X}$ such that
  \[ \Psi (\rho, u (\cdot, \rho)) = 0,\quad u (\cdot, \rho) \in B_{\delta} (0) \]
  for every $\rho \in (- \varepsilon, \varepsilon)$. Now we fix such $u (\cdot
  {,} \rho)$, thus $\Sigma_{\rho, u (\cdot, \rho)}$ is of constant mean
  curvature and meets each $F_j$ at constant angles $\bar{\gamma}_j$ by
  \eqref{taylor mean} and \eqref{taylor angle}.
  
  Let $\lambda_{\rho} = H_{\rho, u (\cdot, \rho)}$ we obtain the theorem.
  Indeed, by the construction of $\Sigma_{\rho, u (\cdot, \rho)}$ we have
\begin{align}
- \Delta_{\sigma} u + \rho^2 [- (\ensuremath{\operatorname{Ric}}(N_{\rho})
+ |A_{\rho} |^2) u + L_1 u + Q_1 (u) + H_{\rho} - \lambda_{\rho}] & = 0
\text{ in } \Sigma_1, \label{laplace} \\
\sin \gamma_j \tfrac{\partial u}{\partial \nu_{\sigma}} - \rho D_{\rho} -
\rho [- \cos \gamma_j A (\nu_{\rho}, \nu_{\rho}) u + {A_{\partial M}}
(\tilde{\nu}_{\rho}, \tilde{\nu}_{\rho}) u + L_2 u + Q_2 u] & = 0 \text{ on }
\partial \Sigma_1 . \label{normal derivative }
\end{align}
  By definition $u (\cdot, 0)$ is the zero function. Let $v = \tfrac{\partial
  u (\cdot, \rho)}{\partial \rho}$. Differentiating the above with respect to
  to $\rho$ and evaluating at $\rho = 0$, we deduce that
  \[ \Delta v = 0 \text{ in } \Sigma_1, \tfrac{\partial v}{\partial \nu} = 0
     \text{ on } \partial \Sigma_1 . \]
  So $v$ is a constant. By $u (\cdot, \rho) \in \mathcal{X}$, we have that $v
  \in \mathcal{X}$ as well, so $v \equiv 0$ on $\Sigma_1$. So we conclude
  \begin{equation}
    |u (\cdot, \rho) |_{C^{1, \alpha} (\bar{\Sigma}_1)} = o (\rho)
    \label{order of u}
  \end{equation}
  for $\rho$ small by elliptic regularity of {\cite{lieberman-optimal-1989}}
  (or {\cite[Theorem 4.2]{li-polyhedron-2020}}). Therefore, $\Sigma_{\rho, u
  (\cdot, \rho)}$ is a foliation of a small neighbourhood of $p$.
\end{proof}

\begin{lemma}
  \label{mean curvature bound near vertex}Let $\Sigma_{\rho, u (\cdot, \rho)}$
  and $\lambda_{\rho}$ be constructed above, then
  \[ \lim_{\rho \to 0} \lambda_{\rho} \geq - 2. \]
\end{lemma}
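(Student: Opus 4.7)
The plan is to integrate the elliptic equation \eqref{laplace} over $\Sigma_1$ against $d\mu_\sigma$, apply the divergence theorem to the Laplacian term, and substitute the boundary condition \eqref{normal derivative } for $\partial u/\partial \nu_\sigma$. This yields
\[
\rho^2 \lambda_\rho |\Sigma_1|_\sigma
= \rho^2 \int_{\Sigma_1} H_\rho\, d\mu_\sigma
- \sum_j \int_{\partial \Sigma_1 \cap \bar F_j} \frac{\rho D_\rho}{\sin \gamma_j}\, d\ell_\sigma
+ E(\rho),
\]
where the error $E(\rho)$ collects all terms quadratic or higher in $u$. The bound $E(\rho) = o(\rho^2)$ follows from the estimate $u(\cdot, \rho) = o(\rho)$ in $C^{1,\alpha}$ provided by \eqref{order of u}, together with the vanishing $\partial_\rho u|_{\rho=0}=0$ that was established during the proof of Theorem \ref{foliation near vertex}.

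Next, I would pass to the limit $\rho \to 0$. The isometry $(T_pM, g_p) \simeq (T_{\bar p}\bar P, \delta)$ implies that the rescaled metrics $\sigma = \sigma^\rho$ converge smoothly to $\delta$ on $\Sigma_1$. Under the scaling $y = \rho x$, the mean curvatures satisfy $H_\sigma(\Sigma_1) = \rho H_\rho$, and since $\Sigma_1 \subset \bar{\boldsymbol{C}}$ is a flat Euclidean plane with $H_\delta(\Sigma_1) = 0$, one obtains $\rho H_\rho \to 0$; an analogous Taylor expansion of the contact angle on each boundary edge gives $\rho D_\rho \to 0$. Consequently $\lambda_\rho$ admits a finite limit as $\rho \to 0$, explicitly determined by the first jet of $g$ at $p$.

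The main obstacle is to show this limit is at least $-2$. My approach is to exploit the CMC-$\lambda_\rho$ analog of the boundary identity \eqref{hyperoblic contribution of the boundary}, which by repeating the derivation there takes the form
\[
A_{\partial M}(\tilde\nu, \tilde\nu) - \cos\bar\gamma_j\, A(\nu, \nu) + \kappa \sin\bar\gamma_j = H_{F_j} - \lambda_\rho \cos\bar\gamma_j,
\]
so that after adding $2\cos\bar\gamma_j$ to both sides the boundary contribution acquires the favourable factor $H_{F_j} + 2\cos\bar\gamma_j \geq 0$ plus a $(\lambda_\rho + 2)\cos\bar\gamma_j$ correction. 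Combined with the Gauss equation \eqref{acg rewrite} on each leaf $\Sigma_{\rho, u(\cdot, \rho)}$ and a Gauss-Bonnet accounting of the near-flat polygonal leaf, following the lines of the proof of Theorem \ref{dihedral non-rigidity}, the hypotheses $R_g \geq -6$ and $H_{F_j} \geq -2\cos\bar\gamma_j$ control the bulk and boundary integrands respectively, producing the sign condition $\liminf_{\rho\to 0}\lambda_\rho \geq -2$. The delicate step is controlling how the corner defects $\sum_j(\pi - \alpha_j)$ behave as $\Sigma_{\rho, u(\cdot, \rho)}$ degenerates onto a flat polygon at $p$, and verifying that the $o(\rho^2)$ error does not interfere with the sign at the threshold.
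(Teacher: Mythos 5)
Your first half is exactly the paper's computation: integrating \eqref{laplace} over $\Sigma_1$, using the boundary condition for $\partial u/\partial\nu_\sigma$ and the estimate $|u(\cdot,\rho)|_{C^{1,\alpha}}=o(\rho)$ from \eqref{order of u} to arrive at the expression \eqref{asymptotic of lambda rho} for $\lambda_\rho$ up to $o(1)$, whose limit is a first-variation quantity determined by the first jet of $g$ at $p$. Up to that point the proposal is fine.

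The gap is in the main step. The leaves $\Sigma_{\rho,u(\cdot,\rho)}$ are produced by the implicit function theorem; they are not minimizers of \eqref{action} and carry no stability inequality. The argument of Theorem \ref{dihedral non-rigidity} that you propose to imitate starts from $\mathcal{Q}(1,1)\ge 0$, and it is only this sign that lets one trade $\operatorname{Ric}(N)+|A|^2$ for $\tfrac12(R_g+6)-K_\Sigma+\tfrac12|A^0|^2$ via \eqref{acg rewrite} and then invoke Gauss--Bonnet. Without stability, $R_g\ge -6$ gives no control on $\operatorname{Ric}(N)$ along the leaf, and the modified boundary identity you write down (the analog of \eqref{hyperoblic contribution of the boundary} with $H=\lambda_\rho$) is an identity, not an inequality; so no sign condition on $\lambda_\rho+2$ comes out. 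The only leafwise argument in the paper that avoids stability is Proposition \ref{mean curvature ode}, but that yields a differential inequality $-H'(\rho)\ge -C(\rho)(H(\rho)+2)$ and needs the value of $H$ at the vertex as initial data --- which is precisely what this lemma must supply, so that route is circular. The paper's actual mechanism is different: it recognizes the limit of \eqref{asymptotic of lambda rho} as the linearization, under the perturbation $h=x^k\partial_{y^k}g_{ij}(0)$, of a polyhedral Miao--Piubello quantity, proves the localized identity \eqref{local miao-piubello} (the linearized scalar curvature of a perturbation linear in $x$ contributes only $O(\rho^2)$), and thereby converts the base-face integral into integrals of $H_\sigma$ over the side faces and of $\gamma-\bar\gamma$ over the remaining edges, where the hypotheses $H\ge$ reference and $\gamma\le\bar\gamma$ finally give signs; the exact constant $-2$ then comes from the Euclidean projection identity \eqref{area projection}, $\int_{\partial\Omega_1\setminus\Sigma_1}\cos\bar\gamma_j=|\Sigma_1|_{\sigma^0}$, which your proposal never uses. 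Without this transfer from the base face to the other faces and edges, the hypotheses on $R_g$ and on the face mean curvatures cannot enter the estimate at all.
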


\begin{proof}
  We integrate \eqref{laplace} on $\Sigma_1$ with respect to the metric
  $\sigma^{\rho}$ we find the constant mean curvature $\lambda_{\rho}$ of
  $\Sigma_{\rho, u (\cdot, \rho)}$ satisfies
\begin{align*}
\lambda_{\rho} | \Sigma_1 |_{\sigma} = & - \tfrac{1}{\rho^2}
\int_{\Sigma_1} \Delta_{\sigma} u + \int_{\Sigma_1} L_1 u + Q_1 (u) +
H_{\rho} -\ensuremath{\operatorname{Ric}} (N_{\rho}, N_{\rho}) u -
|A_{\rho} |^2 u \\
= & - \tfrac{1}{\rho^2} \int_{\partial \Sigma_1} \tfrac{\partial
u}{\partial \nu_{\sigma}} + \int_{\Sigma_1} L_1 u + Q_1 (u) + H_{\rho}
-\ensuremath{\operatorname{Ric}} (N_{\rho}, N_{\rho}) u - |A_{\rho} |^2 u
\\
= & - \tfrac{1}{\rho} \int_{\partial \Sigma_1} \tfrac{1}{\sin \gamma_j}
(D_{\rho} - \cos \gamma_j A (\nu_{\rho}, \nu_{\rho})u + A_{\partial M}(\tilde{\nu}_{\rho}, \tilde{\nu}_{\rho}) u + L_2 u + Q_2 u) \\
& \quad + \int_{\Sigma_1} L_1 u + Q_1 (u) + H_{\rho}
-\ensuremath{\operatorname{Ric}} (N_{\rho}) u - |A_{\rho} |^2 u.
\end{align*}
  Here $| \Sigma_1 |_{\sigma}$ denotes the area of $\Sigma_1$ under the metric
  $\sigma^{\rho}$. Since $|u|_{C^{1, \alpha} (\bar{\Sigma}_1)} = o (\rho)$,
  and $\sigma$ is closed to the constant metric, we see that
  \[ \lambda_{\rho} | \Sigma_1 |_{\sigma^0} + o (1) = - \tfrac{1}{\rho}
     \int_{\partial \Sigma_1} \tfrac{1}{\sin \gamma_j} D_{\rho} +
     \int_{\Sigma_1} H_{\rho} . \]
  Since
\begin{align*}
D_{\rho} = & \cos \gamma_j - \cos \bar{\gamma }_j \\
= & - \sin \gamma_j (\gamma_j - \bar{\gamma }_j) +
O (| \gamma_j -
\bar{\gamma }_j |^2), \\
= & - \sin \gamma_j (\gamma_j - \bar{\gamma }_j) + O (\rho^2),
\end{align*}
  so
  \begin{equation}
    \lambda_{\rho} + o (1) = \tfrac{1}{\rho | \Sigma_1 |_{\sigma^0}} \left(
    \int_{\partial \Sigma_1} (\gamma_j - \bar{\gamma }_j) + \int_{\Sigma_1}
    H_{\rho} \right) . \label{asymptotic of lambda rho}
  \end{equation}
  Now we take integration with respect to $\sigma^0$ (still omitting the area
  element and line element). In fact, the right-hand side of the above differs
  by only $o (1)$ computed with respect to $\sigma^{\rho}$ or $\sigma^0$.
  
  We calculate the mean curvature $H_{\rho}$ of $\Sigma_{\rho}$. Using $y$
  coordinate, we have at $y$
  \[ H_{\rho} = - \tfrac{1}{2} h^{a b} (g^{1 1})^{- \tfrac{1}{2}} g^{1 k} (2
     \tfrac{\partial}{\partial y^b} g_{a k} - \tfrac{\partial}{\partial y^k}
     g_{a b, k}) \]
  where $a$, $b$ ranges from 2 to 3 and $h^{a b}$ is the inverse metric of
  $g_{a b}$ (as a $2 \times 2$ matrix). By changing the coordinate to $x$, we
  see at $x = \tfrac{y}{\rho} \in \Sigma_1$,
\begin{align*}
H_{\rho} = & - \tfrac{1}{2} h^{a b} (g^{1 1}) g^{1 k} (2 \tfrac{1}{\rho}
\tfrac{\partial}{\partial x^b} g_{a k} (\rho x) - \tfrac{1}{\rho}
\tfrac{\partial}{\partial x^k} g_{a b, k} (\rho x)) \\
= & - \tfrac{1}{\rho} [h^{a b} (g^{1 1}) g^{1 k} (2
\tfrac{\partial}{\partial x^b} g_{a k} (\rho x) -
\tfrac{\partial}{\partial x^k} g_{a b, k} (\rho x))] .
\end{align*}
  Let $\Omega_1$ be the portion of polyhedron on the upper side of $\Sigma_1$.
  We know that $\sigma^{\rho}_{i j} (x) = g_{i j} (\rho x)$, $x \in \Omega_1$
  is a family of metrics indexed by $\rho$ on $\Omega_1$ converging to the
  constant metric $\delta_{i j}$. We have by Taylor expansion,
  \[ \sigma_{i j}^{\rho} (x) = g_{i j} (\rho x) = \delta_{i j} + \sum_k \rho
     x^k \tfrac{\partial}{\partial y^k} g_{i j} (0) + O (\rho^2) . \]
  Now the mean curvature $H_{\rho}$ is just $\tfrac{1}{\rho} H_{\sigma}$ at
  $x$. The $H_{\sigma}$ is the mean curvature of $\Sigma_1$ at $x$ computed
  using the metric $\sigma^{\rho}$ and with respect to the normal pointing
  inside of $\Omega_1$. As $\rho \to 0$, $H_{\sigma} \to 0$. We can think of
  $H_{\rho}$ as the first variation of $H_{\sigma}$ under the perturbation
  $\delta \mapsto \delta + \sum_k \rho x^k \tfrac{\partial}{\partial y^k} g_{i
  j} (0)$. Similar analysis applies to $\tfrac{1}{\rho} \int_{\partial
  \Sigma_1} (\gamma_j - \bar{\gamma }_j)$.
  
  To sum up, the quantity
  \begin{equation}
    \tfrac{1}{\rho} \left( \int_{\partial \Sigma_1} (\gamma_j -
    \bar{\gamma }_j) + \int_{\Sigma_1} H_{\sigma} \right)\label{base face}
  \end{equation}
  as $\rho \to 0$ is the first variation of $\int_{\partial \Sigma_1}
  (\gamma_j - \bar{\gamma }_j) + \int_{\Sigma_1} H_{\sigma}$.
  
  Let $R_{\sigma}$ be the scalar curvature of $\Omega_1$ computed with respect
  to $\sigma$, and $H_{\sigma}$ is the mean curvature of the faces $\partial
  \Omega_1$ computed with respect to the normal pointing {\underline{inside}}
  of $\Omega_1$. We claim that the quantity
  \begin{equation}
    \int_{\Omega_1} R_{\sigma} - 2 \int_{\partial \Omega_1} H_{\sigma} - 2
    \int_{E (\Omega_1)} (\gamma - \bar{\gamma }) = O (\rho^2) \label{local
    miao-piubello}
  \end{equation}
  as $\rho \to 0$. Here $E(\Omega_1)$ denotes the union of all edges of $\Omega_1$.
  Indeed, \eqref{local miao-piubello} is a local version of
  {\cite[(3.33)]{miao-mass-2021}}. Let $h = x^k \tfrac{\partial}{\partial y^k}
  g_{i j} (0) + O (\rho)$. We only have to note by {\cite[Proposition
  4]{brendle-scalar-2011}} that
  \[ R_{\sigma} = - \rho \tfrac{\partial}{\partial x^i} (\partial_i h_{j j} -
     \partial_j h_{i j}) + O (\rho^2) . \]
  Note that the term of order $O (1)$ of $h$ is linear in $x$, so in fact,
  \[ R_{\sigma} = O (\rho^2) . \]
  By divergence theorem
  \[ \int_{\partial \Omega_1} R_{\sigma} = - \rho \int_{\partial \Omega_1}
     (\tfrac{\partial}{\partial x^i} h_{j j} - \tfrac{\partial}{\partial x^j}
     h_{i j}) \tilde{\nu}^i + O (\rho^2) = O (\rho^2) \]
  where $\tilde{\nu}$ are the unit normal of $\partial \Omega_1$ pointing
  outward of $\Omega_1$. The rest is formally the same with
  {\cite{miao-mass-2021}}.
  
  Now using \eqref{local miao-piubello}, we can find the value of \eqref{base
  face}. So
\begin{align*}
& \tfrac{1}{\rho} \left( \int_{\partial \Sigma_1} (\gamma_j -
\bar{\gamma }_j) + \int_{\Sigma_1} H_{\sigma} \right) \\
= & - \tfrac{1}{\rho} \left( \int_{\partial \Omega_1 \backslash \Sigma_1}
H_{\sigma} + \int_{E (\Omega_1)\backslash \partial \Sigma_1} (\gamma -
\bar{\gamma }) \right) + O (\rho) .
\end{align*}
  Using the assumptions on $H_{\rho}$, we have that $H_{\sigma} = \rho
  H_{\rho} \leq - \rho H_{\partial \Omega_1}$ on $\partial \Omega_1\backslash \Sigma_1$. And also $\gamma \leq
  \bar{\gamma }$, so
  \[ \tfrac{1}{\rho} \left( \int_{\partial \Sigma_1} (\gamma_j -
     \bar{\gamma }_j) + \int_{\Sigma_1} H_{\sigma} \right) \geq \left(
     \int_{\partial \Omega_1 \backslash \Sigma_1} H_{\partial \Omega_1} \right) + O (\rho),
  \]
  And we have the bound that $H_{\partial \Omega_1} \geq - 2 \cos \bar{\gamma }_j$ on
  each reference face $\bar{F}_j$, so by the above and \eqref{asymptotic of lambda
  rho},
  \begin{equation}
    \lambda_{\rho} \geq \tfrac{- 2}{| \Sigma_1 |_{\sigma^0}}
    \int_{\partial \Omega_1 \backslash \Sigma_1} \cos \bar{\gamma }_j + o (1) .
  \end{equation}
We pick one side
  face $\bar{F}_j$, and $\bar{F}_j$ is a triangle. The product of its area and $\cos
  \bar{\gamma }_j$ is the signed area of the projected triangle to the plane in which
  $\Sigma_1$ lies in. Summing them up overall side faces gives the area $|
  \Sigma_1 |_{\sigma^0}$, so we know that
  \begin{equation}
    \int_{\partial \Omega_1 \backslash \Sigma_1} \cos \bar{\gamma }_j = |
    \Sigma_1 |_{\sigma^0}, \label{area projection}
  \end{equation}
  therefore $\lim_{\rho \to 0} \lambda_{\rho} \geq - 2$.
\end{proof}

\begin{remark}
	It is possible that $\lim_{\rho \to 0} \lambda_{\rho} > - 2$.
\end{remark}

\subsection{Local splitting}

Assume that we have a local CMC foliation $\{\Sigma_{\rho} \}_{\rho \in I}$
where as $\rho$ increases $\Sigma_{\rho}$ moves in the direction of
$N_{\rho}$. We take $I$ to be $(- \varepsilon, \varepsilon)$, $(-
\varepsilon, 0)$ or $(0, \varepsilon)$ according to the location of the
foliation. 
Note that we have $\gamma_j=\bar{\gamma}_j$ for each leaf $\Sigma_\rho$ of this foliation.

\begin{proposition}
  \label{mean curvature ode}There exists a nonnegative continuous function $C
  (\rho)$ such that
  \begin{equation}
    - H' (\rho) \geq - C (\rho) (H (\rho) + 2) .
  \end{equation}
\end{proposition}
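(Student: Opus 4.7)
The plan is to evaluate the first variation of the mean curvature along the foliation and combine it with the Gauss equation, a generalization of the boundary identity \eqref{hyperoblic contribution of the boundary} to the case $H\neq -2$, and Gauss--Bonnet applied to each leaf. Let $f>0$ denote the lapse function of the foliation. Since $\Sigma_{\rho}$ has constant mean curvature $H(\rho)$, the first variation formula (see Appendix~\ref{variation of mean curvature and contact angle}) gives the pointwise identity $H'(\rho) = -\Delta_{\Sigma_\rho} f - (|A|^2 + \operatorname{Ric}(N))f$, with $H'(\rho)$ constant on the leaf. I would divide by $f$, integrate over $\Sigma_{\rho}$, use integration by parts together with the Robin-type boundary condition $\partial_{\nu} f = Qf$ (which follows because the contact angle $\bar\gamma_j$ is preserved along the foliation), and discard the non-negative term $\int |\nabla f|^2/f^2$ to obtain
\begin{equation*}
  -H'(\rho)\int_{\Sigma_{\rho}} f^{-1} \;\geq\; \int_{\Sigma_{\rho}}\bigl(|A|^2 + \operatorname{Ric}(N)\bigr) \;+\; \int_{\partial \Sigma_{\rho}} Q.
\end{equation*}

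Next I would repeat the algebraic manipulations from the proof of Theorem~\ref{dihedral non-rigidity}, now keeping track of the general value of $H$. The Gauss equation gives
\begin{equation*}
  |A|^2 + \operatorname{Ric}(N) = \tfrac{1}{2}(R_g + 6) + \tfrac{1}{2}|A^0|^2 + \tfrac{3}{4}(H-2)(H+2) - K_{\Sigma},
\end{equation*}
while redoing the chain of identities preceding \eqref{hyperoblic contribution of the boundary} for constant but arbitrary $H$ produces
\begin{equation*}
  Q + \kappa_g = \frac{H_{\partial M} - H\cos\bar\gamma_j}{\sin\bar\gamma_j} \;\geq\; -(H+2)\cot\bar\gamma_j,
\end{equation*}
where the inequality uses $H_{\partial M}\ge -2\cos\bar\gamma_j$. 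Substituting these back and dropping the non-negative contributions coming from $R_g\ge -6$ and $|A^0|^2\ge 0$, I arrive at
\begin{equation*}
  -H'(\rho)\int_{\Sigma_{\rho}} f^{-1} \;\geq\; \tfrac{3}{4}(H-2)(H+2)|\Sigma_{\rho}| - \int_{\Sigma_{\rho}} K_{\Sigma} - (H+2)\int_{\partial \Sigma_{\rho}} \cot\bar\gamma_j - \int_{\partial \Sigma_{\rho}}\kappa_g.
\end{equation*}

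To absorb the remaining curvature integrals I would invoke Gauss--Bonnet on $\Sigma_{\rho}$, which is topologically a disk so $\chi(\Sigma_{\rho}) = 1$. By \cite[Lemma 3.2]{li-polyhedron-2020} the interior corner angles satisfy $\alpha_j(\rho) \leq \bar\alpha_j$, so $\sum_j(\pi - \alpha_j) \geq \sum_j(\pi - \bar\alpha_j) = 2\pi$, which forces $\int K_{\Sigma} + \int \kappa_g \leq 0$. Collecting everything,
\begin{equation*}
  -H'(\rho)\int_{\Sigma_{\rho}} f^{-1} \;\geq\; (H(\rho)+2)\Bigl[\tfrac{3}{4}(H(\rho)-2)|\Sigma_{\rho}| - \int_{\partial \Sigma_{\rho}} \cot\bar\gamma_j\Bigr],
\end{equation*}
and the desired inequality $-H'(\rho) \geq -C(\rho)(H(\rho)+2)$ follows by taking $C(\rho)$ to equal the non-negative part of $\bigl(\int_{\partial \Sigma_{\rho}} \cot\bar\gamma_j - \tfrac{3}{4}(H(\rho)-2)|\Sigma_{\rho}|\bigr)\big/\int_{\Sigma_{\rho}} f^{-1}$.

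The hard part is arranging for this bracket to have a definite sign so that $C(\rho) \geq 0$ is genuinely non-negative and continuous in $\rho$. Under the restriction $\bar\gamma_j \leq \pi/2$ of \eqref{li foliation restriction}, every $\cot\bar\gamma_j \geq 0$ and, since $H<2$ on the foliation, the bracket is automatically non-positive and $C$ can be extracted directly. For the cylinder trapping condition one must control the boundary cotangent integral against the area of $\Sigma_{\rho}$ as the leaves approach the vertex (where the side lengths and the area have different rates of decay); this requires a more careful analysis of the leaf geometry near the (possibly obtuse) side faces, but should proceed along the same lines.
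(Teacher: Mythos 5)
Your overall scheme coincides with the paper's: take the first variation of $H$ along the foliation with the Robin boundary condition coming from the fixed contact angles, divide by the (positive) lapse and integrate, discard $\int |\nabla f|^2/f^2$, rewrite $|A|^2+\operatorname{Ric}(N)$ via the traced Gauss equation, convert the boundary term with the identity $Q+\kappa_g=(H_{\partial M}-H\cos\bar\gamma_j)/\sin\bar\gamma_j$ (the general-$H$ version of \eqref{hyperoblic contribution of the boundary}), bound $H_{\partial M}\ge -2\cos\bar\gamma_j$, and close with Gauss--Bonnet plus Li's corner-angle comparison. Your explicit bookkeeping of the $\tfrac34(H-2)(H+2)$ term is correct and in fact slightly more careful than the paper's write-up, and your treatment of the case \eqref{li foliation restriction} ($\bar\gamma_j\le\pi/2$, hence $\cot\bar\gamma_j\ge 0$) is complete.

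The genuine gap is the cylinder trapping case, which is exactly the case the proposition is designed to cover beyond \eqref{li foliation restriction}. Two issues. First, the device ``take $C(\rho)$ to be the nonnegative part'' is not legitimate: from $-H'\ge -D(\rho)(H+2)$ with $D<0$ you cannot pass to $-H'\ge 0$, because the sign of $H+2$ is precisely what is unknown at this stage; you must actually prove $D(\rho)\ge 0$. Second, your suggested route --- controlling $\sum_j\int_{\partial\Sigma_\rho}\cot\bar\gamma_j$ against the area term $-\tfrac34(H-2)|\Sigma_\rho|$ --- cannot succeed in general: for the foliation near the vertex the boundary integral scales like $\rho$ while $|\Sigma_\rho|$ scales like $\rho^2$, so the area contribution is lower order and cannot compensate a negative cotangent sum. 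The paper's mechanism is different: it is Lemma \ref{lem_bdry_infi_rig}, a purely Euclidean vector identity showing that for the trapping cylinder $\sum_j\cot\hat\gamma_j\,|\partial\bar B\cap\hat F_j|=0$, whence $\sum_j\cot\bar\gamma_j\,|\partial\bar B\cap\bar F_j|>0$ under Definition \ref{cylinder condition}; this transfers to the leaves because the relevant leaf is infinitesimally rigid, hence a flat polygon whose corner angles equal $\bar\alpha_j$ by \eqref{boundary corner rigid} (and \eqref{sign of C rho} is insensitive to edge lengths), so $C(0)>0$ and $C(\rho)>0$ for small $|\rho|$ by continuity. Without this ingredient your argument establishes the proposition only under \eqref{li foliation restriction}.
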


\begin{proof}
  Let $\psi : \Sigma \times I \to M$ parametrizes the foliation. Denote $Y =
  \tfrac{\partial \psi}{\partial t}$, $v_{\rho} = \langle Y, N_{\rho}
  \rangle$. Then
  \[ - \tfrac{\mathrm{d}}{\mathrm{d} \rho} H (\rho) = \Delta_{\rho} v_{\rho} +
     (\ensuremath{\operatorname{Ric}}(N_{\rho}) + |A_{\rho} |^2) v_{\rho}
     \text{ in } \Sigma_{\rho}, \label{derivative of H} \]
  and
  \[ \tfrac{\partial v_{\rho}}{\partial \nu_{\rho}} = [(- \cot \gamma_j
     A_{\rho} (\nu_{\rho}, \nu_{\rho}) + \tfrac{1}{\sin \gamma_j} A_{\partial M}(\tilde{\nu}_{\rho}, \tilde{\nu}_{\rho}))] v_{\rho} \text{ on } \partial
     \Sigma_{\rho} \cap F_j . \]
  By shrinking the interval $I$ if needed, we can assume that $v_{\rho} > 0$
  for $\rho \in I$. By multiplying $\tfrac{1}{v_{\rho}}$ on both sides of
  \eqref{derivative of H} and integrating on $\Sigma_{\rho}$, we deduce that
\begin{align*}
& - H' (\rho) \int_{\Sigma_{\rho}} \tfrac{1}{v_{\rho}} \\
= & \int_{\Sigma_{\rho}} \tfrac{\Delta_{\rho} v_{\rho}}{v_{\rho}} +
(\ensuremath{\operatorname{Ric}}(N_{\rho}) + |A_{\rho} |^2) \\
= & \int_{\partial \Sigma_{\rho}} \tfrac{1}{v_{\rho}} \tfrac{\partial
v_{\rho}}{\partial \nu_{\rho}} + \tfrac{1}{2} \int_{\Sigma_{\rho}} (R_g +
|A_{\rho} |^2 - H^2_{\rho}) - \int_{\Sigma_{\rho}} K_{\Sigma_{\rho}}
\\
\geq & \sum_j \int_{\partial \Sigma_{\rho} \cap F_j} [(- \cot
\gamma_j A_{\rho} (\nu_{\rho}, \nu_{\rho}) + \tfrac{1}{\sin \gamma_j} A_{\partial M}(\tilde{\nu}_{\rho}, \tilde{\nu}_{\rho}))] - \int_{\Sigma_{\rho}}
K_{\Sigma_{\rho}} .
\end{align*}
  Using the Gauss-Bonnet theorem and {\cite[Lemma 3.2]{li-polyhedron-2020}},
  \begin{equation*}
    - \int_{\Sigma_{\rho}} K_{\Sigma_{\rho}} \geq \int_{\partial
    \Sigma_{\rho}} \kappa_g .
  \end{equation*}
  As in \eqref{hyperoblic contribution of the boundary},
  \[ \kappa_g - \cot \gamma_j A (\nu_{\rho}, \nu_{\rho}) + \tfrac{1}{\sin
     \gamma_j} A_{\partial M}(\tilde{\nu}_{\rho}, \tilde{\nu}_{\rho}) = - H (\rho) \cot
     \gamma_j + \tfrac{1}{\sin \gamma_j} \bar{H} . \]
  So
\begin{align*}
& - H' (\rho) \int_{\Sigma_{\rho}} \tfrac{1}{v_{\rho}} \\
\geq & \sum_j \int_{\partial \Sigma_{\rho}} [- H (\rho) \cot \gamma_j
+ \tfrac{1}{\sin \gamma_j} \bar{H}] \\
\geq & \sum_j \int_{\partial \Sigma_{\rho}} [- H (\rho) \cot \gamma_j
- \tfrac{2}{\sin \gamma_j} \cos \gamma_j] \\
= & \sum_j \cot \gamma_j [- H (\rho) - 2] | \partial \Sigma_{\rho} \cap
F_j |,
\end{align*}
  where we have used $H_j \geq - 2 \cos \gamma_j$. We take
  \[ C (\rho) = \sum_j \cot \gamma_j  | \partial \Sigma_{\rho} \cap F_j |. \]
  It is clear that $\gamma_j \leq \frac{\pi}{2}$ obvious implies that $C(\rho)\geq 0$. When the reference satisfies the cylinder condition, we know  from the following Lemma \ref{lem_bdry_infi_rig} that $C(0)>0$, by continuity, $C(\rho)>0$ for small $|\rho|$. So the proposition is proved.
\end{proof}

\begin{lemma}
  \label{lem_bdry_infi_rig} 
  Let $\bar{P}$ be a flat reference polyhedron satisfying
  the cylinder trapping condition (see Definition \ref{cylinder condition}) in the solid cylinder $\bar{\mathcal{C}}$, and $\bar{B}$
  be the face such that $\partial \bar{B} \subset \partial \bar{\mathcal{C}}$. Let
  $\bar{\gamma}_j$ be the dihedral angle of the side face $\bar{F}_j$ and the base face
  $\bar{B}$, then
  \begin{equation}
    \sum_j^{} \cot \bar{\gamma}_j  | \partial \bar{B} \cap \bar{F}_j | > 0. \label{sign of C rho}
  \end{equation}
\end{lemma}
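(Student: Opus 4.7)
The plan is to set up Cartesian coordinates with $\bar B$ in the plane $\{z=0\}$ and the rest of $\bar P$ in $\{z>0\}$, and to convert each $\cot\bar\gamma_j$ into an elementary planar quantity. Letting $\tau_j$ be a tangent vector along $e_j=\bar B\cap\bar F_j$, $u_j$ the in-plane unit outward normal to $e_j$, and $\delta_j$ a ``rising'' vector from a vertex of $e_j$ to the opposite face (the apex $\bar p$ in the cone case, or the corresponding vertex $w_j$ in the prism case), I would use the identities $\tau_j\times\hat z=|\tau_j|u_j$ and $\tau_j\times u_j=-|\tau_j|\hat z$ to compute
\[
    \tau_j\times\delta_j \;=\; |\tau_j|\bigl(d_j\,\hat z \;+\; h\,u_j\bigr),
\]
where $h>0$ is the perpendicular height of the opposite face above $\{z=0\}$ and $d_j:=-u_j\cdot(\delta_j)_{\mathrm{horiz}}$ is the signed horizontal distance from $e_j$ to the perpendicular projection of the opposite edge (resp.\ apex), positive on the interior side. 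Comparing this with the decomposition $N_{\bar F_j}=\sin\bar\gamma_j\,u_j+\cos\bar\gamma_j\,\hat z$ forced by the sign convention \eqref{eq:dihedral angle} gives the key identity $\cot\bar\gamma_j=d_j/h$, and the sum of interest becomes $\sum_j\cot\bar\gamma_j|e_j|=\tfrac{1}{h}\sum_j d_j|e_j|$.

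Next I would evaluate the sum using the closed-polygon identity $\sum_j u_j|e_j|=0$. In the cone case, $\delta_j=\bar p-v_j$, so the $\bar p'$-dependent contribution cancels, leaving $\sum_j d_j|e_j|=\sum_j u_j\cdot v_j\,|e_j|=2|\bar B|$ by the standard shoelace/divergence identity. Thus the cone case is positive automatically with no cylinder trapping hypothesis needed, consistent with Lemma \ref{lem_tetrahedron_trapping}. In the prism case, $\delta_j=w_j-v_j$ now varies with $j$, and the same expansion gives
\[
    \sum_j d_j|e_j| \;=\; 2|\bar B_1|-\sum_j u_j\cdot(w_j)_{\mathrm{horiz}}\,|e_j|,
\]
which -- crucially -- is invariant under horizontal translations of $\bar B_2$ by the very same identity $\sum_j u_j|e_j|=0$.

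Finally I would exploit the cylinder trapping condition. The solid cylinder must be of the form $\bar{\mathcal C}=\bar B+\mathbb R\hat a$ for some axis $\hat a$ transverse to the plane of $\bar B$, so $\bar P\subset\bar{\mathcal C}$ translates exactly to $\pi_{\hat a}(\bar B_2)\subseteq \bar B_1$ with at least one edge projecting strictly into the interior of $\bar B_1$. Since $\pi_{\hat a}(\bar B_2)$ is a horizontal translate of the perpendicular projection $\bar B_2'$, the translation invariance from the previous step lets me assume without loss of generality that $\bar B_2'\subseteq\bar B_1$ with strict inclusion at some edge $e''_{j_0}$. Then every $d_j\ge 0$ (because each edge of $\bar B_2'$, being parallel to the corresponding edge of $\bar B_1$, lies on the interior side of the line through $e_j$), and $d_{j_0}>0$, hence $\sum_j d_j|e_j|\ge d_{j_0}|e_{j_0}|>0$, which is exactly \eqref{sign of C rho}. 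The main subtlety is reconciling the possibly oblique cylinder axis with the perpendicular projection used to define $d_j$, and the translation-invariance trick is precisely what handles this mismatch.
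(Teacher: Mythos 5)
Your argument is correct, but it is organized differently from the paper's. The paper never computes $\cot\bar{\gamma}_j$ for $\bar{P}$ itself: it introduces the dihedral angles $\hat{\gamma}_j$ that the faces of the trapping cylinder $\bar{\mathcal{C}}$ make with $\bar{B}$, uses $\hat{\gamma}_j\geq\bar{\gamma}_j$ (strict for at least one $j$) and the monotonicity of $\cot$ to reduce \eqref{sign of C rho} to the exact identity $\sum_j\cot\hat{\gamma}_j\,|\partial\bar{B}\cap\hat{F}_j|=0$, and proves that identity by expressing the cylinder normals as normalized $\overrightarrow{p_jp_{j+1}}\times\vec{v}$ and invoking $\sum_j\overrightarrow{p_jp_{j+1}}=0$. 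You instead derive the exact formula $\cot\bar{\gamma}_j=d_j/h$ from $\delta_j\perp N_{\bar{F}_j}$ together with the decomposition $N_{\bar{F}_j}=\sin\bar{\gamma}_j\,u_j+\cos\bar{\gamma}_j\,\hat{z}$ (which is indeed what the convention \eqref{eq:dihedral angle} forces), evaluate $\sum_j d_j|e_j|$ using the same closed-polygon identity $\sum_j|e_j|\,u_j=0$, and then use horizontal-translation invariance to replace the oblique projection along the cylinder axis by the perpendicular projection, concluding by termwise nonnegativity of the $d_j$ plus strictness at the index corresponding to the edge of $\bar{B}_2$ lying strictly inside $\bar{\mathcal{C}}$; each of these steps checks out, including the claim $\bar{\mathcal{C}}=\bar{B}+\mathbb{R}\hat{a}$, which follows from $\partial\bar{B}\subset\partial\bar{\mathcal{C}}$ with vertices on edges of $\bar{\mathcal{C}}$. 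Both proofs rest on the same planar identity, but yours is an exact evaluation rather than a comparison: in the cone case it gives $\sum_j\cot\bar{\gamma}_j|e_j|=2|\bar{B}|/h>0$ with no trapping hypothesis, so Lemma \ref{lem_tetrahedron_trapping} and the remark that it holds for any flat cone come out as a byproduct, and in the prism case positivity is visible term by term. What the paper's version buys is that it works directly with the cylinder direction $\vec{v}$, so the oblique-versus-perpendicular projection mismatch that you resolve with the translation-invariance trick never arises. (Minor aside: your constant $2|\bar{B}|$ is the correct one; the displayed formula at the end of the paper's proof of Lemma \ref{lem_tetrahedron_trapping} misses a factor of $2$ from the area of a triangle, harmlessly for the sign.)
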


\begin{proof}
  Let $\hat{F}_j$ be the face of $\bar{\mathcal{C}}$ passing through $\partial \bar{B}
  \cap \bar{F}_j$ and $\hat{\gamma}_j$ be the dihedral angle formed by
  $\hat{F}_j$ and $\bar{B}$. From the cylinder trapping condition implies that
  \begin{equation*}
    \hat{\gamma}_j \geq\bar{\gamma }_j
  \end{equation*}
  for all $j$ and for at least one $j$, the inequality is strict, so
  \begin{equation*}
    \sum_j^{} \cot \bar{\gamma }_j  | \partial \bar{B} \cap \bar{F}_j | > \sum_j \cot
    \hat{\gamma}_j | \partial \bar{B} \cap \hat{F}_j | .
  \end{equation*}
  It suffices to show that the right-hand side vanishes.
  
  Let $\vec{v}$ be the vector parallel to the edges of $\bar{\mathcal{C}}$ pointing
  to the upside of $\bar{B}$. Let $p_j$ be the vertices of the polygon $\bar{B}$ such
  that $p_1 p_2 \ldots p_j$ are arranged in the counter-clockwise direction,
  so the unit normal pointing outward of $\bar{\mathcal{C}}$ is
  \[ N_j = \frac{\overrightarrow{p_j p_{j + 1}} \times
     \vec{v}}{|\overrightarrow{p_j p_{j + 1}} \times \vec{v|}} . \]
  Let $W$ be the normal of $\bar{B}$, so
  \[ \cos \hat{\gamma}_j = N_j \cdot W, \sin \hat{\gamma}_j = |W \times N_j |
     . \]
  So
  \[ \cot \hat{\gamma}_j = \frac{N_j \cdot N}{|N \times N_j |} =
     \frac{(\overrightarrow{p_j p_{j + 1}} \times \vec{v}) \cdot W}{|
     (\overrightarrow{p_j p_{j + 1}} \times \vec{v}) \times W|} =
     \frac{(\overrightarrow{p_j p_{j + 1}} \times \vec{v}) \cdot W}{|
     \overrightarrow{p_j p_{j + 1}} (\vec{v} \cdot W) |}, \]
  where we have used the cross product formula $(\vec{a} \times \vec{b})
  \times \vec{c} = \vec{b} (\vec{a} \cdot \vec{c}) - \vec{a}  (\vec{b} \cdot
  \vec{c})$ for vectors $\vec{a}$, $\vec{b}$, $\vec{c}$ in $\mathbb{R}^3$. So
  
  \begin{align}
    \sum_j \cot \hat{\gamma}_j | \partial \bar{B} \cap \hat{F}_j | = & \sum_j
    \frac{(\overrightarrow{p_j p_{j + 1}} \times \vec{v}) \cdot W}{|
    \overrightarrow{p_j p_{j + 1}} (\vec{v} \cdot N) |} | \overrightarrow{p_j
    p_{j + 1}} | \nonumber\\
    = & \frac{1}{\vec{v} \cdot W} \sum_j (\overrightarrow{p_j p_{j + 1}}
    \times \vec{v}) \cdot W \nonumber\\
    = & \frac{1}{\vec{v} \cdot W} \left[ \left( \sum_j \overrightarrow{p_j
    p_{j + 1}} \right) \times \vec{v} \right] \cdot W = 0. \nonumber
  \end{align}
  
  \ 
\end{proof}

\begin{remark}
  \label{equivalent cylinder trapping}Note that the cylinder $\bar{\mathcal{C}}$
  is uniquely determined by the vector $\vec{v}$. So the cylinder trapping
  condition is equivalent to the existence of a constant vector $\vec{v}$ such
  that
  \begin{equation}
    N \cdot \frac{\overrightarrow{p_j p_{j + 1}} \times \vec{v}}{|
    \overrightarrow{p_j p_{j + 1}} \times \vec{v} |} \leq \cos \bar{\gamma }_j,
  \end{equation}
  for all $j$ and for at least one $j$ the above inequality is strict. 
\end{remark}

\begin{remark}
  The length of the edges of $\bar{B}$ plays no role in the proof of Lemma
  \ref{lem_bdry_infi_rig}. If $\bar{P}$ is cylinder trapped, for another polygon
  $\bar{B}_1$ with the same interior angles as $\bar{B}$, the inequality \eqref{sign of C
  rho} is also true for $\bar{B}_1$. This can be also seen from Remark
  \ref{equivalent cylinder trapping}.
\end{remark}

\begin{lemma}
  If $\bar{P}$ is a flat tetrahedron reference, \eqref{sign of C rho} is
  automatically true.
  \label{lem_tetrahedron_trapping}
\end{lemma}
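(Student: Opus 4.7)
The plan is to reduce the statement to Lemma \ref{lem_bdry_infi_rig} by showing that every flat tetrahedron reference automatically satisfies the cylinder trapping condition of Definition \ref{cylinder condition}. Once this is established, \eqref{sign of C rho} follows immediately.

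Concretely, let $\bar{P}$ be a flat tetrahedron with triangular base face $\bar{B}$ and apex $\bar{p}$. I would pick any interior point $q \in \mathring{\bar{B}}$ (for instance, the centroid) and set $\vec{v} := \bar{p} - q$. Define the infinite solid cylinder
\[
  \bar{\mathcal{C}} := \{y + s\vec{v} : y \in \bar{B},\ s \in \mathbb{R}\},
\]
which is the triangular prism-like region with cross-section $\bar{B}$ and axis direction $\vec{v}$. By construction each vertex of $\bar{B}$ lies on the corresponding edge of $\bar{\mathcal{C}}$, and $\partial \bar{B} \subset \partial \bar{\mathcal{C}}$.

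Next I would check the inclusion $\bar{P} \subset \bar{\mathcal{C}}$. A generic point of the tetrahedron has the form $t\bar{p} + (1-t)y$ with $t \in [0,1]$ and $y \in \bar{B}$. Using $\bar{p} = q + \vec{v}$, this rewrites as
\[
  t\bar{p} + (1-t)y \;=\; \bigl(tq + (1-t)y\bigr) + t\vec{v}.
\]
Since $q, y \in \bar{B}$ and $\bar{B}$ is convex, the planar point $tq + (1-t)y$ lies in $\bar{B}$, so $t\bar{p} + (1-t)y \in \bar{\mathcal{C}}$. Because $\bar{P}$ is of cone type the final clause of Definition \ref{cylinder condition} (the one demanding that an edge of a second base face lie strictly inside $\bar{\mathcal{C}}$) is vacuous, so $\bar{P}$ satisfies the cylinder trapping condition. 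Invoking Lemma \ref{lem_bdry_infi_rig} with this $\bar{\mathcal{C}}$ then yields \eqref{sign of C rho}.

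No real obstacle arises here; the argument is just the elementary observation that for a triangular base $\bar{B}$, any point in the complement of the plane of $\bar{B}$ can be projected to an interior point of $\bar{B}$ along a suitable axis, so the cylinder trapping condition can always be realized for a tetrahedron. The only thing one needs to take care of is that $q$ is chosen strictly inside $\bar{B}$, which is what ultimately furnishes the strict inequality required in Definition \ref{cylinder condition}.
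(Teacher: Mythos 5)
Your proposal is correct, but it follows a genuinely different route from the paper. The paper proves \eqref{sign of C rho} for a tetrahedron by a direct computation: it projects the apex $p$ orthogonally to the plane of the base $\bar{B}$, obtaining a point $q$, and observes that $l_j \cot \bar{\gamma}_j = \pm \mathrm{Area}(\triangle p_j p_{j+1} q)/|pq|$ with signs governed by which side of each edge $q$ lies on, so that the signed areas sum to $\mathrm{Area}(\bar{B})/|pq| > 0$; this yields an explicit positive value and, as the paper remarks, works verbatim for any flat cone. You instead verify that a tetrahedron always satisfies the cylinder trapping condition of Definition \ref{cylinder condition} (taking the oblique cylinder over $\bar{B}$ with axis $\vec{v} = \bar{p} - q$ for an interior point $q$ of $\bar{B}$, the containment check via convexity being correct) and then invoke Lemma \ref{lem_bdry_infi_rig}; since that lemma is proved earlier and independently, there is no circularity, and your reduction also directly substantiates the paper's remark that every cone type reference is cylinder trapped. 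One small imprecision: the strictness you attribute to ``the strict inequality required in Definition \ref{cylinder condition}'' is, for a cone, not part of that definition (its strictness clause concerns only prisms); what actually matters is the strict inequality $\hat{\gamma}_j > \bar{\gamma}_j$ used inside the proof of Lemma \ref{lem_bdry_infi_rig}, and your choice of $q$ in the open interior of $\bar{B}$ secures exactly this, because it places the apex strictly inside the cylinder so that every side face tilts strictly inward relative to the corresponding cylinder face. With that clarification your argument is complete; the trade-off is that the paper's computation gives the quantitative identity $\sum_j l_j \cot \bar{\gamma}_j = \mathrm{Area}(\bar{B})/|pq|$, while yours gives a cleaner conceptual reduction to the already established cylinder lemma.
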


\begin{proof}
  We note that $\bar{B}$ is a triangle. Let $q$ be the projection of the vertex $p$
  to the plane of the base face $\bar{B}$. We label tree vertices of the triangle
  $p_1$, $p_2$ and $p_3$, denote by $l_j$ the length of the edge opposite to
  $p_i$. Let the straight line passing through $p_1$ and $p_2$ be $\ell_3$,
  $p_4$ be the point on $\ell_3$ realizing the distance from $q$ to $\ell$. So
  \[ \cot \bar{\gamma }_j = \pm \tfrac{|qp_4 |}{|pq|} . \]
  The sign depends on whether $q$ lies on the same side of $\ell_3$ as $p_3$. If
  it lies on the same side, then it is a positive sign. And
  \[ |qp_4 | = \tfrac{\mathrm{Area} (\triangle p_1 p_2 q)}{|p_1 p_2 |} . \]
  So we see that
  \begin{equation*}
    |p_1 p_2 | \cot \bar{\gamma }_j = \pm \tfrac{\mathrm{Area} (\triangle p_1
    p_2 q)}{|pq|} .
  \end{equation*}
  Summing over three edges of $\triangle p_1 p_2 p_3$, we see that
  \[ \sum_{j = 1}^3 \cot \bar{\gamma }_j l_j = \tfrac{\mathrm{Area} (\triangle
     p_1 p_2 p_3)}{|pq|}, \]
  which is positive.
\end{proof}
\begin{remark}
The proof actually works for any flat cone.
\end{remark}
\

Now we prove Theorem \ref{rigidity}.

\begin{proof}[Proof of Theorem \ref{rigidity}]
By Theorem \ref{thm_regularity},
  if $(M^3, g)$ is of prism type or if $(M^3, g)$ is of cone type with $I <
  0$, then the variational problem \eqref{action} has a nontrivial solution
  $E$ with a $C^{1, \alpha}$ boundary $\Sigma$. Therefore $\Sigma$ is
  an infinitesimally rigid capillary surface of mean curvature $- 2$, and there is a
  CMC capillary foliation $\{\Sigma_{\rho} \}_{\rho \in I}$ around $\Sigma$
  where $I = (- \varepsilon, \varepsilon)$ if $\Sigma$ lies in the interior of $M$, $I = [0, \varepsilon)$ if
  $\Sigma = \bar{B}_1$, and $I = (- \varepsilon, 0]$ if $\Sigma = \bar{B}_2$. By
  Proposition \ref{mean curvature ode}, the mean curvature of $H (\rho)$ of
  $\Sigma_{\rho}$ satisfies
  \begin{equation}
    - (H (\rho) + 2)' \geq - C (\rho) (H (\rho) + 2),
  \end{equation}
  and $H (0) = - 2$ where $C (\rho) > 0$. By standard ordinary differential
  equation theory,
  \begin{equation}
    H (\rho) \leq - 2 \text{ when } \rho \geq 0, \text{ } H (\rho)
    \geq - 2 \text{ when } \rho \leq 0.
  \end{equation}
  Denote $E_{\rho}$ be the corresponding open domain in $M$. Since
  $\Sigma_{\rho}$ meets $F_j$ at constant angles $\bar{\gamma }_j$, the first
  variation \eqref{first variation of action} formula implies that
  \begin{equation*}
    F (\rho_1) - F (\rho_2) = \int_{\rho_1}^{\rho_2} \mathrm{d} \rho
    \int_{\Sigma_{\rho}} [H (\rho) + 2] v_{\rho} .
  \end{equation*}
  For $0 < \varepsilon_1 < \varepsilon$,
  \begin{equation*}
    F (\varepsilon_1) \leq F (0), \text{ } F (- \varepsilon_1) \leq
    F (0) .
  \end{equation*}
  By since $\Sigma_0 = \Sigma$ is a minimiser of the functional
  \eqref{action}. Therefore in a neighborhood of $\Sigma$, $F (\rho) = F (0)$
  and $H (\rho) = - 2$. Tracing back to equality conditions, we find that
  $v_{\rho}$ is constant and each $\Sigma_{\rho}$ is infinitesimally rigid.
  
  Now we show that the vector field $Y^{\bot}$ is conformal. Because each
  $\Sigma_{\rho}$ is umbilic and of constant mean curvature $- 2$, so
  $\nabla_{\partial_i} N = - \partial_i$. Since $u = \langle Y, N \rangle$ is
  constant on each $\Sigma_{\rho}$, so
\begin{align*}
0 & = \nabla_{\partial_i} \langle Y, N \rangle \\
& = \langle \nabla_{\partial_i} Y, N  \rangle + \langle Y,
\nabla_{\partial_i} N \rangle \\
& = \langle \nabla_Y \partial_i, N \rangle - \langle Y, \partial_i
\rangle \\
& = Y \langle \partial_i, N \rangle - \langle \nabla_Y N, \partial_i
\rangle - \langle Y, \partial_i \rangle .
\end{align*}
  Observe that
  \[ \nabla_Y N = \nabla_{Y^{\top}} N + \nabla_{Y^{\bot}} N = - Y^{\top} +
     \nabla_{Y^{\bot}} N, \langle Y, \partial_i \rangle = Y^{\bot}, \]
  hence
  \[ \langle \nabla_{Y^{\bot}} N, \partial_i \rangle = 0. \]
  Moreover, we have using umbilicity, $\nabla_{Y^{\bot}} \langle \partial_i,
  \partial_j \rangle = - 2 u g_{i j}$. These conditions ensure that the
  foliation formed a subset $\cup_{\rho} \Sigma_{\rho}$ of hyperbolic 3-space
  and it is a local splitting. Since $M$ is connected, and considering that
  each leaf is of constant contact angle with side faces, we conclude that $M$
  is isometric to a polyhedron in the upper half space model.
  
  If $M$ is of cone type with the vertex as the minimiser, then by Theorem
  \ref{foliation near vertex} and Lemma \ref{mean curvature bound near
  vertex}, there is a CMC capillary foliation $\{\Sigma_{\rho} \}_{\rho \in (-
  \varepsilon, 0)}$ near the vertex with $\lim_{\rho \to 0} H (\rho) \geq
  - 2$. So we have either  $\lim_{\rho \to 0} \lambda_{\rho} > - 2$ or
  $\lim_{\rho \to 0} \lambda_{\rho} = - 2$.
  
  If $\lim_{\rho \to 0} \lambda_{\rho} > - 2$. By continuity, for all small
  $\rho \in (- \varepsilon, 0]$, $\lambda_{\rho} > - 2$. For some
  $\Sigma_{\rho_0, u (\cdot, \rho_0)}$ where $\rho_0 \in (0, \varepsilon)$,
  since the contact angle is constantly $\bar{\gamma }_j$ along each side edges
  of $\partial \Sigma_{\rho_0}$ by the construction of $\Sigma_{\rho, u
  (\cdot, \rho)}$, $\Sigma_{\rho, u (\cdot, \rho)}$ with $\rho_0 < \rho < 0$
  is a deformation of $\Sigma_{\rho_0, u (\cdot, \rho_0)}$ moving toward the
  vertex. Because $\lambda_{\rho} > - 2$, by the first variation,
  \begin{equation}
    F (\rho) - F (\rho_0) = \int_{\rho_0}^{\rho} \mathrm{d} \rho
    \int_{\Sigma_{\rho}} (H (\rho) + 2) v_{\rho} \label{first variation near
    vertex}
  \end{equation}
  when $- \varepsilon < \rho_0 < \rho < 0$. So $F (\rho)$ is a strictly
  increasing as $\rho \nearrow 0$, so
  \begin{equation*}
    0 > F (\Sigma_{\rho_0, u (\cdot, \rho_0)})
  \end{equation*}
  contradicts that the vertex is a minimiser.
  
  If $\lim_{\rho \to 0} \lambda_{\rho} = - 2$, we have that $H (\rho)
  \geq - 2$ for $\rho \in (- \varepsilon, 0) $ by Proposition \ref{mean
  curvature ode}. Let $E_{\rho}$ be the bounded subset bounded by
  $\Sigma_{\rho}$, we still have \eqref{first variation near vertex}, we have
  that
  \begin{equation*}
    F (\rho) \geq F (\rho_0) .
  \end{equation*}
  By letting $\rho \to 0$, we have that
  \begin{equation*}
    0 \geq F (\rho_0) .
  \end{equation*}
  So we conclude that $F (\rho) \equiv 0$ for $\rho \in (- \varepsilon, 0)$
  and that each leaf $\Sigma_{\rho}$ is infinitesimally rigid. Thus as before,
  $(M^3, g)$ admits a global splitting of flat polygons in $\mathbb{R}^2$ and
  hence is a polyhedron in hyperbolic 3-space.
\end{proof}

\

\

\appendix\section{Some auxiliary results}\label{variation of mean curvature and contact angle}

We record the first variation of mean curvature and the contact angle.

\begin{lemma}
  Let $Y$ be a vector field tangent to $\partial M$ along $\partial \Sigma$
  and $u = \langle Y, N \rangle$, then
  \begin{equation}
    \delta H = - \Delta u - (\ensuremath{\operatorname{Ric}}(N) + |A|^2) u -
    \langle \nabla^{\Sigma} H, Y \rangle, \label{first variation mean
    curvature}
  \end{equation}
  and
  \begin{equation}
    \delta \langle X, N \rangle = - \sin \gamma \tfrac{\partial u}{\partial
    \nu} + (- \cos \gamma A (\nu, \nu) + {A_{\partial M}} (\tilde{\nu}, \tilde{\nu})) u +
    \langle L, \nabla^{\partial \Sigma} \gamma \rangle, \label{first variation
    of contact angle}
  \end{equation}
  where $L$ is a bounded vector field.
\end{lemma}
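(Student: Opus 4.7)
The plan is to derive both formulas by the standard technique of decomposing the variation vector field $Y$ into components normal and tangential to $\Sigma$, using classical variational identities for the normal part and direct transport for the tangential part. Along $\partial \Sigma$, the decomposition must additionally respect the constraint that $Y$ is tangent to $\partial M$, which introduces the interplay between the two adapted orthonormal frames $\{N, \nu\}$ and $\{X, \tilde{\nu}\}$ in the plane perpendicular to the unit tangent $T$ of $\partial \Sigma$.

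For \eqref{first variation mean curvature}, I would write $Y = uN + Y^{\top}$ where $Y^{\top}$ is tangent to $\Sigma$ and $u = \langle Y, N\rangle$. A pure normal variation with velocity $uN$ gives the classical Jacobi operator response
\[
\delta_{uN} H = -\Delta u - (\operatorname{Ric}(N) + |A|^2) u,
\]
obtained by differentiating $H_t = \operatorname{div}_{\Sigma_t} N_t$ along the flow and commuting covariant derivatives via the Ricci identity. The tangential part $Y^{\top}$ merely reparametrizes $\Sigma$ and hence contributes only the Lie derivative $Y^{\top}(H) = \langle \nabla^{\Sigma} H, Y^{\top}\rangle$; since $\nabla^{\Sigma} H$ is tangent to $\Sigma$, this equals $\langle \nabla^{\Sigma} H, Y \rangle$. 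Summing these contributions produces the claimed formula.

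For \eqref{first variation of contact angle}, I would use the rotation relations in the $T^{\perp}$-plane: up to sign conventions fixed by Figure \ref{labelling-of-vectors}, one has $N = \cos\gamma\, X + \sin\gamma\, \tilde{\nu}$ and $\nu = -\sin\gamma\, X + \cos\gamma\, \tilde{\nu}$. Writing $Y = aT + b\tilde{\nu}$ along $\partial\Sigma$ (enforcing tangency to $\partial M$) gives $u = b\sin\gamma$. Expanding $\delta \langle X, N\rangle = \langle \nabla_Y X, N\rangle + \langle X, \delta N\rangle$, three packets emerge: the Weingarten map of $\partial M$ applied to $\nabla_{\tilde\nu} X$ produces the $A_{\partial M}(\tilde{\nu}, \tilde{\nu})u$ term; the normal part of $\delta N$ equals $-\nabla^{\Sigma} u$, whose $X$-component is $-\sin\gamma\, \partial u/\partial \nu$; and the tangential reparametrization of $\delta N$ combined with the shape operator of $\Sigma$ gives $-\cos\gamma\, A(\nu,\nu) u$. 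Residual contributions coming from the $aT$-component of $Y$ arise only when $\gamma$ varies along $\partial\Sigma$; grouping them as $\langle L, \nabla^{\partial\Sigma}\gamma\rangle$ with a bounded vector field $L$ produces the last term. In particular, in the constant-contact-angle setting of Theorem \ref{thm_regularity} used throughout the paper, this term simply drops out.

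The main technical obstacle is orientation and sign bookkeeping in the contact angle formula: with four unit vectors $N, \nu, X, \tilde{\nu}$ subject to the paper's conventions, one must fix the rotation relations consistently so that the $\sin\gamma$ and $\cos\gamma$ coefficients appear with the correct signs. Once the $T^{\perp}$-frame identities are established pointwise on $\partial\Sigma$, each contribution is a routine application of the Weingarten equations of $\Sigma$ and of $\partial M$, together with the normal-variation identity $\delta_{uN} N = -\nabla^{\Sigma} u$; hence the proof reduces to a local computation at each boundary point followed by collecting terms.
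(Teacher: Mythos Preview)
Your approach is the standard one and is exactly what one finds in the reference the paper cites: the paper does not give its own proof of this lemma but simply refers to \cite[Appendix A]{li-polyhedron-2020}, where the computation is carried out via precisely the normal/tangential decomposition and the frame rotation $\{N,\nu\}\leftrightarrow\{X,\tilde\nu\}$ that you describe.

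One small bookkeeping remark: as you have written it, the tangential piece contributes $+\langle\nabla^\Sigma H,Y^\top\rangle$, so summing with the Jacobi term yields $-\Delta u-(\operatorname{Ric}(N)+|A|^2)u+\langle\nabla^\Sigma H,Y\rangle$, whereas the stated formula carries a minus sign on the transport term. This is a convention/sign issue (and possibly a typo in the stated lemma), not a gap in your argument; since in every application in the paper $\Sigma$ has constant mean curvature, the term vanishes anyway. Just make sure your sign conventions for $N$, the flow direction, and the meaning of $\delta H$ are internally consistent when you write it out in full.
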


\begin{proof}
  See {\cite[Appendix A]{li-polyhedron-2020}}.
\end{proof}

\section{Evaluation of hyperbolic mass on polyhedra}\label{eval}
We show in this appendix that a component of the hyperbolic mass can be evaluated on a family of polyhedra enclosed by $Z(\vec{a},s)$ (see Theorem \ref{miao}). 

We say that a three-dimensional manifold $(M, g)$ is an asymptotically
hyperbolic manifold if outside a compact set $M$ is diffeomorphic to the
standard hyperbolic space $(\mathbb{H}^3, b)$ minus a geodesic ball and
\begin{equation*}
  |e|_{b} + | \bar{\nabla} e|_b + | \bar{\nabla} \bar{\nabla}
  e|_{b} = O (\mathrm{e}^{- \tau r}),
\end{equation*}
where $\bar{\nabla}$ is the connection on $\mathbb{H}^3$, $r$ is the distance
function to a fixed point $o$ and $\tau > \tfrac{3}{2}$ and $e = g - b$. We can without loss of generality assume that $M$ is
diffeomorphic to $\mathbb{H}^3$. 
We fix $o$ to be the point $(1,0, 0)$ in the upper half space model
\begin{equation*}
  b = \tfrac{1}{(x^1)^2} ((\mathrm{d} x^1)^2 + (\mathrm{d} x^2)^2 +
  (\mathrm{d} x^3)^2) .
\end{equation*}
Then from {\cite[Chapter A]{benedetti-lectures-1992}},
\begin{equation}
  2 \cosh r = \tfrac{1}{x^1} ((x^1)^2 + (x^2)^2 + (x^3)^2 + 1) .
  \label{distance formula}
\end{equation}

First, we show that by a simple computation, the surface $Z (\vec{a}, s)$ is totally
umbilic.

\begin{lemma}
  \label{umbilicity of Z}The second fundamental form of $Z (\vec{a}, s)$
  computed with respect to the unit normal $x^1 \vec{a}$ is $- a^1$ times of
  its induced metric.
\end{lemma}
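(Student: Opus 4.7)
The plan is to exploit the fact that $b = e^{2\phi}\delta$ with $\phi = -\log x^1$ is conformally Euclidean, and that $Z(\vec{a}, s)$ is a Euclidean affine plane whose second fundamental form with respect to the flat background $\delta$ vanishes identically. Thus the entire hyperbolic second fundamental form must come from the conformal deformation, and in particular it must be a scalar multiple of the induced metric.

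Concretely, I would first verify the choice of normal: since $|\vec{a}|_\delta = 1$ is the Euclidean unit normal to the affine plane $Z(\vec{a}, s)$, and $b(x^1 \vec{a}, x^1 \vec{a}) = (x^1)^{-2}(x^1)^2 = 1$, we see that $N := x^1 \vec{a}$ is the $b$-unit normal. Then, for $X$ tangent to $Z(\vec{a}, s)$, I would apply the standard conformal transformation formula
\begin{equation*}
  \nabla^b_X Y = \nabla^\delta_X Y + X(\phi)Y + Y(\phi)X - \delta(X,Y)\,\mathrm{grad}_\delta\phi
\end{equation*}
to $Y = N = x^1 \vec{a}$. The first term gives $\nabla^\delta_X(x^1 \vec{a}) = X(x^1)\vec{a}$ (since $\vec{a}$ is constant); the term $X(\phi)N$ contributes $-X(x^1)\vec{a}$, which cancels the first term; the term $N(\phi)X$ contributes $-a^1 X$; and the final term vanishes because $\delta(X, N) = x^1 (X\cdot \vec{a}) = 0$ as $X$ is Euclidean-tangent to $Z(\vec{a}, s)$. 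Therefore $\nabla^b_X N = -a^1 X$, and
\begin{equation*}
  A(X, Y) = b(\nabla^b_X N, Y) = -a^1\, b(X, Y),
\end{equation*}
as claimed.

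This is essentially a direct computation, so there is no real obstacle; the only thing requiring some care is sign conventions and the correct form of the conformal connection identity. The key observation that makes the proof short is that $X \cdot \vec{a} = 0$ for $X$ tangent to $Z(\vec{a}, s)$, which kills the one term that could otherwise depend on $s$ or on the point, leaving a pointwise constant multiple of the metric and hence pure umbilicity.
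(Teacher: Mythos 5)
Your proof is correct, and it follows a genuinely different (though equally elementary) route from the paper's. The paper invokes the identity $A = \tfrac{1}{2} L_N b$ restricted to $Z(\vec{a},s)$ for the unit normal $N = x^1 \vec{a}$ and reads off $-a^1 b$ from the derivative of the conformal factor in the direction $N$; the terms of $L_N b$ coming from the derivatives of the components of $N$ are dropped because they vanish when restricted to tangential directions, which is exactly the orthogonality $X \cdot \vec{a} = 0$ that you isolate. You instead use the conformal change formula for the Levi-Civita connection with $b = e^{2\phi}\delta$, $\phi = -\log x^1$, and compute the Weingarten map directly, obtaining $\nabla^b_X N = -a^1 X$ before contracting with $b$. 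Your version is a bit longer but more self-contained: it checks explicitly that $x^1\vec{a}$ is the $b$-unit normal, makes visible where the cancellation with $X(\phi)N$ occurs, and exhibits the full Weingarten map (so umbilicity is manifest, every tangent direction being principal), whereas the paper's Lie-derivative one-liner is faster but leaves the tangential restriction implicit. The signs agree with the paper's convention $A(X,Y) = \langle \nabla_X N, Y\rangle$, under which the horosphere $\{x^1 = 1\}$ has mean curvature $-2$ in the direction $x^1 \partial_{x^1}$, consistent with your formula at $a^1 = 1$.
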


\begin{proof}
  Denote $N = x^1 \vec{a}$. The second fundamental form of $Z (\vec{a}, s)$ is
  just $\tfrac{1}{2} L_N b$ restricted to $Z (\vec{a}, s)$, where $L_N b$ is
  the Lie derivative of the metric $b$. So the calculation
  \begin{equation*}
    \tfrac{1}{2} L_N b = \tfrac{1}{2} (x^1 a^i \partial_i) (\tfrac{1}{(x^1)^2}
    \delta) = - a^1 \tfrac{1}{(x^1)^2} \delta = - a^1 b
  \end{equation*}
  gives the assertion.
\end{proof}

The mass integrand for an asymptotically hyperbolic manifold (see
{\cite{chrusciel-mass-2003}}) is
\begin{equation*}
  \mathbb{U}(V)= V\ensuremath{\operatorname{div}}e - V \mathrm{d}
  (\ensuremath{\operatorname{tr}}_b e) +\ensuremath{\operatorname{tr}}_b e
  \mathrm{d} V - e (\bar{\nabla} V, \cdot) .
\end{equation*}
We consider the mass integrand integrated over 
 a family of polyhedra indexed by $q$, where the polyhedra are enclosed by $Z(\vec{a},s)$ (not necessarily the polyhedron given in Definitions \ref{cone} and \ref{prism}). We fix now $V=\tfrac{1}{x^1}$ and we assume that the component
$\mathbf{M} (V)$ of the hyperbolic mass functional satisfies the following
\[ \mathbf{M} (V) = \mathbf{M} (\tfrac{1}{x^1}) =\int_{\partial \boldsymbol{{\Delta}}_q} \mathbb{U}^i(\tfrac{1}{x^1})
   \bar{\nu}_i \mathrm{d} \bar{\sigma} + o (1), \label{mass} \]
where $\bar{\nu}$ is the $b$-normal to the face of $\boldsymbol{{\Delta}}_q$
and $\mathrm{d} \bar{\sigma}$ is the two dimensional volume element.

\

Such a family is easy to find. For example, according to
{\cite{chrusciel-mass-2003}} or {\cite{michel-geometric-2011}}, if each
polyhedron of the family is enclosed by a geodesic sphere and encloses another
geodesic sphere, the radius of each sphere goes to infinity as
$\boldsymbol{{\Delta}}_q$ exhaust the manifold $M$, then such a family provides an
example. Let $E_q$ be the all edges of $\boldsymbol{{\Delta}}_q$, $\alpha$ be the
dihedral angle formed by neighbouring faces along $E_q$. We denote by
$\mathrm{d} v$, $\mathrm{d} \sigma$ and $\mathrm{d} \lambda$ respectively the
three, two and one-dimensional volume elements. We put a bar over a letter to
indicate the quantity is calculated with respect to the background metric
$b$.

\begin{theorem}
  \label{miao} Assuming that every dihedral angle satisfies the bound $\sin
  \bar{\alpha} \geq c > 0$, then the mass $\mathbf{M} (V)$ is
\begin{align}
& \mathbf{M} (V) \nonumber\\
= & - \int_{\partial \boldsymbol{{\Delta}}_q} 2 V (H - \bar{H}) \mathrm{d}
\bar{\sigma} + 2 \int_{E_q} V (\alpha - \bar{\alpha}) \mathrm{d}
\bar{\lambda} \nonumber\\
& + \int_{\partial \boldsymbol{{\Delta}}_q} O (\cosh^{- 2 \tau + 1} r)
\mathrm{d} \bar{\sigma} + \int_{E_q} O (\cosh^{- 2 \tau + 1} r) \mathrm{d}
\bar{\lambda} + o (1) . \label{miao-piubello}
\end{align}
\end{theorem}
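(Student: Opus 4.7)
The plan is to follow the Miao-Piubello strategy adapted to the hyperbolic background, namely to rewrite the integrand $\mathbb{U}^{i}(V)\bar{\nu}_{i}$ on each face of $\boldsymbol{\Delta}_q$ as a linear combination of a mean curvature difference $-2V(H-\bar{H})$ plus a tangential divergence, and then to account, via integration by parts face by face, for the resulting boundary terms along the edges in $E_q$. Concretely, I would first parametrize a neighbourhood of each face $F_j$ by a Fermi-type coordinate adapted to $(b,\bar\nu_j)$, then compute $H-\bar H$ to first order in $e=g-b$. A standard calculation, using $\nabla V$ evaluated with respect to $b$ and the identities $L_{V\bar\nu} b$, $L_{\bar\nabla V} b$, produces the linearised identity
\begin{equation*}
\mathbb{U}^{i}(V)\bar{\nu}_{i}\big|_{F_j}=-2V(H-\bar H)+\operatorname{div}_{F_j}(V\,\omega_j)+\mathcal{R}_j,
\end{equation*}
where $\omega_j$ is a vector field on $F_j$ depending linearly on $e$ and its tangential derivatives, and the remainder $\mathcal{R}_j$ is quadratic in $e,\bar\nabla e$ and hence $O(\cosh^{-2\tau}r)$ by the asymptotic hypothesis. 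Multiplying through by $\mathrm{d}\bar\sigma$ and using $\tau>3/2$, the integral of $\mathcal{R}_j$ over $F_j$ contributes only to the stated $O(\cosh^{-2\tau+1}r)$ error.

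Next, I would apply the two-dimensional divergence theorem on each face $F_j$, which converts $\int_{F_j}\operatorname{div}_{F_j}(V\omega_j)\,\mathrm{d}\bar\sigma$ into a sum of edge integrals $\int_{F_j\cap F_k}V\langle\omega_j,\tilde\nu_{jk}\rangle\,\mathrm{d}\bar\lambda$, where $\tilde\nu_{jk}$ is the inward $b$-conormal to $F_j\cap F_k$ inside $F_j$. Summing the face contributions in \eqref{mass}, each edge $F_j\cap F_k\in E_q$ is counted from both adjacent faces, so the relevant quantity is $\langle\omega_j,\tilde\nu_{jk}\rangle+\langle\omega_k,\tilde\nu_{kj}\rangle$. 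The crucial geometric lemma is that, to leading order in $e$, this combined conormal flux equals $2(\alpha-\bar\alpha)$, where $\alpha$ is the $g$-dihedral angle between $F_j$ and $F_k$ and $\bar\alpha$ is its $b$-value; one proves it by writing $\alpha-\bar\alpha$ in terms of how the $b$-outward conormals $\bar X_j,\bar X_k$ vary when the inner product is perturbed from $b$ to $g$, which is precisely the linearised Schl\"afli contribution and matches the tangential traces of $e$ appearing in $\omega_j$. The quadratic remainder from this edge identity is $O(\cosh^{-2\tau+1}r)$ along edges of bounded $\bar{b}$-length, which is where the uniform lower bound $\sin\bar\alpha\geq c>0$ enters: it keeps $\tilde\nu_{jk}$ and $\tilde\nu_{kj}$ linearly independent and prevents the linearisation constants from blowing up at the corners.

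Assembling the pieces, the face integrals produce $-\int_{\partial\boldsymbol{\Delta}_q}2V(H-\bar H)\,\mathrm{d}\bar\sigma$ plus the edge sum $\int_{E_q}2V(\alpha-\bar\alpha)\,\mathrm{d}\bar\lambda$, together with the stated error terms, which is \eqref{miao-piubello}. The main obstacle is the derivation of the linearised identity $\langle\omega_j,\tilde\nu_{jk}\rangle+\langle\omega_k,\tilde\nu_{kj}\rangle=2(\alpha-\bar\alpha)+O(|e|^2)$ at the edges: unlike the interior face computation, which is parallel to the flat case up to lower order terms coming from $\bar\nabla V$, the edge identity requires careful bookkeeping of the tangential components of $e$ in the two-plane spanned by $\bar X_j,\bar X_k$ and of how $V$ changes along the edge. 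A second technical point is that the family $\boldsymbol{\Delta}_q$ need not be coordinate-friendly at infinity, so the errors must be controlled uniformly in $q$ using only $|e|_b+|\bar\nabla e|_b+|\bar\nabla^2 e|_b=O(e^{-\tau r})$ together with $V=1/x^1=O(e^{-r})$ and $\bar\nabla V=O(1)\cdot V$; this is where the bound $\sin\bar\alpha\geq c$ and the decay rate $\tau>3/2$ combine to ensure the $o(1)$ convergence as $\boldsymbol{\Delta}_q$ exhausts $M$.
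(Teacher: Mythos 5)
Your overall strategy coincides with the paper's (face identity, divergence theorem on each face, then the Miao--Piubello linearisation of the dihedral angle along the edges), but as written it has a genuine gap at the face-level identity. You assert that
$\mathbb{U}^{i}(V)\bar{\nu}_{i}=-2V(H-\bar H)+\operatorname{div}_{F}(V\omega)+\mathcal{R}$ with $\mathcal{R}$ \emph{quadratic} in $e$, and you describe the deviation from the flat case as ``lower order terms coming from $\bar\nabla V$''. In the hyperbolic setting this is not automatic and not lower order: the exact expansion (cf.\ \cite[(2.5)]{jang-hyperbolic-2021}, used in \eqref{mean curvature expansion}) contains the additional \emph{linear}-in-$e$ terms
\begin{equation*}
\bigl(\operatorname{tr}_b e - e(\bar\nu,\bar\nu)\bigr)\,\partial_{\bar\nu}V \;-\; V\,\langle \bar A, e\rangle_b,
\end{equation*}
which are of the same order as $V(H-\bar H)$ itself (note $|\mathrm{d}V|_b=V$). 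They disappear only because the faces lie on the planes $Z(\vec a,s)$, which are umbilic with $\bar A=-a^1\, b|_F$ (Lemma \ref{umbilicity of Z}), while $\partial_{\bar\nu}V=-a^1 V$ by \eqref{normal derivative of static potential}, so the two terms cancel exactly. This cancellation is the hyperbolic-specific heart of the theorem (it is why the faces must be pieces of $Z(\vec a,s)$ and why $V=1/x^1$ is the right potential); for generic hypersurfaces, or for Euclidean spheres say, the identity you claim is false. Your proposal never verifies (or even mentions) this step, so the claim that $\mathcal{R}$ is quadratic is unjustified as stated.

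A second, related error: you write $V=1/x^1=O(e^{-r})$. In fact $V$ \emph{grows} like $\cosh r$ along the exhausting polyhedra; this growth is precisely the source of the weight $\cosh^{-2\tau+1}r$ (one factor of $\cosh r$ from $V$ multiplying the quadratic error $O(\cosh^{-2\tau}r)$) and of the need for $\tau>\tfrac32$. With your stated decay the error would be $O(\cosh^{-2\tau-1}r)$, which is inconsistent with the bound you then quote, so the error bookkeeping has to be redone with the correct growth of $V$. The edge computation you outline (linearising $\cos\alpha=g(\nu_A,\nu_B)$ and matching it with the conormal fluxes $e(\bar\nu_A,\bar n_A)+e(\bar\nu_B,\bar n_B)$, with $\sin\bar\alpha\ge c$ controlling the linearisation) is exactly what the paper does by invoking \cite[(3.9)--(3.30)]{miao-mass-2021}, and is fine in spirit; once the face cancellation and the growth of $V$ are corrected, the rest of your argument assembles into \eqref{miao-piubello} as in the paper.
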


\begin{remark}
The theorem is actually valid in any dimension.
\end{remark}

By Lemma \ref{umbilicity of Z}, $\bar{H}$ is constant on each reference face.
The formula \eqref{miao-piubello} first appeared in Miao's article
{\cite{miao-measuring-2020}} in the settings of evaluating ADM mass on cubes
in an asymptotically flat manifold. Later this was generalized to more general
polyhedra in {\cite{miao-mass-2021}}. A special case of the polyhedron family
in \eqref{miao-piubello} is a family of parabolic cubes, Jang and Miao
{\cite{jang-hyperbolic-2021}} showed that the terms evaluated on the edges and
the remainder terms dropped out. In particular, $\mathbf{M} (V)$ can be
evaluated on horospheres only.

\begin{proof}[Proof of Theorem \ref{miao}]
  Let $F$ be a face of the polyhedron,
  $\bar{\nu}$ be the $b$-normal pointing outward of $\boldsymbol{{\Delta}}$,
  we see that
  \begin{equation*}
    \bar{\nu} = x^1 a^i \partial_i
  \end{equation*}
  where $a^i$ are constants and $a$ is a vector of length one under the
  Euclidean metric. The standard hyperbolic metric is conformal to the
  Euclidean metric in the upper half space model, so faces of
  $\boldsymbol{{\Delta}}$ meets at constant angles. Easily,
  \begin{equation}
    \partial_{\bar{\nu}} V = x^1 a^i \partial_i (\tfrac{1}{x^n}) = - a^1
    \tfrac{1}{x^1} = - a^1 V. \label{normal derivative of static potential}
  \end{equation}
  Note that $a^1 \in [- 1, 1]$ and the case of $a^1 = \pm 1$ is used in the
  works of Jang-Miao {\cite{jang-hyperbolic-2021}} and the author
  {\cite{chai-asymptotically-2021}} to evaluate the hyperbolic mass (note that \eqref{normal derivative of static potential} was actually first observed by Guo-Xia \cite{guo-partially-2020}, and it predates {\cite{jang-hyperbolic-2021}} and
  {\cite{chai-asymptotically-2021}}). 
  Each face $F$ is umbilic and the mean curvature is then $\bar{H} = - 2 a^1$ by Lemma \ref{umbilicity of Z}.
  Euclidean spheres are also umbilic in the hyperbolic metric, however, they
  do not satisfy the condition \eqref{normal derivative of static potential}.
  
  We have on each face $F$ of $\boldsymbol{{\Delta}}_q$,
  \begin{equation}
    2 V (H - \bar{H}) = -\mathbb{U}^i \bar{\nu}_i
    -\ensuremath{\operatorname{div}}_F (V X) + O (\mathrm{e}^{- 2 \tau r + r})
    \label{mean curvature expansion}
  \end{equation}
  where $X$ is the vector field dual to the 1-form $e (\bar{\nu}, \cdot)$ with
  respect to the metric $b|_F$. This is an easy consequence of
  {\cite[(2.5)]{jang-hyperbolic-2021}} that
\begin{align*}
& \mathbb{U} (\bar{\nu}) \\
= & 2 V (\bar{H} - H) -\ensuremath{\operatorname{div}}_F (V X) +
[(\ensuremath{\operatorname{tr}}_b e - e (\bar{\nu}, \bar{\nu})) \langle
\mathrm{d} V, \bar{\nu} \rangle - V \langle \bar{A}, e \rangle_b] + O
(\mathrm{e}^{- 2 \tau r + r}) .
\end{align*}
  From Lemma \ref{umbilicity of Z} and \eqref{normal derivative of static potential}, we obtain the desired formula \eqref{mean
  curvature expansion}. From \eqref{mean curvature expansion}, we have that
  \begin{equation*}
    \int_{\partial \boldsymbol{{\Delta}}_q} \mathbb{U}^i \bar{\nu}_i \mathrm{d}
    \bar{\sigma} = \int_{\partial \boldsymbol{{\Delta}}_q} [- 2 V (H - \bar{H})
    -\ensuremath{\operatorname{div}}_{\partial \boldsymbol{{\Delta}}_q} (V X)]
    \mathrm{d} \bar{\sigma} + o (1)
  \end{equation*}
  On each face $F$, using the divergence theorem
  \begin{equation*}
    \int_F \ensuremath{\operatorname{div}}_F (V X) \mathrm{d} \bar{\sigma} =
    \int_{\partial F} V e (\bar{\nu}, \bar{n}) \mathrm{d} \bar{\lambda},
  \end{equation*}
  where $\bar{n}$ is the $b$-normal to $\partial F$ in $F$. On the edge
  $F_A \cap F_B$, the contribution is
  \begin{equation*}
    \int_{F_A \cap F_B} V [e (\bar{\nu}_A, \bar{n}_A) + e (\bar{\nu}_B,
    \bar{n}_B)] \mathrm{d} \bar{\lambda} .
  \end{equation*}
  Let $g_{i j} = g (\partial_i, \partial_j)$, we have that
  \begin{equation*}
    \varepsilon_{i j} := (x^1)^2 e_{i j} = (x^1)^2 g_{i j} - (x^1)^2
    b_{i j} = (x^1)^2 b_{i j} - \delta_{i j} = O (\mathrm{e}^{-
    \tau r})
  \end{equation*}
  The $g$-normal to the face $F_A$ is then expressed as
  \begin{equation*}
    \nu_A = \frac{g^{i j} a_j \partial_j}{\sqrt{g^{k l} a_k a_l}} .
  \end{equation*}
  We write
\begin{align*}
\cos \theta = & g (\nu_A, \nu_B) \\
= & (a_i b_j g^{i j}) (g^{k l} a_k a_l)^{- \tfrac{1}{2}} (g^{p q} a_p
a_q)^{- \tfrac{1}{2}} \\
= & a_i b_j \tfrac{g^{i j}}{(x^1)^2} (\tfrac{g^{k l}}{(x^1)^2} a_k a_l)^{-
\tfrac{1}{2}} (\tfrac{g^{p q}}{(x^1)^2} a_p a_q)^{- \tfrac{1}{2}} .
\end{align*}
  Up to here, it follows from the same lines as in
  {\cite[(3.9)-(3.30)]{miao-mass-2021}} to show that
\begin{align*}
& \int_{F_A \cap F_B} V [e (\bar{\nu}_A, \bar{n}_A) + e (\bar{\nu}_B,
\bar{n}_B)] \mathrm{d} \bar{\lambda} \\
= & \int_{F_A \cap F_B} V (\bar{\alpha} - \alpha) + \int_{F_A \cap F_B} V
\cosh^{- 2 \tau} r \mathrm{d} \bar{\lambda} .
\end{align*}
  Therefore,
\begin{align*}
& \mathbf{M} (V) \\
= & - \int_{\partial \boldsymbol{{\Delta}}_q} 2 V (H - \bar{H}) \mathrm{d}
\bar{\sigma} + 2 \int_{E_q} V (\alpha - \bar{\alpha}) \mathrm{d}
\bar{\lambda} \\
& + \int_{\partial \boldsymbol{{\Delta}}_q} \cosh^{- 2 \tau + 1} r \mathrm{d}
\bar{\sigma} + \int_{E_q} \cosh^{- 2 \tau + 1} r \mathrm{d} \bar{\lambda}
+ o (1),
\end{align*}
  obtaining the theorem.
\end{proof}

\section{Proof of Proposition \ref{prop_energy_est}}\label{proof of minimiser in general tetrahedron}
\

Before giving the proof of Proposition \ref{prop_energy_est}, we give a result about positive-definite matrices that we use later on.

\begin{lemma}
	\label{lem_metric_matrix}
	Given any vector $b=(b_1,b_2,b_3)\in [-1,1]^3$, we define the matrix $G=G_b$ as
	\[
		G_b:=
		\begin{bmatrix}
			1 & b_3 & b_2\\
			b_3& 1 & b_1 \\
			b_2& b_1 & 1 \\
		\end{bmatrix}.
	\]

	Then $G_b$ is positive-definite if and only if $\beta:=(\arccos b_i)_{i=1}^3$ satisfies the following inequalities.
	\begin{enumerate}
		\item $\beta_i+\beta _{i+1}>\beta _{i+2}$ for all $1\le i\le 3$.
		\item $\sum_{i =1}^{3}\beta_i<2\pi$.
	\end{enumerate}

	We call these inequalities the \textit{spherical triangle inequalities}.
\end{lemma}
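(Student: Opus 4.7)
The cleanest way to see this is via the geometric interpretation: $G_b$ is the Gram matrix of three (candidate) unit vectors $v_1,v_2,v_3\in\mathbb{R}^3$ with pairwise inner products $v_i\cdot v_j=b_k$ (for $\{i,j,k\}=\{1,2,3\}$), so that $\beta_k=\arccos b_k$ is the spherical distance between $v_i$ and $v_j$ on $\mathbb{S}^2$. Positive-definiteness of $G_b$ is equivalent to saying that such a Gram realization exists with $v_1,v_2,v_3$ linearly independent, i.e., to saying that $\beta_1,\beta_2,\beta_3$ are the three sides of a non-degenerate spherical triangle. Classical spherical trigonometry then identifies the conditions for three prescribed arc-lengths to form the sides of a genuine spherical triangle with exactly the inequalities (1) and (2). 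I will turn this into a direct computation.

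The plan is to reduce to Sylvester's criterion. The three $1\times 1$ principal minors are $1>0$; the three $2\times 2$ principal minors are $1-b_i^2=\sin^2\beta_i$; and the $3\times 3$ minor is
\begin{equation*}
\det G_b=1-b_1^2-b_2^2-b_3^2+2b_1b_2b_3.
\end{equation*}
Substituting $b_i=\cos\beta_i$ and setting $s=\tfrac12(\beta_1+\beta_2+\beta_3)$, a short product-to-sum manipulation yields the classical identity
\begin{equation*}
\det G_b=4\sin s\,\sin(s-\beta_1)\,\sin(s-\beta_2)\,\sin(s-\beta_3).
\end{equation*}
Since $G_b$ is symmetric, positive-definiteness is equivalent to positivity of all principal minors; in our case, this is equivalent to $\sin^2\beta_i>0$ for $i=1,2,3$ together with $\det G_b>0$.

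In the forward direction, I would assume inequalities (1) and (2). They immediately give $\beta_i\in(0,\pi)$ (if $\beta_i=0$ then (1) forces $\beta_j=\beta_k$ and equality in (1), contradicting strictness; if $\beta_i=\pi$ then (2) gives $\beta_j+\beta_k<\pi$, contradicting (1) with $i$ in the role of the largest). Consequently $\sin^2\beta_i>0$. Also, $s\in(0,\pi)$ by (2), and $s-\beta_i=\tfrac12(\beta_j+\beta_k-\beta_i)\in(0,\tfrac{\pi}{2})$ by (1) combined with $\beta_j,\beta_k<\pi$, so every factor in the identity is strictly positive and $\det G_b>0$. Hence $G_b$ is positive-definite.

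For the converse, positive-definiteness forces every $2\times 2$ principal minor $\sin^2\beta_i$ to be strictly positive, giving $\beta_i\in(0,\pi)$, and it forces $\det G_b>0$. The main (only) subtle step is the sign analysis: I need to exclude the case where $\sin s<0$ with an even number of negative $\sin(s-\beta_i)$. Since $\beta_i<\pi$, we have $s-\beta_i=\tfrac12(\beta_j+\beta_k-\beta_i)<\pi$ automatically, and since $\beta_j+\beta_k>0$, we have $s-\beta_i>-\tfrac{\pi}{2}$; moreover at most one triangle inequality can fail, because if $\beta_j+\beta_k\le \beta_i$ and $\beta_i+\beta_k\le\beta_j$ were to hold simultaneously, adding them would give $2\beta_k\le 0$. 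Thus among the three quantities $\sin(s-\beta_i)$ at most one is non-positive, so the product $\prod_i\sin(s-\beta_i)$ has the sign of $+1$ whenever the triangle inequalities hold and the sign of $-1$ whenever exactly one fails; combined with $\det G_b>0$ this forces $\sin s$ to have the correct sign, and a short check shows the only consistent possibility is $\sin s>0$ with all $\sin(s-\beta_i)>0$. Unwinding, this gives (1) and (2), completing the equivalence.
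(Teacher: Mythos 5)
You take a genuinely different route from the paper. The paper's proof is exactly the geometric interpretation you open with: a positive-definite $G_b$ factors as $\Omega\Omega^t$, so its rows are linearly independent unit vectors $n_1,n_2,n_3$ with $n_{i+1}\cdot n_{i+2}=b_i$, i.e.\ the vertices of a non-degenerate spherical triangle with side lengths $\beta_i$; the converse reverses this, and the paper simply cites the classical fact that three arc lengths $\beta_i\in(0,\pi)$ bound a non-degenerate spherical triangle iff the spherical triangle inequalities hold. Your version replaces that appeal to spherical trigonometry by Sylvester's criterion together with the Cagnoli-type identity $\det G_b=4\sin s\,\prod_i\sin(s-\beta_i)$ with $s=\tfrac{1}{2}\sum_i\beta_i$; this is self-contained and computational, which is a genuine gain, at the cost of a sign analysis.

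The forward direction is fine modulo a harmless slip: $s-\beta_i=\tfrac{1}{2}(\beta_j+\beta_k-\beta_i)$ lies in $(0,\pi)$, not $(0,\tfrac{\pi}{2})$ (take $\beta_j,\beta_k\to\pi$, $\beta_i\to 0$), but $\sin(s-\beta_i)>0$ either way. The converse, however, has a real gap at precisely the step you flag as ``the main (only) subtle'' one. From $\det G_b>0$ and the identity, two a priori consistent sign patterns remain: (i) $\sin s>0$ and all $\sin(s-\beta_i)>0$, which is the desired conclusion; or (ii) $\sin s<0$ and exactly one $\sin(s-\beta_i)<0$ (note: it is an \emph{odd} number of negative factors alongside $\sin s<0$, not an even number as you wrote, that needs excluding). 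You assert ``a short check shows the only consistent possibility is (i)'' without performing it, and that check is the entire content of the converse. The missing line: if exactly one triangle inequality fails, say $\beta_1>\beta_2+\beta_3$, then $\beta_1<\pi$ forces $\beta_1+\beta_2+\beta_3<2\beta_1<2\pi$, hence $s<\pi$ and $\sin s>0$; combined with $\prod_i\sin(s-\beta_i)<0$ this gives $\det G_b<0$, a contradiction. Insert this and your argument closes.
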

\begin{proof}[Proof of Proposition \ref{prop_energy_est}]
	"If" part is quite easy since for $\beta \in [0,\pi]^3$ satisfying spherical triangle inequalities, we can find a non-degenerate spherical convex triangle $n_1n_2n_3$ such that $\left|\widefrown{n_in_{i+1}}\right|=\beta _{i+2}$.
	Since $\left\{ n_i \right\}_{i=1}^3$ forms a basis of $\mathbb{R}^3 $, we know the matrix $(g_{ij})$ defined by $g_{ij}=n_i\cdot n_j$ is positive-definite.

	For the "only if" part, since $G$ is positive-definite, we can decompose $G=\Omega \Omega^t$.
	Here, we use small $t$ to denote the transpose of a matrix.
	We write
	\[
		\Omega=
		\begin{bmatrix}
			n_1 \\ n_2 \\ n_3
		\end{bmatrix}.
	\]

	Clearly $n_i \in \mathbb{S}^2$ and $b_i=n_{i+1}\cdot n_{i+2}$.
	Note that $\left\{ n_i \right\}_{i=1}^3$ also forms a basis of $\mathbb{R}^3 $, and $n_1n_2n_3$ would be a non-degenerate spherical convex triangle if we joint $n_i, n_{i+1}$ using minor geodesic arc for each $1\le i\le 3$.
	Hence $\beta$ satisfies the spherical triangle inequalities.
\end{proof}

\begin{remark}
	Note that the spherical triangle inequalities automatically imply $\beta_i \in (0,\pi)$.
	Hence if $G_b$ is positive-definite, we know $-1<b_i<1$ for $1\le i\le 3$.
	\label{rmk_simple_result}
\end{remark}

\begin{proof}[Proof of Proposition \ref{prop_energy_est}]
	For convenience, we relabel $u_i$ such that $u_i=\frac{n_{i+1}\times n_{i+2}}{\left|n_{i+1}\times n_{i+2}\right|}$. 
	Since $\left\{ n_i \right\}_{i=1}^3$ is a basis of $\mathbb{R}^3 $, we can choose $\left\{ n^i \right\}_{i=1}^3$ as the dual basis of $\left\{ n_i \right\}_{i=1}^3$. In other word, we have $u^i \cdot u_j=\delta^i_j$.
	Note that we have $u_i=-\frac{n^i}{\left|n^i\right|}$ and $n_{i+1}\times n_{i+2}=n^i \mathrm{det}(n_1,n_2,n_3)$.
	In particular, we use the following conventional notations for superscript and subscript as
	\[
		\xi_i=\xi \cdot n_i,\quad \xi^i=\xi \cdot n^i,\quad \text{ and }\bar{\xi}_i=\xi \cdot \bar{n}_i,\quad \bar{\xi}^i=\bar{\xi}\cdot \bar{n}^i,
	\]
	where $\left\{ \bar{n}^i \right\}_{i=1}^3$ is the dual basis of $\left\{ \bar{n}_i \right\}_{i=1}^3$.

	Using Lemma \ref{lem_energy_compute}, we have,
	\begin{align}
		E(P)={}&\sum_{i =1}^{3}\left|F_i \cap P\right|(\cos \alpha_i-\cos \bar{\alpha}_i)\nonumber\\
		={}&\frac{1}{2}\sum_{i =1}^{3}\left|\overrightarrow{OA_{i+1}}\times \overrightarrow{OA_{i+2}}\right|(\xi\cdot n_i-\bar{\xi}\cdot \bar{n}_i)\nonumber\\
		={}& \frac{1}{2}\sum_{i =1}^{3} \frac{\left|n ^{i+1}\times n ^{i+2}\right|}{\left( n^{i+1}\cdot \xi \right)
		\left( n^{i+2}\cdot \xi \right)}(\xi_i-\bar{\xi}_i) \nonumber \\
		={}& \frac{1}{2}\sum_{i =1}^{3}\frac{\left|n_i\right|(\xi_i-\bar{\xi}_i)(n^i\cdot \xi)}{\left|\mathrm{det}(n_1,n_2,n_3)\right|\left( n^{1}\cdot \xi \right)
		\left( n^{2}\cdot \xi \right)\left( n^{3}\cdot \xi \right)} \nonumber \\
		={}& \frac{1}{2\left|\mathrm{det}(n_1,n_2,n_3)\right|\xi^1\xi^2\xi^3}\sum_{i =1}^{3}(\xi^i\xi_i-\xi^i\bar{\xi}_i)
	\end{align}

	Now we define two matrices $G$ and $\bar{G}$ by $G=(g_{ij})_{i,j=1}^3, \bar{G}=(\bar{g}_{ij})_{i,j=1}^3$ with $g_{ij}=n_i\cdot n_j, \bar{g}_{ij}=\bar{n}_i\cdot \bar{n}_j$. For convenience, we choose $b=(b_1,b_2,b_3), \bar{b}=(\bar{b}_1,\bar{b}_2,\bar{b}_3)$ such that
	\[
		G=
		\begin{bmatrix}
			1 & b_3 & b_2\\
			b_3& 1 & b_1 \\
			b_2& b_1 & 1.
		\end{bmatrix},\quad 
		\bar{G}=
		\begin{bmatrix}
			1 & \bar{b}_3 & \bar{b}_2\\
			\bar{b}_3& 1 & \bar{b}_1 \\
			\bar{b}_2& \bar{b}_1 & 1.
		\end{bmatrix}.
	\]

	Note that $b_i=-\cos \theta_i, \bar{b}_i=-\cos \bar{\theta}_i$ here.
	Then we know
	\[
		\sum_{i =1}^{3}\xi^i\xi_i-\xi^i \bar{\xi}_i=
		\sum_{i =1}^{3}\xi^i g_{ij}\xi^j-\xi^i \bar{g}_{ij}\bar{\xi}^j.
	\]
	
	Note that $\xi\cdot u_i< 0$ is equivalent to $\xi^i>0$, we can rewrite our problem into the matrix form.

	\begin{proposition}
	Let $b, \bar{b} \in [-1,1]^3$ be two vectors such that $b_i\le \bar{b}_i$ for all $1\le i\le 3$ and there exists at least one $i$ such that $b_i < \bar{b}_i$.
	We also need to assume $G,\bar{G}$ are all positive-definite matrices.
	Then, for any $\bar{\xi}=(\bar{\xi}^1,\bar{\xi}^2,\bar{\xi}^3)\in (0,+\infty)^3$ with $\bar{\xi} \bar{G}\bar{\xi}^t=1$, we can find $\xi \in (0,+\infty)^3$ with $\xi G\xi^t=1$ such that
	\begin{equation}
		1-\xi \bar{G} \bar{\xi}^t=\xi G \xi^t-\xi \bar{G}\bar{\xi}^t<0.
		\label{eq_prop_matrix_form}
	\end{equation}
	\label{prop_est_pri}
	\end{proposition}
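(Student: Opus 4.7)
The plan is to construct the required $\xi$ explicitly as a positive scalar rescaling of $\bar{\xi}$. Concretely, set
\[
\lambda := \sqrt{\bar{\xi}\, G\, \bar{\xi}^t}, \qquad \xi := \bar{\xi}/\lambda,
\]
interpreting this componentwise as $\xi^i := \bar{\xi}^i/\lambda$. Then $\xi^i > 0$ for each $i$, and by construction $\xi G \xi^t = \lambda^{-2}\,\bar{\xi} G \bar{\xi}^t = 1$. The whole argument then reduces to showing $\lambda < 1$.

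To this end, note that the matrix $\bar{G} - G$ has zero diagonal (since $G_{ii} = \bar{G}_{ii} = 1$) and componentwise nonnegative off-diagonal entries by the hypothesis $b_i \le \bar{b}_i$, with at least one strictly positive off-diagonal entry by the strict inequality at some index. Because $\bar{\xi}^i > 0$ for every $i$, the quadratic form $\bar{\xi}(\bar{G} - G)\bar{\xi}^t$ is a sum of nonnegative terms with at least one strictly positive term:
\[
\bar{\xi}(\bar{G} - G)\bar{\xi}^t = 2\sum_{i<j}(\bar{g}_{ij}-g_{ij})\bar{\xi}^i \bar{\xi}^j > 0.
\]
Since $\bar{\xi}\bar{G}\bar{\xi}^t = 1$, this gives $\lambda^2 = \bar{\xi} G \bar{\xi}^t < 1$, hence $\lambda < 1$.

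From the definition of $\xi$ it now follows that
\[
\xi\,\bar{G}\,\bar{\xi}^t = \lambda^{-1}\,\bar{\xi}\bar{G}\bar{\xi}^t = \lambda^{-1} > 1,
\]
which rearranges to $\xi G \xi^t - \xi\bar{G}\bar{\xi}^t = 1 - \lambda^{-1} < 0$, exactly the inequality (\ref{eq_prop_matrix_form}).

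The main point here is really conceptual rather than technical: once one spots the rescaling trick, no real obstacle remains. Its effectiveness depends crucially on the fact that for $k=3$ every pair $(i,j)$ with $i\neq j$ corresponds to an adjacent pair of faces, so the hypothesis ``less dihedral angle'' translates into a componentwise inequality on the \emph{entire} Gram matrix. This is precisely the structural feature that fails for $k \ge 4$: as illustrated by Example \ref{tetrahedron example}, for $k \ge 4$ the Gram matrix $G$ contains off-diagonal entries $g_{ij}$ between non-adjacent faces which are not constrained by the dihedral angle hypothesis, so one no longer has $\bar{\xi}(\bar{G}-G)\bar{\xi}^t \ge 0$ and the argument collapses.
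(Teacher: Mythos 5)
Your proof is correct, and it takes a genuinely different and much more elementary route than the paper's. The paper proves Proposition \ref{prop_est_pri} by taking the constrained optimizer: it sets $\omega=\bar{\xi}\bar{G}G^{-1}$, normalizes $\xi_0=\omega/\sqrt{\omega G\omega^t}$ (the Lagrange-multiplier minimizer of $1-\xi\bar{G}\bar{\xi}^t$ on $\{\xi G\xi^t=1\}$), and then must split into cases according to whether $\omega$ stays in the closed positive orthant; the bad case forces a restriction to a coordinate face and a long computation (positive definiteness via the spherical triangle inequalities of Lemma \ref{lem_metric_matrix}, the claim $\det(G)+\zeta q^t>0$, the terms $F_1,F_2$, etc.). You instead exhibit a single explicit admissible test vector: since $H=\bar{G}-G$ has zero diagonal, nonnegative off-diagonal entries, and at least one strictly positive entry, and $\bar{\xi}$ has strictly positive components, $\bar{\xi}H\bar{\xi}^t>0$, so $\lambda^2:=\bar{\xi}G\bar{\xi}^t\in(0,1)$ by positive definiteness of $G$, and the rescaled vector $\xi=\bar{\xi}/\lambda$ satisfies $\xi G\xi^t=1$, $\xi\in(0,+\infty)^3$, and $\xi\bar{G}\bar{\xi}^t=1/\lambda>1$, which is exactly \eqref{eq_prop_matrix_form}. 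Since the proposition (and its later uses, both for producing a negative-energy competitor near the apex and in the rigidity argument) only requires \emph{some} admissible $\xi$ with negative value rather than the optimal one, your argument suffices and eliminates the paper's entire case analysis; note that the positivity mechanism you use, $\bar{\xi}H\bar{\xi}^t>0$, is the same one the paper invokes in its "first case." What the paper's longer route buys is the identification of the actual minimizer and the sharpest value of the energy, which is of independent interest (e.g., as a natural candidate for attacking Conjecture \ref{conj_rig_pyramid} for $k\ge 4$), but it is not needed for the statement as given. Your closing remark is also consistent with the paper: for $k\ge4$ the reduction to this Gram-matrix form already breaks down (the normals no longer form a basis, and the Gram matrix contains entries not controlled by the dihedral-angle hypothesis), in line with Propositions \ref{prop_quad_1} and \ref{prop_quad_2}.
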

	
	Here, we view $\xi, \bar{\xi}$ as 3-dimensional vectors.

	Now, we prove Proposition \ref{prop_est_pri}.
	Note that by the continuity of $1-\xi \bar{G}\xi^t $, we can relax the condition $\xi \in (0,+\infty)^3$ to $\xi \in [0,+\infty)$.

	We choose $\omega=\bar{\xi}\bar{G}G^{-1}$ and $\xi_0=\frac{\omega}{\sqrt{\omega G \omega^t}}$.
	(Here, $\xi_0$ is the minimiser of function $1-\xi \bar{G}\bar{\xi}^t$ under condition $\xi G \xi^t=1$. Hence, $\xi_0$ is the best $\xi$ that we want.)

	\ 

	\noindent\textbf{First Case.} $\omega^i\ge 0$ for all $1\le i\le 3$.

	We only need to show $1-\xi_0 \bar{G}\bar{\xi}^t<0$, which is equivalent to
	\begin{equation}
		\bar{\xi} \bar{G} G^{-1} \bar{G}\bar{\xi}^t>\bar{\xi}\bar{G}\bar{\xi}^t.\label{eq_pf_comp_matrix}
	\end{equation}

	To prove \eqref{eq_pf_comp_matrix}, we write $h=\bar{b}-b$ and $H=\bar{G}-G$.
	Note that we know
	\[
		H=
		\begin{bmatrix}
			0 & h_3 & h_2\\
			h_3& 0 & h_1 \\
			h_2& h_1 & 0
		\end{bmatrix},
	\]
	and $h$ is a non-zero vector with non-negative entries.

	Hence, we find
	\begin{align*}
		 {} & \bar{\xi}\bar{G}G^{-1}\bar{G}\bar{\xi}^t-\bar{\xi}\bar{G}\bar{\xi}^t=\bar{\xi}(G+H)G^{-1}(G+H)\bar{\xi}^t-\bar{\xi}(G+H)\bar{\xi}^t \\
		={} & \bar{\xi}H\bar{\xi}^t+\bar{\xi}H G^{-1}H\bar{\xi}^t.
	\end{align*}

	Since $\bar{\xi}^i>0$ for all $i$ and $h$ is non-zero, we know $\bar{\xi}H\bar{\xi}^t>0$.
	We also note that $G^{-1}$ is positive-definite.
	Therefore, we know $\xi HG^{-1}H\xi^t\ge 0$.

	This finishes the proof of the inequality (\ref{eq_pf_comp_matrix}).

	\ 

	\noindent\textbf{Second Case.} At least one of $\omega^i<0$. 

	Without loss of generality, we assume $\omega^3<0$.

	To choose a suitable vector $\xi_0$, we rewrite the metric $G,\bar{G},H$ into block form as
	\[
		G=
		\begin{bmatrix}
			G_0 & p^t\\
			p & 1 \\
		\end{bmatrix},\quad 
		\bar{G}=
		\begin{bmatrix}
			\bar{G}_0 & \bar{p}^t\\
			\bar{p} & 1
		\end{bmatrix},\quad 
		H=
		\begin{bmatrix}
			H_0 & q^t \\
			q & 0
		\end{bmatrix}.
	\]
	Here, we have used $p=(b_2,b_1)$, $\bar{p}=(\bar{b}_2,\bar{b}_1)$, $q=(h_2,h_1)$.

	Now we choose
	\[
		\bar{\omega}=(\bar{\omega}^1, \bar{\omega}^2):=\bar{\xi}
		\begin{bmatrix}
			\bar{G}_0 \\ \bar{p}
		\end{bmatrix}
		G_0^{-1}.
	\]
	
	We want to show $\xi_0:= \frac{(\bar{\omega},0)}{\sqrt{(\bar{\omega},0)G(\bar{\omega},0)^t}}$ is the $\xi$ such that (\ref{eq_prop_matrix_form}) holds and $\bar{\omega}^i\ge 0$ for each $i=1,2$.
	Note that we can verify that $\xi_0$ is the minimiser of function $1-\xi \bar{G}\bar{\xi}^t$ under conditions $1-\xi G\xi^t=0$ and $\xi^3=0$ using the method of Lagrange multipliers.

	We divide the proof into three steps.

	\ 

	\noindent\textbf{First Step.} We rewrite $\omega^3<0$ into another suitable form.

	We compute $\omega^3$ as follows.
	\begin{align*}
		{} & \omega^3= \bar{\xi}^3+\sum_{i,j=1 }^{3}g^{3i}h_{ij}\bar{\xi}^j=\bar{\xi}^3+\frac{1}{\mathrm{det}(G)}
		(A_{2},A_{1},1-b_3^2)
		\begin{bmatrix}
			H_0 & q^t\\
			q & 0
		\end{bmatrix}
		\begin{bmatrix}
			\bar{\eta}^t\\
			\bar{\xi}^3 \\
		\end{bmatrix}.
	\end{align*}
	Here, we have used $A_{i}=b_{i+1}b_{i+2}-b_i$, $\eta=(\bar{\xi}^1,\bar{\xi}^2)$ for simplicity.
	To further simplify our inequality, we denote $\zeta=(A_{2},A_{1})$.

	Collect the coefficients for $\bar{\xi}^3$ and $\bar{\eta}$, we find $\omega^3<0$ is equivalent to the following inequality.
	\begin{equation}
		\left(\mathrm{det}(G)+\zeta q^t\right)\bar{\xi}^3+(\zeta H_0+(1-b_3^2)q)\bar{\eta}^t<0.
		\label{eq_pf_w3}
	\end{equation}

	We rewrite it as
	\begin{equation}
		-\frac{\zeta q^t \bar{\xi}^3+\zeta H_0 \bar{\eta}^t}{1-b_3^2}>\frac{\mathrm{det}(G)\bar{\xi}^3}{1-b_3^2}+q\bar{\eta}^t
		\label{eq_pf_condition}
	\end{equation}
	for later application.

	\ 

	\noindent\textbf{Second Step.}
	Inequality (\ref{eq_prop_matrix_form}) holds if we choose $\xi=\xi_0$.
	After substituting $\xi_0$ using the definition of $\bar{\omega}$, we find (\ref{eq_prop_matrix_form}) is equivalent to the following inequality,
	\[
		\bar{\xi}
		\begin{bmatrix}
			\bar{G}_0 \\ \bar{p}
		\end{bmatrix}G^{-1}_0
		\begin{bmatrix}
			\bar{G}_0 & \bar{p}^t
		\end{bmatrix}\bar{\xi}^t>1=\bar{\xi}\bar{G}\bar{\xi}^t.
	\]

	We compute
	\begin{align*}
		 E={} & 
		 \begin{bmatrix}
			 \bar{\eta} &\bar{\xi}^3
		 \end{bmatrix}
		\begin{bmatrix}
			G_0+H_0 \\ \bar{p}
		\end{bmatrix}G^{-1}_0
		\begin{bmatrix}
			G_0+H_0 & \bar{p}^t
		\end{bmatrix}
		\begin{bmatrix}
			\bar{\eta}^t\\ \bar{\xi}^3
		\end{bmatrix}-\begin{bmatrix}
			 \bar{\eta} &\bar{\xi}^3
		 \end{bmatrix}
		 \begin{bmatrix}
			 G_0+H_0 & \bar{p}^t\\
			\bar{p} & 1 \\
		 \end{bmatrix}\begin{bmatrix}
			\bar{\eta}^t\\ \bar{\xi}^3
		\end{bmatrix}
		\\
		={} & \bar{\eta}H_0\bar{\eta}^t+\bar{\eta}H_0 G_0^{-1}H_0\bar{\eta}^t+2\bar{\xi}^3
		\bar{p}G^{-1}_0H_0\bar{\eta}^t+\left(\bar{\xi}^3\right)^2(\bar{p}G^{-1}_0\bar{p}^t-1).
	\end{align*}

	Note that
	\[
		\bar{\eta}H_0\bar{\eta}^t+\bar{\eta}H_0G_0^{-1}H_0\bar{\eta}^t\ge 0
	\]
	and it is strictly greater than 0 if $\bar{b}_3>b_3$ by the argument in the first case. Hence, we only need to focus on the term
	\[
		E_2:=2\bar{\xi}^3
		\bar{p}G^{-1}_0H_0\bar{\eta}^t+\left(\bar{\xi}^3\right)^2(\bar{p}G^{-1}_0\bar{p}^t-1).
	\]

	Substitute $\bar{p}$ by $p+q$ and note $pG_0^{-1}=-\frac{\zeta}{1-b_3^2}$, we have
	\begin{align}
		E_2={} & -\frac{2\bar{\xi}^3 \zeta H_0 \bar{\eta}^t}{1-b_3^2} +2\bar{\xi}^3 q G^{-1}_0 H_0\bar{\eta}^t+\left(\bar{\xi}^3\right)^2\left(- \frac{2\zeta q^t}{1-b_3^2}-\frac{\zeta p^t}{1-b_3^2}+qG_0^{-1}q^t-1\right)  \nonumber \\
		>{} & 2\bar{\xi}^3q\bar{\eta}^t+2
		\bar{\xi}^3qG_0^{-1}H_0 \bar{\eta}^t
		+\left(\bar{\xi}^3\right)^2
		\left( \frac{2\mathrm{det}(G)-\zeta p^t}{1-b_3^2} +qG_0^{-1}q^t-1\right).
		\label{eq_pf_E2_term}
	\end{align}
	Here, we have used condition (\ref{eq_pf_condition}) and $\bar{\xi}^3>0$.

	Now, we note that $q+qG_0^{-1}H_0$ can be written down explicitly as
	\begin{align}
		q+qG^{-1}_0H_0={}&(h_2,h_1)+\frac{(h_1h_3-h_2h_3b_3,h_2h_3-h_1h_3b_3)}{1-b_3^2}\nonumber \\
		={}& \frac{(h_1h_3,h_2h_3)}{1-b_3^2}+
		\frac{(h_2(1-b_3\bar{b}_3), h_1(1-b_3 \bar{b}_3))}{1-b_3^2}.\label{eq_pf_qGH}
	\end{align}

	Recall Remark \ref{rmk_simple_result}, we know the vector $q+qG^{-1}_0H_0$ has non-negative entries.
	Hence
	\begin{equation}
		2\bar{\xi}^3q \bar{\eta}^t+2\bar{\xi}^3qG_0^{-1}H_0\bar{\eta}^t\ge 0.
		\label{eq_pf_E21_term}
	\end{equation}

	For the last term, we have
	\begin{align}
		 {} & 2\mathrm{det}(G)-\zeta p^t+(1-b_3^2)qG_0^{-1}q^t -(1-b_3^2)
		=\mathrm{det}(G)+(1-b_3^2)qG_0^{-1}q^t>0.
		\label{eq_pf_E22_term}
	\end{align}
	Here, we have used $\mathrm{det}(G)=1-b_3^2+\zeta p^t$ and $G_0^{-1}$ is a positive-definite matrix.
	Combining inequalities (\ref{eq_pf_E2_term}), (\ref{eq_pf_E21_term}) and (\ref{eq_pf_E22_term}), we get $E_2>0$ and hence $E>0$.

	\ 

	\noindent\textbf{Third Step.} Show that $\bar{\omega}^1\ge 0, \bar{\omega}^2\ge 0$. By the symmetric property of $\bar{\omega}^1$ and $\bar{\omega}^2$, we only need to show $\bar{\omega}^1\ge 0$. 

	By definition of $\bar{\omega}$, we know
	\[
		\bar{\omega}= \bar{\eta}+\bar{\eta}H_0 G_0^{-1}+\frac{\bar{\xi}^3(\bar{b}_2-\bar{b}_1b_3,\bar{b}_1-\bar{b}_2b_3)}{1-b_3^2}.
	\]

	At first, we note $\bar{\eta}+\bar{\eta}H_0G_0^{-1}$ has non-negative entries using similar calculation for (\ref{eq_pf_qGH}).
	If it happens that the term $\bar{b}_2-\bar{b}_1b_3$ is non-negative, we know $\bar{\omega}^1\ge 0$ easily.

	Hence, we focus the case $\bar{b}_2-\bar{b}_1b_3<0$.
	If we write $\beta_i=\arccos b_i$ and $\bar{\beta}_i=\arccos \bar{b}_i$, we can find $\bar{\beta}_1+\bar{\beta}_2>\beta_3$.
	This can be seen by a simple argument.
	If we suppose $\bar{\beta}_2\le \beta_3-\bar{\beta}_1$ on the contrary, since we know $\bar{\beta}_2, \beta_3-\bar{\beta}_1 \in (0,\pi)$, then
	\begin{align*}
		\cos(\bar{\beta}_1)\cos(\beta_3)>{} & \cos(\bar{\beta}_2)\ge \cos(\beta_3-\bar{\beta}_1)= \cos \beta_3 \cos \bar{\beta}_1+\sin \beta_3 \sin \bar{\beta}_1\\
		>{} & \cos \beta_3 \cos \bar{\beta}_1,
	\end{align*}
	since $\bar{\beta}_1,\beta_3 \in (0,\pi)$.
	This is a contradiction.

	Now, we want to use condition (\ref{eq_pf_w3}) to finish the proof.
	At first, we show the coefficient for $\bar{\xi}^3$ is positive.

	\noindent\textbf{Claim.} $\mathrm{det}(G)+\zeta q^t>0$.

	To prove this claim, we write $\tilde{\beta} =(\bar{\beta}_1,\bar{\beta}_2,\beta_3)$. 
	Then, we know $\tilde{\beta}$ satisfies spherical triangle inequalities since $\beta$ and $\bar{\beta}$ satisfy spherical triangle inequalities, and $\beta^i\ge \bar{\beta}^i$ for $1\le i\le 3$.

	By Lemma \ref{lem_metric_matrix}, we know the matrix $\tilde{G}$ defined by
	\[
		\tilde{G}:=
		\begin{bmatrix}
			1 & b_3 & \bar{b}_2\\
			b_3 & 1 & \bar{b}_1 \\
			\bar{b}_2 & \bar{b}_1 & 1 \\
		\end{bmatrix}
	\]
	is positive-definite.
	In particular, we know $\frac{\tilde{G}+G}{2}$ is positive-definite.
	Then $\mathrm{det}(\frac{\tilde{G}+G}{2})>0$ implies 
	\begin{align*}
		0<{}&1-\left( b_1+\frac{h_1}{2} \right)^2-
		\left( b_2+\frac{h_2}{2} \right)^2-b_3^2+2\left( b_1+\frac{h_1}{2} \right)\left( b_2+\frac{h_2}{2} \right)b_3\\
		={}& \mathrm{det}(G)-b_1h_1-b_2h_2+b_1h_2b_3-b_2h_1b_3-\frac{h_1^2}{4}-\frac{h_2^2}{4}+\frac{h_1h_2}{2} \\
		={}& \mathrm{det}(G)+\zeta q^t -\frac{(h_1-h_2)^2}{4}.
	\end{align*}
	
	Hence $\mathrm{det}(G)+\zeta q^t>\frac{(h_1-h_2)^2}{4}\ge 0$ and we finish the proof for the claim.

	We go back to the proof of $\bar{\omega}^1\ge 0$.
	We can write down $\bar{\omega}^1$ explicitly as
	\[
		\bar{\omega}^1=\bar{\xi}^1+\frac{h_3(\bar{\xi}^2-\bar{\xi}^1b_3)+(\bar{b}_2-\bar{b}_1b_3)\bar{\xi}^3}{1-b_3^2}.
	\]

	Using the condition (\ref{eq_pf_w3}), we only need to show
	\[
		F:=(\mathrm{det}(G)+\zeta q^t)\left( \bar{\xi}^1+ \frac{h_3(\bar{\xi}^2-\bar{\xi}^1 b_3)}{1-b_3^2} \right)-\frac{(\bar{b}_2-\bar{b}_1b_3)(\zeta H_0 +(1-b_3^2)q)\bar{\eta}^t}{1-b_3^2}\ge 0.
	\]

	Since $F$ does not contain $\bar{\xi}^3$ term, we can write $F=F_1\bar{\xi}^1+F_2\bar{\xi}^2$ where $F_i$ is the coefficient of $\bar{\xi}^i$ for $i=1,2$.

	For $F_1$, we have
	\begin{align*}
		F_1={}&\mathrm{det}(G)+\zeta q^t-h_2^2+h_1h_2b_3+h_2A_{2}\\
			  &-\frac{(\mathrm{det}(G)+\zeta q^t)h_3b_3+(h_2-h_1b_3-A_{2})h_3A_{1}}{1-b_3^2}.
	\end{align*}

	To simplify $F_1$, we can verify
	\[
		(\mathrm{det}(G)+\zeta q^t)h_3b_3+(h_2-h_1b_3-A_{2})h_3A_{1}=(1-b_3^2)(-A_{3}h_3-b_1h_2h_3).
	\]
	after a quite long calculation.

	Hence, we find
	\begin{align*}
		F_1={}&\mathrm{det}(G)+A_{1}h_1+2A_{2}h_2+A_{3}h_3+b_3h_1h_2+b_1h_2h_3-h_2^2\\
		={}& \mathrm{det}\left(\frac{\hat{G}+\bar{G}}{2}\right)+\frac{h_1^2+h_3^2}{4}-\frac{b_2h_1h_3}{2}-\frac{h_1h_2h_3}{2}\ge \mathrm{det}\left(\frac{\hat{G}+\bar{G}}{2}\right)
	\end{align*}
	by Cauchy-Schwarz inequality and $\left|h_2+b_2\right|=\left|\bar{b}_2\right|<1$. Here, $\hat{G}$ is given by
	\[
		\hat{G}:=
		\begin{bmatrix}
			1 & b_3 & \bar{b}_2\\
			b_3 & 1 & b_1 \\
			\bar{b}_2 & b_1 & 1 \\
		\end{bmatrix}.
	\]

	Now, we need to show $\hat{G}$ is positive-definite.
	This is easy to see by noting $\hat{\beta}:=(\beta_1,\bar{\beta}_2,\beta_3)$ satisfying spherical triangle inequalities since $\beta$ and $\bar{\beta}$ satisfy spherical triangle inequalities, $\bar{\beta}_1+\bar{\beta}_2>\beta_3$, and $\bar{\beta}_i\le \beta_i$.
	Therefore, we know $\hat{G}$, and hence $\frac{\hat{G}+\bar{G}}{2}$ are all positive-definite.
	This implies $\mathrm{det}(\frac{\hat{G}+\bar{G}}{2})>0$. Hence, $F_1>0$.

	For $F_2$, we have
	\begin{align*}
		F_2={}&\frac{(\mathrm{det}(G)+\zeta q^t)h_3+(A_{2}+h_1b_3-h_2)h_3A_{2}}{1-b_3^2}-(\bar{b}_2- \bar{b}_1b_3)h_1\\
		={}& (1-h_1b_1-b_1^2)-(\bar{b}_2- \bar{b}_1b_3)h_1=1-b_1 \bar{b}_1-(\bar{b}_2- \bar{b}_1b_3)h_1> 0.
	\end{align*}
	Here, we have used $b_1\bar{b}_1<1$ and $(\bar{b}_2- \bar{b}_1b_3)<0, h_1\ge 0$.

	In summary, we have $F\ge 0$ and hence $\bar{\omega}^1\ge 0$. So we finish the proof for Proposition \ref{prop_est_pri}.
	
	In the previous proof, we have assumed $\boldsymbol{C}$ has strictly less dihedral angles  than $\bar{\boldsymbol{C}}$.
	We know the inequality (\ref{eq_energy_est}) could be strict.
	If $\boldsymbol{C}$ have the same dihedral angles with $\bar{\boldsymbol{C}}$, then we know $\boldsymbol{C}$ is isometric to $\bar{\boldsymbol{C}}$. So we can choose a suitable $\xi$ with $E_{\bar{P}}(P_{\xi,\boldsymbol{C}})=0$.
\end{proof}

\begin{remark}
	Note that in general, if $\boldsymbol{C}$ and $\bar{\boldsymbol{C}}$ have same dihedral angles, we cannot conclude $\boldsymbol{C}$ is isometric to $\bar{\boldsymbol{C}}$.
 See Section \ref{sub:some_examples}.
\end{remark}

\bibliographystyle{alpha}
\bibliography{references}

\end{document}